\theoremstyle{plain}
\newtheorem{lemma}[subsection]{Lemma}
\newtheorem{corollary}[subsection]{Corollary}
\theoremstyle{definition}
\newtheorem{definition}[subsection]{Definition}
\newtheorem{notation}[subsection]{Notation}
\newtheorem{comment}[subsection]{Comment}
\theoremstyle{remark}
\newtheorem{remark}[subsection]{Remark}
\numberwithin{equation}{subsection}
\numberwithin{equation}{subsection}
\title{Polygons in Quadratically Closed Rings and Properties of $n$-adically Closed Rings}
\author{Rankeya Datta}
\email{Rankeya Datta rankeya@umich.edu}
\begin{document}
\maketitle

\section{Introduction}
\label{introduction}
\noindent
This paper is inspired by \cite{Art}. In \cite{Art}, Michael Artin proves using an argument involving \'etale algebras, that in an absolutely integrally closed ring (definition \ref{def-absolutely-integrally-closed-ring}), the sum of two prime ideals is either prime or the whole ring. The method of proof, however, is not elementary, and a simpler proof is given in  \cite{Hoch-Hun} for the larger class of quadratically closed rings (definition \ref{def-quadratically-closed-ring}). We have shown Hochster-Huneke's argument in \ref{irreducible-intersection-property-quadratically-closed-rings}. Partly motivated by their elementary proof, one of the main goals of this paper was to find a more elementary proof of [\cite{Art}, 1.7(ii)]. The statement of [\cite{Art}, 1.7(ii)] involves the notion of a polygon in a ring (definition \ref{def-polygons}). It states that an absolutely integrally closed ring has no polygons, and the proof in \cite{Art} again uses \'etale algebras. We find a proof of [\cite{Art}, 1.7(ii)] for quadratically closed rings (see \ref{no-polygons-in-quadratically-closed}, \ref{comment}, and \ref{special-case-polygons-quadratically-closed}) without using \'etale ring maps, and since absolutely integrally closed rings are quadratically closed, the result for absolutely integrally closed rings follows. Once we introduce the notion of a $n$-adically closed ring \ref{def-n-adically-closed}, the proof of [\cite{Art}, 1.7(ii)] is then easily generalized to $2n$-adically closed rings \ref{polygons-2n-adically-closed-rings}. We give below a more detailed account of what each section contains.\\

\noindent
Section \ref{introduction} contains the introduction, and section \ref{conventions} establishes the conventions followed in the rest of the paper. In sections \ref{absolute integral closure}, \ref{absolute integral closure and quotients}, and \ref{absolute integral closure and localization}, we introduce the notion of absolutely integrally closed domains and rings, and state and prove a number of their properties, both ring theoretic and scheme theoretic.  In section \ref{Quadratically Closed Rings and $n$-Adically Closed Rings}, we introduce the notion of quadratically closed rings, and more generally of $n$-adically closed rings, and prove ring and scheme theoretic properties related to these two notions. In most of the lemmas concerning scheme theoretic properties, we assume that the scheme is integral.
We extend \ref{irreducible-intersection-property-quadratically-closed-rings} to $2n$-adically closed rings (see \ref{irreducible-intersection-property-of-2n-adically-closed-rings} for this result). The proof of \ref{irreducible-intersection-property-of-2n-adically-closed-rings} uses the fact that in $n$-adically closed rings, the sum of two prime ideals is always a radical ideal, which we prove in \ref{sum-of-prime-deals-is-radical}. It seems difficult to use the technique of proof employed in \ref{irreducible-intersection-property-of-2n-adically-closed-rings} to prove an analogous statement for $n$-adically closed rings. However, with an additional hypothesis, we are able to prove an analogue of \ref{irreducible-intersection-property-of-2n-adically-closed-rings} for $n$-adically closed rings satisfying this additional hypothesis \ref{irreducible-intersection-property-of-n-adically-closed-rings-with-additional-property}. It is worth observing that in rings satisfying just this additional hypothesis, the sum of two prime ideals is always primary \ref{conditionally-n-adically-closed}. We also give a simpler proof of [\cite{Art}, 1.8] (see \ref{intersections-of-primes}, and \ref{joins-of-2n-adically-closed-rings}), and find that \ref{joins-of-2n-adically-closed-rings} holds for rings which satisfy, what we have termed, the irreducible intersection property (definition \ref{irreducible-intersection-property}). The proof of [\cite{Art}, 1.8] depends on the proof of [\cite{Art}, 1.5(i)]. However, in the context of $2n$-adically closed rings, and indeed, rings satisfying the irreducible intersection property \ref{irreducible-intersection-property}, the proof of [\cite{Art}, 1.5(i)] is less technical (\ref{intersections-of-primes} and \ref{joins-and-iip}), and one can also drop the semi-local hypothesis made in [\cite{Art}, 1.5(i)], if one assumes that $Spec(R)$ is irreducible. In section \ref{irreducible intersection property}, we introduce the notion of the irreducible intersection property, hoping to generalize a few results of section \ref{Quadratically Closed Rings and $n$-Adically Closed Rings}. In \ref{necessary-sufficient-condition-for-iip}, we prove a necessary and sufficient condition for a ring to have the irreducible intersection property. In sections \ref{Quadratically Closed Rings and $n$-Adically Closed Rings} and \ref{irreducible intersection property} we also show that the property of being $n$-adically closed, and the irreducible intersection property, are preserved by taking colimits of directed systems of rings (all having the corresponding property) and ring homomorphisms. Section \ref{semi-local-rings} deals with the process of semi-localization of a ring at prime ideals, and is mainly preparatory work for section \ref{Polygons in Quadratically closed rings}. In section \ref{Polygons in Quadratically closed rings} we introduce the notion of polygons in rings, and give a detailed, but elementary proof of the non-existence of polygons in quadratically closed rings (\ref{no-polygons-in-quadratically-closed}, \ref{comment}, \ref{special-case-polygons-quadratically-closed}), and in general that $2n$-adically closed rings have no polygons \ref{polygons-2n-adically-closed-rings}. Finally, \ref{ack} is the acknowledgements section. 

\section{Conventions}
\label{conventions}
\noindent
In this paper, all rings are assumed to be commutative, with a multiplicatve identity, and a ring homomorphism maps the multiplicative identity of one ring to the multiplicative identity of the other ring. If $\alpha$ is an ideal of a ring $A$, then we denote its radical as $\sqrt{\alpha}$ or $rad(\alpha)$. Given an element $a$ in a ring $A$, if $I \subset A$ is an ideal, then the image of $a$ under the canonical projection $A \rightarrow A/I$ is usually denoted as $\overline{a}$. We denote the nilradical of a ring $A$ by $Nil(A)$. Given a domain $A$, we denote its field of fractions by $Frac(A)$. We also use $\mathbb{N}, \mathbb{Z}, \mathbb{Q}, \mathbb{R}, \mathbb{C}$ to denote the set of natural numbers, integers, rational numbers, real numbers and complex numbers respectively. We do not include $0$ in $\mathbb{N}$.  Given a ring homomorphism $\varphi: A \rightarrow B$, for a prime ideal $q \subset B$, we often denote the contraction $\varphi^{-1}(q)$ as $q \cap A$. We mostly cite Artin's paper \cite{Art}, and when the citation is given followed by a number such [\cite{Art}, 1.7(ii)], it refers to the second part of Definition/ Proposition/ Corollary/ Remark 1.7. Occasionally, we also cite the book \cite{Liu}. As an example, in section \ref{Quadratically Closed Rings and $n$-Adically Closed Rings}, lemma \ref{$n$-adically-closed-normal-schemes}, we cite [\cite{Liu}, 2.4.18(a)]. This stands for chapter 2, proposition 4.18(a) in \cite{Liu}. Whenever we cite something from \cite{stacks-project}, we give the appropriate tag number.

\section{Absolute Integral Closure}
\label{absolute integral closure}

\begin{definition} 
\label{def-absolute-integrally-closed}
An integral domain $A$ is \textbf{absolutely integrally closed} if $A$ is integrally closed in its field of fractions $K$, and $K$ is an algebraically closed field.
\end{definition}

\noindent
It is clear from the definition above that any monic polynomial in $A[x]$ splits into linear factors over $A$. The converse of this statement is also true, and will be proved shortly.

\begin{lemma} 
\label{necessary-sufficient-for-absolutely-integrally-closed}
Let $A$ be an domain. Then $A$ is absolutely integrally closed if and only if  $A$ does not have a proper integral extension which is a domain.
\end{lemma}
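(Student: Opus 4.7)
The plan is to prove both directions directly from the definitions, using the standard trick of clearing leading coefficients to pass from an algebraic element to an integral one.

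For the $(\Rightarrow)$ direction, I assume $A$ is absolutely integrally closed and suppose for contradiction that $A \subsetneq B$ is a proper integral extension with $B$ a domain. Pick any $b \in B \setminus A$. Since $B$ is a domain, $\mathrm{Frac}(B)$ contains $K = \mathrm{Frac}(A)$, and $b$, being integral over $A$, is algebraic over $K$. Because $K$ is algebraically closed, $b \in K$; because $A$ is integrally closed in $K$ and $b$ is integral over $A$, $b \in A$, a contradiction.

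For the $(\Leftarrow)$ direction, I first show $A$ is integrally closed in $K$: if $\alpha \in K$ is integral over $A$, then $A \subseteq A[\alpha] \subseteq K$ is an integral extension to a domain, hence by hypothesis $A[\alpha] = A$, so $\alpha \in A$. Next I show $K$ is algebraically closed. Given $\alpha$ algebraic over $K$ (say in some fixed algebraic closure of $K$), write a relation $a_n \alpha^n + a_{n-1}\alpha^{n-1} + \cdots + a_0 = 0$ with $a_i \in A$ and $a_n \neq 0$. Setting $\beta := a_n \alpha$ and multiplying the relation by $a_n^{n-1}$ yields a monic relation
\[
\beta^n + a_{n-1}\beta^{n-1} + a_{n-2}a_n \beta^{n-2} + \cdots + a_0 a_n^{n-1} = 0,
\]
so $\beta$ is integral over $A$. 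Then $A \subseteq A[\beta]$ is an integral extension, and $A[\beta]$ sits inside the field $K(\alpha)$, hence is a domain. By hypothesis $A[\beta] = A$, so $\beta \in A$, and therefore $\alpha = \beta / a_n \in K$. Thus every element algebraic over $K$ lies in $K$, i.e., $K$ is algebraically closed.

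The only genuinely delicate point is the $(\Leftarrow)$ direction for showing $K$ is algebraically closed: the hypothesis is phrased in terms of integral extensions of $A$, but algebraicity over $K$ is a priori weaker than integrality over $A$. The clearing-denominators trick $\beta = a_n \alpha$ is the bridge, and after that the argument is routine. No result beyond the definitions in the excerpt is required.
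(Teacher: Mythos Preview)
Your proof is correct and follows essentially the same approach as the paper. The only packaging difference is in $(\Leftarrow)$: the paper takes the integral closure $\overline{A}$ of $A$ in an algebraic closure $\overline{K}$ and argues that $A=\overline{A}$ forces $K=\mathrm{Frac}(\overline{A})=\overline{K}$, whereas you unpack this element-by-element via the clearing-denominators trick $\beta=a_n\alpha$---but that trick is exactly what underlies the paper's claim $\mathrm{Frac}(\overline{A})=\overline{K}$ (proved there as a separate lemma), so the content is the same.
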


\begin{proof}
The proof of $\Rightarrow$ is clear from \ref{def-absolute-integrally-closed}. For the proof of $\Leftarrow$, let $K = Frac(A)$, and $\overline{K}$ the algebraic closure of $K$. Let $\overline{A}$ be the integral closure of $A$ in $\overline{K}$. Then by hypothesis, $A = \overline{A}$. Since $\overline{K} = Frac(\overline{A})$, it follows that $\overline{K} = K$. We also know that $\overline{A}$ is integrally closed in $\overline{K}$, because any element of $\overline{K}$ which is integral over $\overline{A}$ is also integral over $A$. Then $A = \overline{A}$ and $K = \overline{K}$ implies that $A$ is integrally closed in $K = \overline{K}$, completing the proof. \end{proof}

\begin{lemma} 
\label{absolutely-integrally-closed-splitting-monic-polynomial}
A domain $A$ is absolutely integrally closed if and only if any monic polynomial in $A[x]$ splits into linear factors in $A[x]$.
\end{lemma}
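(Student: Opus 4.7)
The plan is to handle the two directions separately, with the forward direction a routine unpacking of definitions and the reverse direction reduced to Lemma \ref{necessary-sufficient-for-absolutely-integrally-closed}.

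For $(\Rightarrow)$, suppose $A$ is absolutely integrally closed and let $f(x) \in A[x]$ be monic. Since $K = \mathrm{Frac}(A)$ is algebraically closed, $f$ factors over $K$ as $\prod_{i=1}^n (x - \alpha_i)$ with each $\alpha_i \in K$. Each root $\alpha_i$ is integral over $A$ because $f$ is monic with coefficients in $A$, and since $A$ is integrally closed in $K$, we conclude $\alpha_i \in A$ for all $i$. Thus the factorization takes place in $A[x]$.

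For $(\Leftarrow)$, the cleanest route is to invoke Lemma \ref{necessary-sufficient-for-absolutely-integrally-closed} and show that $A$ admits no proper integral extension which is a domain. Suppose for contradiction that $A \subsetneq B$ is such an extension, and pick $b \in B \setminus A$. Since $b$ is integral over $A$, there is a monic polynomial $f(x) \in A[x]$ with $f(b) = 0$. By hypothesis, $f(x) = \prod_{i=1}^n (x - a_i)$ for some $a_i \in A$. Evaluating at $b$ inside the domain $B$ gives $\prod_{i=1}^n (b - a_i) = 0$, so $b = a_i \in A$ for some $i$, contradicting $b \notin A$. Hence no such $B$ exists, and Lemma \ref{necessary-sufficient-for-absolutely-integrally-closed} finishes the proof.

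There is no real obstacle here; the only subtle point is to remember that the reverse direction should be funneled through the previous lemma rather than argued directly (which would require separately verifying that $\mathrm{Frac}(A)$ is algebraically closed by clearing denominators to reduce to a monic polynomial over $A$). Using Lemma \ref{necessary-sufficient-for-absolutely-integrally-closed} avoids that extra step entirely.
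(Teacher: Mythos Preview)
Your proof is correct and follows essentially the same approach as the paper: the forward direction is unpacked from the definition (the paper just says it is ``clear''), and the reverse direction is reduced to Lemma~\ref{necessary-sufficient-for-absolutely-integrally-closed} exactly as the paper does. Your argument simply supplies the details behind the paper's phrase ``$A$ clearly has no proper integral extension which is a domain.''
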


\begin{proof}
$\Rightarrow$ is clear from the definition of an absolutely integrally closed domain. For $\Leftarrow$, note that if any monic polynomial in $A[x]$ splits into linear factors in $A[x]$, then $A$ clearly has no proper integral extension which is a domain. Then the result follows by \ref{necessary-sufficient-for-absolutely-integrally-closed}
. 
\end{proof}

\noindent
During the course of the proof of \ref{necessary-sufficient-for-absolutely-integrally-closed} we have implicitly stated how to obtain absolutely integrally closed domains from any given domain. This is made explicit in the next lemma.

\begin{lemma} 
\label{create-absolutely-integrally-closed-from-any-domain}
Let $A$ be a domain, $K$ its fraction field, $\overline{K}$ the algebraic closure of $K$. If $\overline{A}$ denotes the integral closure of $A$ in $\overline{K}$, then $\overline{A}$ is an absolutely integrally closed domain.
\end{lemma}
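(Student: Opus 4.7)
The plan is to verify the two conditions in Definition \ref{def-absolute-integrally-closed} for $\overline{A}$: namely, that $\overline{A}$ is integrally closed in its fraction field, and that this fraction field is algebraically closed. Everything is already happening inside the algebraically closed field $\overline{K}$, so the main task is to identify $\mathrm{Frac}(\overline{A})$ with $\overline{K}$; once that is done, integrally closed-ness is a standard consequence of the transitivity of integral dependence, essentially the argument already used in the proof of \ref{necessary-sufficient-for-absolutely-integrally-closed}.

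First I would note that $\overline{A} \subseteq \overline{K}$ is a subring of a field, hence a domain, and that $\mathrm{Frac}(\overline{A}) \subseteq \overline{K}$ automatically. The nontrivial inclusion is $\overline{K} \subseteq \mathrm{Frac}(\overline{A})$: given $\alpha \in \overline{K}$, the element $\alpha$ is algebraic over $K$, so it satisfies a relation $a_n \alpha^n + a_{n-1} \alpha^{n-1} + \cdots + a_0 = 0$ with $a_i \in A$ and $a_n \neq 0$ (clear denominators in the minimal polynomial over $K$). Multiplying through by $a_n^{n-1}$ shows that $a_n \alpha$ is integral over $A$, hence $a_n \alpha \in \overline{A}$. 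Since $a_n \in A \subseteq \overline{A}$ is nonzero, we get $\alpha = (a_n \alpha)/a_n \in \mathrm{Frac}(\overline{A})$, yielding $\mathrm{Frac}(\overline{A}) = \overline{K}$.

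Next, I would show that $\overline{A}$ is integrally closed in $\overline{K} = \mathrm{Frac}(\overline{A})$. If $\beta \in \overline{K}$ is integral over $\overline{A}$, then $\beta$ is integral over a finitely generated $A$-subalgebra of $\overline{A}$, hence integral over $A$ by transitivity of integral dependence; therefore $\beta \in \overline{A}$.

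Combining the two steps, $\overline{A}$ is integrally closed in the algebraically closed field $\overline{K} = \mathrm{Frac}(\overline{A})$, so $\overline{A}$ is absolutely integrally closed. I don't anticipate any serious obstacle here; the only slightly subtle point is the denominator-clearing trick to produce an integral multiple $a_n \alpha$ of an arbitrary element of $\overline{K}$, which is the standard mechanism for showing that the fraction field of an integral closure in an algebraic extension recovers the whole extension.
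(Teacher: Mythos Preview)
Your proof is correct and follows essentially the same approach as the paper: both identify $\mathrm{Frac}(\overline{A})$ with $\overline{K}$ via the clearing-denominators trick, and both use transitivity of integral dependence for integral closedness (the paper simply declares the latter ``clear''). Your version is slightly more explicit but otherwise identical in strategy.
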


\begin{proof}
It is clear that $\overline{A}$ is integrally closed in $\overline{K}$. It suffices to show that $Frac(\overline{A}) = \overline{K}$. Let $x \in \overline{K}$. Then there exists a monic polynomial $p(T) \in K[T]$ such that $p(x) = 0$. By clearing denominators, we get that there exists $a \in A - \{0\}$ such that $ax$ is integral over $A$. Thus, $ax \in \overline{A}$, and $x = ax/a \in Frac(\overline{A})$.
\end{proof}

\begin{lemma} 
\label{when-subring-is-absolutely-integrally-closed}
Let $A$ be an absolutely integrally closed domain. If $B \subset A$ is a subring of $A$ which is integrally closed in $A$, then $B$ is an absolutely integrally closed domain.
\end{lemma}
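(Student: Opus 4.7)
The plan is to invoke Lemma \ref{absolutely-integrally-closed-splitting-monic-polynomial}, which reduces the claim to showing that every monic polynomial in $B[x]$ splits into linear factors over $B$. First, note that $B$ is automatically a domain, since it is a subring of the domain $A$. Thus the only thing to check is the splitting property.

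Given a monic polynomial $f(x) \in B[x]$, I would regard $f(x)$ as an element of $A[x]$ via the inclusion $B \subset A$. Since $A$ is absolutely integrally closed, Lemma \ref{absolutely-integrally-closed-splitting-monic-polynomial} gives a factorization
\[
f(x) = \prod_{i=1}^{n} (x - a_i)
\]
in $A[x]$, where each $a_i \in A$. Each root $a_i$ satisfies the monic equation $f(a_i) = 0$ with coefficients in $B$, so $a_i$ is integral over $B$. Because $B$ is integrally closed in $A$ by hypothesis, each $a_i$ lies in $B$, and therefore the factorization already takes place in $B[x]$.

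This exhibits the splitting of an arbitrary monic polynomial in $B[x]$, so by Lemma \ref{absolutely-integrally-closed-splitting-monic-polynomial} applied to $B$, the subring $B$ is absolutely integrally closed. I do not anticipate any real obstacle here: the argument is a direct transfer of the splitting property from $A$ to $B$ using the integral closure hypothesis, and both the domain condition and the equality $\mathrm{Frac}(B) \subset \mathrm{Frac}(A)$ play no explicit role beyond ensuring that the root-extraction step makes sense inside $A$.
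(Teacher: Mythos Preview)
Your proposal is correct and matches the paper's proof essentially line for line: view $f$ in $A[x]$, split it there, observe the roots are integral over $B$, pull them back into $B$ by the integral closure hypothesis, and conclude via Lemma \ref{absolutely-integrally-closed-splitting-monic-polynomial}. The only addition you make is the explicit remark that $B$ is a domain as a subring of $A$, which the paper leaves implicit.
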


\begin{proof}
Let $f(x) \in B[x]$ be a monic polynomial of degree $n$. Since $f(x) \in A[x]$, it follows that in $A[x]$, $f(x) = (x - \alpha_0)...(x - \alpha_n)$, for $\alpha_i \in A$. Then each $\alpha_i$ is integral over $B$. By hypothesis $B$ is integrally closed in $A$, so that the $\alpha_i$ must be in $B$. We are then done by \ref{absolutely-integrally-closed-splitting-monic-polynomial}. 
\end{proof}

\begin{lemma} 
\label{finite-product-absolutely-integrally-closed}
Let $A_1, ..., A_n$ be absolutely integrally closed domains. Then any monic polynomial in  $A_1 \times ... \times A_n[X]$ splits into linear factors over $A_1 \times .... \times A_n$.
\end{lemma}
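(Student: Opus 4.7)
The plan is to exploit the canonical ring isomorphism
\[
(A_1 \times \cdots \times A_n)[X] \;\cong\; A_1[X] \times \cdots \times A_n[X],
\]
which sends a polynomial $f = \sum_k (a_{1k}, \ldots, a_{nk}) X^k$ to the tuple $(f_1, \ldots, f_n)$ with $f_i = \sum_k a_{ik} X^k$. A monic polynomial $f$ of degree $d$ corresponds to a tuple $(f_1, \ldots, f_n)$ in which \emph{every} component $f_i$ is monic of degree exactly $d$, because the leading coefficient $(1, \ldots, 1)$ of $f$ forces each $a_{id}=1$.

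Next I would apply Lemma \ref{absolutely-integrally-closed-splitting-monic-polynomial} to each component separately: since each $A_i$ is absolutely integrally closed, each $f_i$ factors as
\[
f_i = (X - \alpha_{i,1})(X - \alpha_{i,2}) \cdots (X - \alpha_{i,d})
\]
with $\alpha_{i,j} \in A_i$. The key point is that the number of factors is the same across all components, namely $d$; this is precisely what the monic hypothesis guarantees.

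Now I would assemble the roots componentwise: for $j = 1, \ldots, d$, set
\[
\alpha_j := (\alpha_{1,j}, \alpha_{2,j}, \ldots, \alpha_{n,j}) \in A_1 \times \cdots \times A_n.
\]
Under the isomorphism above, the linear polynomial $X - \alpha_j$ in the product ring corresponds to the tuple $(X - \alpha_{1,j}, \ldots, X - \alpha_{n,j})$. Multiplication in a product ring is componentwise, so
\[
\prod_{j=1}^{d}(X - \alpha_j) \;\longleftrightarrow\; \Bigl(\prod_{j=1}^d (X - \alpha_{1,j}),\; \ldots,\; \prod_{j=1}^d (X - \alpha_{n,j})\Bigr) = (f_1, \ldots, f_n),
\]
which is exactly $f$. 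Hence $f$ splits into linear factors over $A_1 \times \cdots \times A_n$.

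There is no real obstacle here: the argument is entirely formal once one observes that a monic polynomial over a product ring has monic components \emph{of the same degree}, which lets the factorizations produced by Lemma \ref{absolutely-integrally-closed-splitting-monic-polynomial} be paired up into single linear factors over the product. The mild subtlety worth flagging (but not dwelling on) is that the pairing of roots across the factors $A_i$ is arbitrary, so the factorization is highly non-unique — but the statement only asserts existence.
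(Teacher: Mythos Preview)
Your proof is correct and follows essentially the same approach as the paper: split the monic polynomial into its componentwise projections, factor each over $A_i$ using Lemma~\ref{absolutely-integrally-closed-splitting-monic-polynomial}, and reassemble the roots into tuples. The only cosmetic difference is that the paper reduces by induction to the case $n=2$, whereas you handle general $n$ directly; the underlying idea is identical.
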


\begin{proof}
By induction on $n$, it suffices to prove this for $n = 2$. So, let $f(X) = X^n + (a_{n-1},b_{n-1})X^{n-1} + ... + (a_0,b_0) \in A_1 \times A_2[X]$, be a monic polynomial (where $a_i \in A_1$ and $b_j \in A_2$). Since $A_1$ is integrally closed, we have $X^n + a_{n-1}X^{n-1} + ... + a_0 = (X-\alpha_0)...(X- \alpha_n)$, for $\alpha_i \in A_1$. Similarly, $X^n + b_{n-1}X^{n-1} + ... +b_0 = (X- \beta_0)...(X- \beta_n)$, for $\beta_i \in A_2$. It is then easily seen that $f(X) = (X - (\alpha_0, \beta_0))...(X- (\alpha_n, \beta_n))$, and so we are done. 
\end{proof}

\noindent
Motivated by the previous lemma, we generalize the definition of absolutely integrally closed domains to a larger class of rings.

\begin{definition} 
\label{def-absolutely-integrally-closed-ring}
A ring $R$ is \textbf{absolutely integrally closed} if $R \cong A_1 \times ... \times A_n$, where the $A_i$ are absolutely integrally closed domains.
\end{definition}

\begin{remark} 
\label{rem-observation-absolutely-integrally-closed}
In particular, any absolutely integrally closed domain is an absolutely integrally closed ring. Moreover, note that any absolutely integrally closed ring is reduced, being a finite product of reduced rings.
\end{remark}

\section{Absolute Integral Closure And Quotients}
\label{absolute integral closure and quotients}

\begin{lemma} 
\label{quotient-absolutely-integrally-closed-domain-by-prime}
Let $A$ be an absolutely integrally closed domain, and $p \subset A$ a prime ideal. Then $A/p$ is an absolutely integrally closed domain.
\end{lemma}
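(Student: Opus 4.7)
The plan is to use the criterion established in Lemma \ref{absolutely-integrally-closed-splitting-monic-polynomial}, which reduces the problem to checking that every monic polynomial in $(A/p)[x]$ splits into linear factors. Since $p$ is prime, $A/p$ is automatically a domain, so this splitting criterion is all that remains.

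To that end, I would take an arbitrary monic polynomial $\overline{f}(x) \in (A/p)[x]$ of degree $n$, and lift it coefficient by coefficient to a monic polynomial $f(x) \in A[x]$ of the same degree (choosing preimages of each non-leading coefficient under $A \to A/p$, and keeping the leading coefficient equal to $1$). By \ref{absolutely-integrally-closed-splitting-monic-polynomial} applied to the absolutely integrally closed domain $A$, we obtain a factorization
\[
f(x) = (x - \alpha_1)(x - \alpha_2)\cdots(x - \alpha_n)
\]
with $\alpha_i \in A$. Reducing this equation modulo $p$ gives
\[
\overline{f}(x) = (x - \overline{\alpha_1})(x - \overline{\alpha_2})\cdots(x - \overline{\alpha_n})
\]
in $(A/p)[x]$, which is a splitting into linear factors. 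Invoking \ref{absolutely-integrally-closed-splitting-monic-polynomial} in the opposite direction then shows that $A/p$ is absolutely integrally closed.

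There is no real obstacle here: the entire argument is just the observation that the splitting of monic polynomials is preserved under surjective ring maps, combined with the characterization of absolute integral closedness in terms of splitting. The only mild care needed is to make sure the lift $f(x)$ really is monic of the same degree as $\overline{f}(x)$, which is transparent since the leading coefficient $1 \in A/p$ lifts to $1 \in A$.
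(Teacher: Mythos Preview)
Your proof is correct and follows exactly the same approach as the paper: lift a monic polynomial from $(A/p)[x]$ to $A[x]$, factor it there into linear factors using Lemma~\ref{absolutely-integrally-closed-splitting-monic-polynomial}, reduce modulo $p$, and apply the same lemma in reverse. The only cosmetic difference is that you explicitly invoke Lemma~\ref{absolutely-integrally-closed-splitting-monic-polynomial} for the factorization in $A[x]$, whereas the paper appeals directly to the definition.
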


\begin{proof}
We will denote the image of $a \in A$ under the natural projection map $A \rightarrow A/p$ as $\overline{a}$. Let $f(x) = x^n + \overline{a_{n-1}}x^{n-1} + ... + \overline{a_0} \in A/p[x]$ be a monic polynomial. Since $A$ is integrally closed, $x^n + {a_{n-1}}x^{n-1} + ... +{a_0} = (x - \alpha_0)...(x - \alpha_n)$ for some $\alpha_i \in A$. Then $f(x) = (x - \overline{\alpha_0})...(x - \overline{\alpha_n})$. We win by \ref{absolutely-integrally-closed-splitting-monic-polynomial}. 
\end{proof}

\begin{lemma} 
\label{quotient-absolutely-integrally-closed-ring-by-prime}
Let $R$ be an absolutely integrally closed ring. If $p \subset R$ is a prime ideal, then $R/p$ is an absolutely integrally closed domain.
\end{lemma}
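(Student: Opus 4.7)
The plan is to reduce this to the previous lemma \ref{quotient-absolutely-integrally-closed-domain-by-prime} by exploiting the structure of prime ideals in a finite product of rings. Since $R$ is absolutely integrally closed, definition \ref{def-absolutely-integrally-closed-ring} gives an isomorphism $R \cong A_1 \times \dots \times A_n$ where each $A_i$ is an absolutely integrally closed domain. The key ring-theoretic fact I would invoke is the standard description of prime ideals in a finite direct product: every prime $p \subset A_1 \times \dots \times A_n$ has the form $A_1 \times \dots \times A_{i-1} \times p_i \times A_{i+1} \times \dots \times A_n$ for a unique index $i$ and a unique prime $p_i \subset A_i$. This follows because the idempotents $e_j = (0,\dots,1,\dots,0)$ satisfy $e_i e_j = 0$ for $i \neq j$, so for any prime exactly one $e_i$ lies outside $p$, and then for $j \neq i$ the whole factor $A_j$ must be contained in $p$.

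Granted this, I would compute the quotient explicitly: the projection $R \twoheadrightarrow A_i \twoheadrightarrow A_i/p_i$ has kernel equal to $p$, so there is an induced isomorphism $R/p \cong A_i/p_i$. Since $A_i$ is an absolutely integrally closed domain and $p_i$ is a prime of $A_i$, lemma \ref{quotient-absolutely-integrally-closed-domain-by-prime} tells us that $A_i/p_i$ is an absolutely integrally closed domain. Transporting along the isomorphism, $R/p$ is an absolutely integrally closed domain, which is what we want.

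There is essentially no obstacle here; the only point that requires any care is the description of primes in a finite product, which is a routine argument with orthogonal idempotents. Everything else is just bookkeeping and an appeal to the previous lemma. I would not bother proving the prime-ideals-of-a-product fact in detail, but simply cite it (e.g.\ as a standard fact, or from \cite{stacks-project}) and move on.
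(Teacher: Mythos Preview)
Your proposal is correct and matches the paper's proof almost verbatim: the paper also writes $R = A_1 \times \cdots \times A_n$, invokes the standard description of primes in a finite product to get $p = A_1 \times \cdots \times p_i \times \cdots \times A_n$, observes $R/p \cong A_i/p_i$, and then applies Lemma~\ref{quotient-absolutely-integrally-closed-domain-by-prime}. The only difference is that you sketch the idempotent argument for the shape of primes, whereas the paper simply asserts it.
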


\begin{proof} 
Without loss of generality, let $R = A_1 \times ... \times A_n$, where the $A_i$ are absolutely integrally closed domains. Then a prime $p$ is of the form $A_1 \times A_2 \times ... \times p_i \times... \times A_n$, where $p_i \in Spec(A_i)$ for some $i$. Then $R/p \cong A_i/p_i$, and the latter ring is an absolutely integrally closed domain by \ref{quotient-absolutely-integrally-closed-domain-by-prime}.
\end{proof}

\begin{lemma} 
\label{image-absolutely-integrally-closed-ring}
Let $R$ be an absolutely integrally closed ring, $D$ a domain, and $\varphi: R \rightarrow D$ a ring map. Then $im(\varphi)$ is an absolutely integrally closed domain.
\end{lemma}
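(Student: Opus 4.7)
The plan is to reduce the statement directly to the previous lemma \ref{quotient-absolutely-integrally-closed-ring-by-prime} via the first isomorphism theorem. First I would observe that $\mathrm{im}(\varphi) \cong R/\ker(\varphi)$ as rings. Since $\mathrm{im}(\varphi)$ sits as a subring of the domain $D$, it has no zero divisors, so $\mathrm{im}(\varphi)$ is itself a domain. This immediately forces $\ker(\varphi)$ to be a prime ideal of $R$.

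Having established that $\ker(\varphi)$ is prime, I would simply invoke \ref{quotient-absolutely-integrally-closed-ring-by-prime}, which tells us that for any prime $p \subset R$, the quotient $R/p$ is an absolutely integrally closed domain. Applying this with $p = \ker(\varphi)$ and transporting along the isomorphism $R/\ker(\varphi) \cong \mathrm{im}(\varphi)$ yields the conclusion.

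There is no real obstacle here; the lemma is essentially a corollary of \ref{quotient-absolutely-integrally-closed-ring-by-prime} dressed up in the language of ring homomorphisms rather than quotients. The only point worth being careful about is remembering that the definition \ref{def-absolutely-integrally-closed-ring} allows an absolutely integrally closed ring to be a nontrivial finite product, so the kernel of $\varphi$ need not be zero and one genuinely needs the quotient statement rather than an embedding argument.
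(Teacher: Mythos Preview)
Your proposal is correct and matches the paper's proof essentially verbatim: the paper also observes that $\ker(\varphi)$ is prime (since $D$ is a domain), invokes \ref{quotient-absolutely-integrally-closed-ring-by-prime}, and concludes via $R/\ker(\varphi)\cong \mathrm{im}(\varphi)$. Your closing remark about needing the quotient statement rather than an embedding argument is a nice clarification but not something the paper spells out.
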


\begin{proof}
Since $D$ is a domain, $(0)$ is a prime ideal. Thus, $ker(\varphi)$ is a prime ideal. Now $R/ker(\varphi)$ is an absolutely integrally closed domain by \ref{quotient-absolutely-integrally-closed-ring-by-prime}. Since $im(\varphi) \cong R/ker(\varphi)$, we are done.
\end{proof}

\begin{lemma} 
\label{integral-ring-map-absolutely-integrally-closed-ring-surjective}
Let $R$ be an absolutely integrally closed ring, $D$ a domain. If $\varphi: R \rightarrow D$ is an integral ring map, then $\varphi$ is surjective, and $D$ is absolutely integrally closed.
\end{lemma}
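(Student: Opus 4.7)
The plan is to bootstrap off the immediately preceding lemma \ref{image-absolutely-integrally-closed-ring} together with the characterization \ref{necessary-sufficient-for-absolutely-integrally-closed} of absolutely integrally closed domains as those domains admitting no proper integral domain extension.

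First I would invoke \ref{image-absolutely-integrally-closed-ring}: since $D$ is a domain and $\varphi : R \to D$ is a ring map, $\mathrm{im}(\varphi) \subset D$ is an absolutely integrally closed subdomain of $D$. Second, because $\varphi$ is integral, every element of $D$ is integral over $\mathrm{im}(\varphi)$, so $\mathrm{im}(\varphi) \subset D$ is an integral extension of domains. By \ref{necessary-sufficient-for-absolutely-integrally-closed} applied to the absolutely integrally closed domain $\mathrm{im}(\varphi)$, such an extension must be trivial, giving $\mathrm{im}(\varphi) = D$; this is the surjectivity claim. Finally, since $D = \mathrm{im}(\varphi)$ and $\mathrm{im}(\varphi)$ is absolutely integrally closed, $D$ itself is absolutely integrally closed, completing the proof.

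There is essentially no obstacle here: the work has been done in \ref{image-absolutely-integrally-closed-ring} and in the forbids-proper-integral-extension reformulation \ref{necessary-sufficient-for-absolutely-integrally-closed}. The only point to be a bit careful about is that \ref{necessary-sufficient-for-absolutely-integrally-closed} is stated for integral extensions which are domains, so one must observe at the outset that $D$ being a domain is exactly what makes both lemmas applicable.
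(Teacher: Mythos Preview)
Your proposal is correct and matches the paper's proof essentially line for line: invoke \ref{image-absolutely-integrally-closed-ring} to see that $\mathrm{im}(\varphi)$ is an absolutely integrally closed domain, use integrality of $\varphi$ to see $D$ is integral over $\mathrm{im}(\varphi)$, and apply \ref{necessary-sufficient-for-absolutely-integrally-closed} to conclude $\mathrm{im}(\varphi)=D$. Your additional remark that $D$ being a domain is what makes both cited lemmas applicable is a nice clarification the paper leaves implicit.
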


\begin{proof}
By \ref{image-absolutely-integrally-closed-ring}, $im(\varphi)$ is an absolutely integrally closed domain. Since $\varphi$ is integral, $D$ is integral over $im(\varphi)$. By \ref{necessary-sufficient-for-absolutely-integrally-closed}, $im(\varphi) = D$. This proves both the statements. \end{proof}

\begin{remark} 
\label{rem-finite-ring-maps-of-absolutely-integrally-closed-rings}
In particular, any injective, integral homomorphism of absolutely integrally closed domains is an isomorphism. Since finite ring maps are integral, \ref{integral-ring-map-absolutely-integrally-closed-ring-surjective} is also true for finite ring maps. Another consequence of the previous lemma is the following:
\end{remark}

\begin{lemma} 
\label{integral-morphisms-to-absolutely-integrally-closed-schemes-are-closed-immersions}
Let $Y$ be a scheme such that for all affine open $Spec(A)$ of $Y$, $A$ is an absolutely integrally closed ring. Let $f: X \rightarrow Y$ be an integral morphism of schemes such that $X$ is an integral scheme. Then $f$ is a closed immersion.
\end{lemma}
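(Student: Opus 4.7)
The plan is to reduce the statement to the affine setting and apply Lemma \ref{integral-ring-map-absolutely-integrally-closed-ring-surjective}. Being a closed immersion is local on the target, so it suffices to cover $Y$ by affine opens $V = \mathrm{Spec}(A)$ (with $A$ absolutely integrally closed by hypothesis) and show that $f|_{f^{-1}(V)} : f^{-1}(V) \to V$ is a closed immersion for each such $V$.

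Since $f$ is integral, it is in particular affine, so $f^{-1}(V) = \mathrm{Spec}(B)$ for some $A$-algebra $B$ which is integral over $A$. If $f^{-1}(V)$ is empty then $B = 0$ and the structure map $A \to 0$ is surjective, so the restriction is a closed immersion trivially. Otherwise, $f^{-1}(V)$ is a nonempty open subscheme of the integral scheme $X$; since irreducibility passes to nonempty opens and reducedness is a local property, $f^{-1}(V)$ is itself integral, whence $B$ is a domain. Now $A \to B$ is an integral ring map from an absolutely integrally closed ring to a domain, so Lemma \ref{integral-ring-map-absolutely-integrally-closed-ring-surjective} applies and forces $A \to B$ to be surjective. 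This means $f|_{f^{-1}(V)}$ is a closed immersion, and gluing over the affine cover finishes the argument.

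There is no serious obstacle: the only subtle points are verifying that $f^{-1}(V)$ is affine (which is immediate from $f$ being integral, hence affine) and that the open subscheme $f^{-1}(V)$ inherits integrality from $X$ when nonempty. The empty case needs a brief mention, but it is automatic once one observes that $A \to 0$ is a surjective ring map. All the real content is already packaged into Lemma \ref{integral-ring-map-absolutely-integrally-closed-ring-surjective}.
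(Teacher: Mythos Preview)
Your proof is correct and follows the same approach as the paper: reduce to affines using the characterization of closed immersions (affine morphism with surjective ring maps on affine opens), then apply Lemma~\ref{integral-ring-map-absolutely-integrally-closed-ring-surjective}. You spell out a few more details than the paper does (the empty preimage case, and why $B$ is a domain when $f^{-1}(V)$ is nonempty), but the argument is essentially identical.
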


\begin{proof}
This follows from the following characterization of closed immersions and \ref{integral-ring-map-absolutely-integrally-closed-ring-surjective}: A morphism of schemes $f: X \rightarrow Y$ is a closed immersion if and only if it is affine, and for all affine open $Spec(A) \subset Y$, and $Spec(B) = f^{-1}(Spec(A))$, the ring homomorphism $A \rightarrow B$ is surjective.
\end{proof}

\section{Absolute Integral Closure And Localization}
\label{absolute integral closure and localization}

\noindent

\begin{lemma} 
\label{integral-closure-preserved-localization}
Let $A \subset B$ be an extension of rings, and $C$ the integral closure of $A$ in $B$. If $S \subset A$ is a multiplicative subset, then $S^{-1}C$ is the integral closure of $S^{-1}A$ in $S^{-1}B$.
\end{lemma}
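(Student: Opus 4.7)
The plan is to establish two inclusions in $S^{-1}B$: $S^{-1}C$ is contained in the integral closure of $S^{-1}A$, and conversely every element of $S^{-1}B$ integral over $S^{-1}A$ lies in $S^{-1}C$. The first inclusion is routine: given $c \in C$ satisfying $c^n + a_{n-1}c^{n-1} + \cdots + a_0 = 0$ over $A$, and any $s \in S$, I would divide the relation by $s^n$ inside $S^{-1}B$ to obtain the monic relation $(c/s)^n + (a_{n-1}/s)(c/s)^{n-1} + \cdots + (a_0/s^n) = 0$ with coefficients in $S^{-1}A$, showing $c/s$ is integral over $S^{-1}A$.

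For the reverse inclusion, start with $b/t \in S^{-1}B$ satisfying $(b/t)^n + (a_{n-1}/s_{n-1})(b/t)^{n-1} + \cdots + (a_0/s_0) = 0$ in $S^{-1}B$, with $a_i \in A$ and $s_i \in S$. I would set $s := s_0 s_1 \cdots s_{n-1}$ and multiply through by $s^n$, redistributing one factor of $s$ into each occurrence of $b/t$, and using that $s/s_i \in A$ to keep the coefficients in $A$. This produces a monic equation for $sb/t$ with coefficients in $A$ (viewed inside $S^{-1}A$). Clearing $t^n$ next yields a relation of the form $(sb)^n + \sum_{i=0}^{n-1} \alpha_i (sb)^i t^{n-i} = 0$ in $S^{-1}B$, where $\alpha_i \in A$.

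Because this relation lives in $S^{-1}B$ rather than $B$, there must exist $u \in S$ annihilating it in $B$. The key step is then to multiply by $u^{n-1}$ and redistribute the factors of $u$ so that the equation becomes $(usb)^n + \sum_{i=0}^{n-1} \alpha_i (ut)^{n-i} (usb)^i = 0$ in $B$: a genuinely monic equation for $usb$ over $A$, where one uses crucially that $t \in S \subset A$, so $ut \in A$. This exhibits $usb$ as an element of $C$, and then $b/t = (usb)/(ust) \in S^{-1}C$, since $ust \in S$. The main obstacle is the bookkeeping of the denominator-clearing, in particular the final classical trick of absorbing the auxiliary annihilator $u$ into the element $sb$ via the $u^{n-1}$ maneuver; this is what converts a relation holding only in $S^{-1}B$ into an honest equation of integral dependence over $A$ inside $B$.
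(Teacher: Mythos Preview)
Your argument is correct and complete; the two-inclusion strategy with the denominator-clearing trick (absorbing the annihilator $u$ via multiplication by $u^{n-1}$ so that $usb$ satisfies a genuine monic equation over $A$ inside $B$) is exactly the classical proof. The paper itself does not prove this lemma at all: it simply cites \cite[Lemma 0307]{stacks-project}. So you have supplied full details where the paper defers to an external reference, and your write-up is essentially the same argument one finds in that reference or in standard texts such as Atiyah--Macdonald.
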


\begin{proof}
See \cite[Lemma 0307]{stacks-project}
\end{proof}

\begin{lemma} 
\label{localization-integral-domains}
Let $A$ be a domain. Then for any multiplicative subset $S$ of $A$, $S^{-1}A$ is a subring of $Frac(A)$.
\end{lemma}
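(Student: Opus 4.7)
The plan is to produce a canonical injective ring homomorphism $\varphi: S^{-1}A \to Frac(A)$ and identify $S^{-1}A$ with its image. First, observe that if $0 \in S$ then $S^{-1}A$ is the zero ring and cannot be a subring of the nonzero ring $Frac(A)$, so the meaningful case of the lemma is $0 \notin S$, which I will assume. Under this assumption every element of $S$ is a nonzero element of the domain $A$, hence a unit in $Frac(A)$. The inclusion $\iota: A \hookrightarrow Frac(A)$ therefore sends each element of $S$ to a unit, so by the universal property of localization it factors uniquely through a ring homomorphism $\varphi: S^{-1}A \to Frac(A)$ with $\varphi(a/s) = \iota(a)\iota(s)^{-1}$, which is just $a/s$ viewed inside $Frac(A)$.

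The main thing to check is that $\varphi$ is injective. Suppose $\varphi(a/s) = 0$ in $Frac(A)$. Since $\iota(s)$ is a unit in the field $Frac(A)$, this forces $\iota(a) = 0$, i.e. $a = 0$ in $A$ (using that $\iota$ is the inclusion of the domain $A$ into its field of fractions). Consequently $a/s = 0$ in $S^{-1}A$, so $\ker(\varphi) = 0$ and $\varphi$ is injective. The image of $\varphi$ is then a subring of $Frac(A)$ isomorphic to $S^{-1}A$, and by this identification $S^{-1}A$ sits inside $Frac(A)$ as claimed.

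There is no real obstacle here; the only subtle point is keeping straight which equivalence relation is being used to identify fractions. Well-definedness of $\varphi$ amounts to the fact that if $a/s = a'/s'$ in $S^{-1}A$, so that $t(as' - a's) = 0$ for some $t \in S$, then since $t \ne 0$ and $A$ is a domain we may cancel $t$ to get $as' = a's$, i.e. $a/s = a'/s'$ already holds inside $Frac(A)$. This observation is essentially equivalent to the injectivity argument above and could be used in place of the universal-property formulation if one prefers a direct construction.
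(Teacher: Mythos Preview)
Your argument is correct and follows essentially the same approach as the paper: define the natural map $S^{-1}A \to Frac(A)$, $a/s \mapsto a/s$, and use that $A$ is a domain to see it is injective. The only difference is in the degenerate case $0 \in S$: the paper declares the zero ring to be a subring of $Frac(A)$, whereas you (consistently with the paper's own convention that ring maps preserve $1$) exclude this case as vacuous; either way the substantive content is identical.
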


\begin{proof}
If $S$ contains $0$, then $S^{-1}A$ is the zero ring, which is clearly a subring of $Frac(A)$. If $0 \notin S$, then the natural map $\pi: S^{-1}A \rightarrow Frac(A)$, where $\pi(a/s) = a/s$ is an injection, because $A$ is a domain. Hence, $S^{-1}A$ can be identified as a subring of $Frac(A)$ is a natural way.
\end{proof}

\begin{lemma} 
\label{normality-preserved-localization}
Let $A$ be a normal domain. Then for any multiplicative subset $S \subset A$ such that $0 \notin S$, $S^{-1}A$ is integrally closed in $Frac(A)$. Moreover, $Frac(A) = Frac(S^{-1}A)$.
\end{lemma}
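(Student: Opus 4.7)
The plan is to deduce both assertions by combining Lemma \ref{integral-closure-preserved-localization} with the preceding Lemma \ref{localization-integral-domains}, which identifies $S^{-1}A$ as a subring of $Frac(A)$ when $0 \notin S$.

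First I would establish the integral closedness claim. Since $A$ is normal, the integral closure of $A$ in $Frac(A)$ is $A$ itself. Applying Lemma \ref{integral-closure-preserved-localization} to the extension $A \subset Frac(A)$ with the multiplicative set $S$ yields that $S^{-1}A$ is the integral closure of $S^{-1}A$ in $S^{-1}(Frac(A))$. The key observation is then that $S^{-1}(Frac(A)) = Frac(A)$: elements of $S$ are nonzero elements of the domain $A$, and so already invertible in the field $Frac(A)$, which means localizing $Frac(A)$ at $S$ changes nothing. Thus $S^{-1}A$ is integrally closed in $Frac(A)$, as required.

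For the equality $Frac(A) = Frac(S^{-1}A)$, the inclusion $Frac(S^{-1}A) \subset Frac(A)$ is immediate from $S^{-1}A \subset Frac(A)$ (using that $Frac(A)$ is a field containing $S^{-1}A$). Conversely, given $a/b \in Frac(A)$ with $a, b \in A$ and $b \neq 0$, I would write $a/b = (a/1)(b/1)^{-1}$, where $a/1, b/1 \in S^{-1}A$, and note that $b/1 \neq 0$ in $S^{-1}A$ since $A$ is a domain and $0 \notin S$. Hence $a/b \in Frac(S^{-1}A)$.

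There is no genuine obstacle here, since the work has been done in Lemmas \ref{integral-closure-preserved-localization} and \ref{localization-integral-domains}; the only point requiring a moment of care is to justify that $S^{-1}(Frac(A)) = Frac(A)$, which in turn relies on the hypothesis $0 \notin S$ together with the fact that $A$ is a domain.
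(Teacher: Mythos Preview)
Your proposal is correct and follows essentially the same approach as the paper: both invoke Lemma~\ref{integral-closure-preserved-localization} for the integral closedness and use the chain of inclusions $A \subset S^{-1}A \subset Frac(A)$ (from Lemma~\ref{localization-integral-domains}) to identify the fraction fields. The only cosmetic difference is that for the inclusion $Frac(A) \subset Frac(S^{-1}A)$ the paper simply notes $A \subset S^{-1}A$, whereas you verify it element by element; your added justification that $S^{-1}(Frac(A)) = Frac(A)$ is a helpful detail the paper leaves implicit.
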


\begin{proof}
That $S^{-1}A$ is integrally closed in $Frac(A)$ follows from \ref{integral-closure-preserved-localization}. By \ref{localization-integral-domains}, $S^{-1}A$ is a subring of $Frac(A)$. So $Frac(S^{-1}A) \subset Frac(A)$. On the other hand, $A$ is a subring of $S^{-1}A$. Thus $Frac(A) \subset Frac(S^{-1}A)$.
\end{proof}

\begin{lemma} 
\label{absolutely-integrally-closed-domains-preserved-localization}
Let $A$ be an absolutely integrally closed domain. Then for any multiplicative subset $S$ of $A$ such that $0 \notin S$, $S^{-1}A$ is an absolutely integrally closed domain.
\end{lemma}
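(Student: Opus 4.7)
The plan is to reduce the statement to a direct application of Lemma \ref{normality-preserved-localization}, which already handles both of the features needed in the definition of an absolutely integrally closed domain: being integrally closed in the fraction field, and having that fraction field be algebraically closed. The only thing to check is that each of these two properties is inherited by $S^{-1}A$.

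First I would observe that an absolutely integrally closed domain is, in particular, a normal domain, since Definition \ref{def-absolute-integrally-closed} requires $A$ to be integrally closed in $K = \mathrm{Frac}(A)$. Given $0 \notin S$, Lemma \ref{normality-preserved-localization} then tells me two things simultaneously: that $S^{-1}A$ is integrally closed in $\mathrm{Frac}(A)$, and that $\mathrm{Frac}(S^{-1}A) = \mathrm{Frac}(A) = K$. Since $K$ is algebraically closed by hypothesis on $A$, the fraction field of $S^{-1}A$ is algebraically closed. Combining this with the fact that $S^{-1}A$ is integrally closed inside $\mathrm{Frac}(S^{-1}A) = K$ gives exactly Definition \ref{def-absolute-integrally-closed} for $S^{-1}A$, and we are done.

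There is no real obstacle here: the entire content is packaged in Lemma \ref{normality-preserved-localization}, and all one has to supply is the observation that the fraction field is left unchanged so that algebraic closedness of $K$ transfers to $S^{-1}A$. An alternative route, which would take slightly more work, is to pick a monic $f \in S^{-1}A[x]$, clear denominators on the coefficients to obtain a related monic polynomial whose roots live in $A$ by Lemma \ref{absolutely-integrally-closed-splitting-monic-polynomial}, and then deduce a linear factorization of $f$ in $S^{-1}A[x]$ before invoking Lemma \ref{absolutely-integrally-closed-splitting-monic-polynomial} again; but the first approach via \ref{normality-preserved-localization} is essentially immediate and seems to be the intended one.
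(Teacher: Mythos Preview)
Your argument is correct and is exactly the approach the paper takes: the paper's proof is a single sentence saying this is a straightforward application of Lemma~\ref{normality-preserved-localization}. You have simply unpacked that application in full, supplying the observation that $\mathrm{Frac}(S^{-1}A)=\mathrm{Frac}(A)$ carries the algebraic closedness over.
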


\begin{proof}
This is a straightforward application of \ref{normality-preserved-localization}.
\end{proof}

\begin{lemma} 
\label{localization-products}
Let $A_1, ..., A_n$ be domains, and $S \subset A_1 \times ... \times A_n$ a multiplicative subset. If $\pi_i:A_1 \times ... \times A_n \rightarrow A_i$ denote the canonical projection homomorphisms, and $S_i = \pi_i(S)$, then $S^{-1}(A_1 \times \cdots \times A_n) \cong S^{-1}_1A \times ... \times S^{-1}_nA_n$.
\end{lemma}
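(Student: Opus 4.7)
The plan is to construct the natural map
\[
\varphi \colon S^{-1}(A_1 \times \cdots \times A_n) \longrightarrow S_1^{-1}A_1 \times \cdots \times S_n^{-1}A_n, \qquad \frac{(a_1,\ldots,a_n)}{(s_1,\ldots,s_n)} \longmapsto \left(\frac{a_1}{s_1},\ldots,\frac{a_n}{s_n}\right),
\]
which is sensible on representatives because each $s_i = \pi_i((s_1,\ldots,s_n))$ lies in $S_i$ by definition of $S_i$. Well-definedness on equivalence classes and the ring homomorphism axioms reduce to routine unwinding of the fraction relations: if $(t_1,\ldots,t_n) \in S$ clears a relation in $S^{-1}(A_1 \times \cdots \times A_n)$, then each component $t_i \in S_i$ clears the corresponding relation in $S_i^{-1}A_i$.

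The substance of the proof lies in proving that $\varphi$ is bijective, and both directions rely on a single device. Given target elements $t_i \in S_i$ for $i = 1, \ldots, n$, for each $i$ choose $\sigma^{(i)} = (\sigma^{(i)}_1, \ldots, \sigma^{(i)}_n) \in S$ with $\sigma^{(i)}_i = t_i$, and form $\tau := \sigma^{(1)} \sigma^{(2)} \cdots \sigma^{(n)} \in S$. The $i$-th coordinate of $\tau$ equals $t_i v_i$, where $v_i := \prod_{j \ne i} \sigma^{(j)}_i$ is a product of elements of $S_i$, hence $v_i \in S_i$. Thus $\tau$ is a single element of $S$ whose $i$-th coordinate is $t_i$ times something in $S_i$, for every $i$ simultaneously.

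For injectivity, suppose $\varphi$ kills $(a_1,\ldots,a_n)/(s_1,\ldots,s_n)$. Then for each $i$ there is $t_i \in S_i$ with $t_i a_i = 0$, and the $\tau$ built above annihilates $(a_1, \ldots, a_n)$ coordinatewise, forcing the fraction to be zero in $S^{-1}(A_1 \times \cdots \times A_n)$. For surjectivity, given $(a_1/t_1, \ldots, a_n/t_n)$ in the codomain, the element $(v_1 a_1, \ldots, v_n a_n)/\tau$ of $S^{-1}(A_1 \times \cdots \times A_n)$ is sent by $\varphi$ to the tuple whose $i$-th entry simplifies to $v_i a_i / (t_i v_i) = a_i/t_i$ in $S_i^{-1}A_i$, since $v_i \in S_i$ is invertible there.

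The only mild obstacle is that $S$ is an arbitrary multiplicative subset, with no a priori relation to the product $S_1 \times \cdots \times S_n$; there is no reason a prescribed $t_i \in S_i$ should be paired with $S_j$-elements in the other coordinates of some single element of $S$. Forming the product $\sigma^{(1)} \cdots \sigma^{(n)}$ is precisely what forces the good coordinate behavior in all slots at once, and once this is available both injectivity and surjectivity fall out immediately.
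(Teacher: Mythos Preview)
Your argument is correct. The paper itself omits the proof entirely, so there is no approach to compare against; your construction of the natural map together with the ``product trick'' $\tau = \sigma^{(1)}\cdots\sigma^{(n)}$ to manufacture a single element of $S$ with prescribed behavior in every coordinate is the standard way to handle this, and you carry it out cleanly.

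One minor observation: your proof nowhere uses the hypothesis that the $A_i$ are domains. The result holds for arbitrary commutative rings $A_1,\ldots,A_n$, and the domain assumption in the paper's statement is presumably there only because that is the case needed for the subsequent lemma on absolutely integrally closed rings.
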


\begin{proof}
Omitted.
\end{proof}

\begin{lemma} 
\label{absolutely-integrally-closed-ring-preserved-localization}
Let $R$ be an absolutely integrally closed ring. If $S \subset R$ is a multiplicative subset such that $0 \notin S$, then $S^{-1}R$ is absolutely integrally closed.
\end{lemma}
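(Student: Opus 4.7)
The plan is to reduce the claim to the domain case already handled in \ref{absolutely-integrally-closed-domains-preserved-localization} by using the decomposition of localizations of finite products given in \ref{localization-products}. Write $R \cong A_1 \times \cdots \times A_n$ with each $A_i$ an absolutely integrally closed domain, and let $\pi_i : R \to A_i$ be the projections, $S_i := \pi_i(S)$. Then \ref{localization-products} gives
\[
S^{-1}R \;\cong\; S_1^{-1}A_1 \times \cdots \times S_n^{-1}A_n.
\]
I will analyze each factor separately: if $0 \notin S_i$, then $S_i^{-1}A_i$ is an absolutely integrally closed domain by \ref{absolutely-integrally-closed-domains-preserved-localization}; if $0 \in S_i$, the factor $S_i^{-1}A_i$ is the zero ring, and such trivial factors can be harmlessly dropped from the product without changing the isomorphism class.

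The one subtlety, and the step I expect to require the most care, is verifying that not every $S_i$ contains $0$, so that after deleting trivial factors $S^{-1}R$ really is exhibited as a (nonempty) finite product of absolutely integrally closed domains, matching Definition \ref{def-absolutely-integrally-closed-ring}. For this I would argue by contradiction: if for every $i$ there existed $s^{(i)} \in S$ with $\pi_i(s^{(i)}) = 0$, then the product $s^{(1)} s^{(2)} \cdots s^{(n)}$ lies in $S$ (since $S$ is multiplicative) and has $j$-th coordinate $\prod_i \pi_j(s^{(i)})$, which vanishes because the factor indexed by $i = j$ is zero. This would force $0 \in S$, contradicting the hypothesis.

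Combining these observations, $S^{-1}R$ is isomorphic to $\prod_{i\,:\,0 \notin S_i} S_i^{-1}A_i$, a nonempty finite product of absolutely integrally closed domains, hence absolutely integrally closed by Definition \ref{def-absolutely-integrally-closed-ring}.
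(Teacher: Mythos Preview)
Your proof is correct and follows essentially the same approach as the paper: decompose $R$ as a product of absolutely integrally closed domains, apply \ref{localization-products}, discard the zero factors, and invoke \ref{absolutely-integrally-closed-domains-preserved-localization} on the survivors. You actually supply more detail than the paper does on the point that not all $S_i$ can contain $0$; the paper asserts this in one line (``Since $0 \notin S$, there exists $i$ such that $0 \notin S_i$''), whereas you spell out the multiplicative argument explicitly.
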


\begin{proof}
We can assume without loss of generality that $R = A_1 \times ... \times A_n$, where the $A_i$ are absolutely integrally closed domains. Since $0 \notin S$, there exists $i \in \{1,...,n\}$ such that $0 \notin S_i$ (here $S_i$ is as in \ref{localization-products}). After rearranging the $A_i's$ and renumbering, we can assume without loss of generality that $\{1, ..., m\} \subset \{1,..., n\}$ is the set of of elements such that $0 \in S_{k}$, for $k = 1, ..., m$, and $0 \notin S_{k}$, for $k \in \{m+1, \cdots, n\}$ (note that $m \neq n$ by an earlier remark). Then by \ref{localization-products}, we have $S^{-1}R \cong S^{-1}_{m+1}A_{m+1} \times ... \times S^{-1}_nA_n$, which is a finite product of absolutely integrally closed domains by \ref{absolutely-integrally-closed-domains-preserved-localization}. This proves the Lemma.
\end{proof}

\begin{lemma} 
\label{normality-is-local}
Let $A$ be an integral domain. Then the following are equivalent:

\begin{enumerate}
\item[(a)] $A$ is integrally closed.
\item[(b)] For all primes ideals $p$, $A_p$ is integrally closed.
\item[(c)] For all maximal ideals $m$, $A_m$ is integrally closed.
\end{enumerate}
\end{lemma}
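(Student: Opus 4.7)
The plan is to reduce everything to lemma \ref{integral-closure-preserved-localization}, which says that forming the integral closure commutes with localization. Throughout, let $K = Frac(A)$ and let $C$ denote the integral closure of $A$ in $K$, so that $A$ is integrally closed precisely when $A = C$. Note that by \ref{normality-preserved-localization} (or directly from \ref{localization-integral-domains}), for any multiplicative set $S \subset A$ with $0 \notin S$, the localization $S^{-1}A$ sits inside $K$ and has $Frac(S^{-1}A) = K$, so statements about $S^{-1}A$ being integrally closed can be tested inside the single field $K$.

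For (a) $\Rightarrow$ (b), assume $A = C$. Applying \ref{integral-closure-preserved-localization} to the multiplicative set $S = A \setminus p$ and the extension $A \subset K$, the integral closure of $A_p$ in $K = Frac(A_p)$ is $S^{-1}C = S^{-1}A = A_p$, so $A_p$ is integrally closed. The implication (b) $\Rightarrow$ (c) is immediate since maximal ideals are prime.

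For the main implication (c) $\Rightarrow$ (a), I would again invoke \ref{integral-closure-preserved-localization}: for each maximal ideal $m$ of $A$, the integral closure of $A_m$ in $K$ is $C_m$, where $C_m$ denotes the localization of $C$ at the multiplicative set $A \setminus m$. By hypothesis $A_m$ is integrally closed in $K$, so $A_m = C_m$ for every maximal $m$. The remaining step is to promote this local equality to a global one. View the inclusion $A \hookrightarrow C$ as a map of $A$-modules, and consider the cokernel $M := C/A$. Since localization is exact, $M_m = C_m / A_m = 0$ for every maximal ideal $m$ of $A$. The standard fact that an $A$-module which vanishes at every maximal ideal is itself zero then forces $M = 0$, i.e. $A = C$, which is precisely the assertion that $A$ is integrally closed.

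The proof is essentially mechanical once one has \ref{integral-closure-preserved-localization} in hand; the only substantive step is the ``local-implies-global'' argument at the end of (c) $\Rightarrow$ (a), and that is itself a standard module-theoretic observation rather than a genuine obstacle.
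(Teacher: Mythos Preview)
Your proof is correct and complete. The route you take for (c) $\Rightarrow$ (a) differs from the paper's, though both are standard. The paper argues directly at the level of elements: given $\frac{a}{b} \in K$ integral over $A$, it introduces the ``denominator ideal'' $\mathfrak{I} = \{s \in A : s\cdot \frac{a}{b} \in A\}$, observes that for each maximal $m$ the hypothesis forces $\frac{a}{b} \in A_m$, hence some $s_m \in A \setminus m$ lies in $\mathfrak{I}$, and concludes $\mathfrak{I} = A$. Your argument is the module-theoretic repackaging of the same idea: you form $M = C/A$, use \ref{integral-closure-preserved-localization} to get $M_m = 0$ for all maximal $m$, and invoke the general local-to-global principle for modules. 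Your version is cleaner and makes the dependence on \ref{integral-closure-preserved-localization} explicit for all three implications; the paper's version is slightly more self-contained in that it unpacks the ``$M_m = 0$ for all $m$ implies $M = 0$'' step by hand rather than citing it. Neither approach has any real advantage over the other here.
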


\begin{proof}
It is clear that (a) $\Rightarrow$ (b) $\Rightarrow$ (c). So, it suffices to prove (c) $\Rightarrow$ (a). Note that all the rings can be naturally identified as subrings of $Frac(A)$, and since $A \subset A_m \subset Frac(A)$, hence $Frac(A) = Frac(A_m)$. Let $\frac{a}{b} \in Frac(A)$. Define the ideal $\mathfrak{I} = \{s \in A: s\frac{a}{b} \in A\}$. If $\frac{a}{b}$ is integral over $A$, it suffices to show that $\mathfrak{I} = (1)$, from which it follows that $\frac{a}{b} \in A$. Let $\frac{a}{b}$ be a root of $X^n + a_{n-1}X^{n-1} +...+a_0$. Since $A \subset A_m$, hence for all maximal ideals $m \subset A$, $\frac{a}{b}$ which is an element of $Frac(A_m)$, is integral over $A_m$. Since $A_m$ is integrally closed we have that $\frac{a}{b} \in A_m$. Thus, there exists $\frac{a_m}{s_m} \in A_m$ such that $\frac{a}{b} = \frac{a_m}{s_m}$. Then for all $m$, $s_m \in \mathfrak{I}$. Since $s_m \in A_m$, $\mathfrak{I}$ is not contained in any maximal ideal of $A$, and as a result $\mathfrak{I} = A$. 
\end{proof}

\begin{lemma} 
\label{absolutely-integrally-closed-domain-is-local}
Let $A$ be an integral domain. Then the following are equivalent:

\begin{enumerate}
\item[(a)] $A$ is absolutely integrally closed.
\item[(b)] For all primes $p$ ideals, $A_p$ is absolutely integrally closed.
\item[(c)] For all maximal ideals $m$, $A_m$ is absolutely integrally closed.
\end{enumerate}
\end{lemma}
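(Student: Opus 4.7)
The plan is to prove the chain $(a) \Rightarrow (b) \Rightarrow (c) \Rightarrow (a)$, with only the last implication requiring any real content; the first two follow immediately from prior lemmas.

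For $(a) \Rightarrow (b)$, let $p \subset A$ be prime. Since $A$ is a domain and $p$ is a (proper) ideal containing $0$, the multiplicative set $S = A \setminus p$ satisfies $0 \notin S$. Then $A_p = S^{-1}A$ is absolutely integrally closed by \ref{absolutely-integrally-closed-domains-preserved-localization}. The implication $(b) \Rightarrow (c)$ is trivial since maximal ideals are prime.

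The content is in $(c) \Rightarrow (a)$, and the idea is to split ``absolutely integrally closed'' into its two requirements: $A$ is integrally closed in $K := Frac(A)$, and $K$ is algebraically closed. For the first, since each $A_m$ is absolutely integrally closed it is in particular integrally closed, and so $A$ is integrally closed in $K$ by \ref{normality-is-local}. For the second, fix any maximal ideal $m$ of $A$. The inclusions $A \subset A_m \subset K$ force $Frac(A_m) = K$, so the hypothesis that $A_m$ is absolutely integrally closed says exactly that $K$ is algebraically closed. Combining the two conclusions gives that $A$ is absolutely integrally closed.

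There is no real obstacle here: the proof is essentially a packaging of \ref{normality-is-local} together with \ref{absolutely-integrally-closed-domains-preserved-localization}, plus the trivial observation that localizing a domain inside its fraction field does not change the fraction field. The only mildly subtle point is to notice that ``$K$ is algebraically closed'' is a statement about $Frac(A)$ alone, so it can be read off from any single localization $A_m$ for which we already know the conclusion.
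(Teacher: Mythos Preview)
Your proof is correct and follows essentially the same route as the paper: use \ref{absolutely-integrally-closed-domains-preserved-localization} for $(a)\Rightarrow(b)$, note $(b)\Rightarrow(c)$ is trivial, and for $(c)\Rightarrow(a)$ apply \ref{normality-is-local} to get integral closedness and read off algebraic closedness of $K$ from a single $A_m$ via $Frac(A_m)=K$. The only cosmetic difference is that the paper explicitly invokes Zorn's lemma to guarantee a maximal ideal exists before fixing one.
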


\begin{proof}
(a) $\Rightarrow$ (b) follows from \ref{absolutely-integrally-closed-domains-preserved-localization}, and (b) $\Rightarrow$ (c) is obvious. As in \ref{normality-is-local}, it suffices to prove that (c) $\Rightarrow$ (a). By \ref{normality-is-local}, $A$ is integrally closed. Since $A$ is a nonzero ring by hypothesis, by Zorn's lemma, $A$ has at least one maximal ideal say $m$. Since $A$ is a domain, we have $A \subset A_m \subset Frac(A)$. But, $Frac(A) = Frac(A_m)$, and the latter is an algebraically closed field by the hypothesis of (c). Then $A$ is absolutely integrally closed by \ref{def-absolute-integrally-closed}.
\end{proof}

\begin{lemma} 
\label{intersections-absolutely-integrally-closed-domains}
Let $K$ be a field. Let $A$, $B$ be subrings of $K$ which are absolutely integrally closed. Then $A \cap B$ is an absolutely integrally closed domain. In fact, this lemma is true for arbitrary intersections of subrings of $K$ which are absolutely integrally closed.
\end{lemma}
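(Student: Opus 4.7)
The plan is to use the characterization of absolutely integrally closed domains given in Lemma \ref{absolutely-integrally-closed-splitting-monic-polynomial}, namely that splitting of every monic polynomial into linear factors is equivalent to being absolutely integrally closed.

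First, I would observe that $A \cap B$ is a subring of the field $K$, so it is automatically an integral domain (it contains $1$ and has no zero divisors). Thus it remains to show that every monic $f(x) \in (A \cap B)[x]$ factors as a product of linear polynomials in $(A \cap B)[x]$.

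For the main argument, let $f(x) \in (A\cap B)[x]$ be monic. Viewing $f$ as an element of $A[x]$, the hypothesis on $A$ together with Lemma \ref{absolutely-integrally-closed-splitting-monic-polynomial} yields a factorization
\[
f(x) = (x - \alpha_1)\cdots (x - \alpha_n),\qquad \alpha_i \in A.
\]
Similarly, viewing $f$ in $B[x]$ gives
\[
f(x) = (x - \beta_1)\cdots (x - \beta_n),\qquad \beta_j \in B.
\]
Both factorizations take place inside $K[x]$, which is a unique factorization domain because $K$ is a field. Hence the two multisets $\{\alpha_1,\ldots,\alpha_n\}$ and $\{\beta_1,\ldots,\beta_n\}$ in $K$ coincide (up to reordering). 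Consequently each $\alpha_i$ lies in both $A$ and $B$, so $\alpha_i \in A \cap B$, and the factorization of $f$ takes place in $(A\cap B)[x]$. By Lemma \ref{absolutely-integrally-closed-splitting-monic-polynomial}, $A \cap B$ is absolutely integrally closed.

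For the arbitrary intersection $\bigcap_{i \in I} A_i$ of absolutely integrally closed subrings of $K$, exactly the same argument applies: a monic polynomial $f \in \big(\bigcap_i A_i\big)[x]$ splits in each $A_i[x]$, and uniqueness of factorization in $K[x]$ forces the roots (viewed in $K$) to lie in every $A_i$, hence in the intersection. There is no real obstacle here; the only ingredient beyond the splitting criterion is uniqueness of factorization of monic polynomials over the field $K$, which is immediate.
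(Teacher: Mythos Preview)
Your proof is correct and follows essentially the same approach as the paper: factor a monic $f$ in each of $A[x]$ and $B[x]$, then use that $K$ is a field to identify the two sets of roots, forcing them into $A\cap B$. The only cosmetic difference is that you invoke unique factorization in $K[x]$ (with multisets) while the paper phrases it as ``$f$ has at most $n$ roots in $K$''; your formulation is arguably cleaner about multiplicities, but the argument is the same.
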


\begin{proof}
Let $C = A \cap B$. Let $f(x) \in C[x]$ be a monic polynomial. Then $f(x) \in A[x], B[x]$. Thus $f(x)$ has $n$ roots in $A$, and $n$ roots in $B$, because $A$ and $B$ are absolutely integrally closed. But, $f(x)$ has at most $n$ roots. Hence, the roots of $f(x)$ in $A$ must be in $B$, and vice versa. This means that the roots of $f(x)$ are actually in $C$. As a result, $f(x)$ factors into a product of linear polynomials over $C$. We are then done by \ref{absolutely-integrally-closed-splitting-monic-polynomial}. It is clear that the method of proof extends to arbitrary intersections of subrings of $K$ which are absolutely integrally closed.
\end{proof}

\begin{lemma} 
\label{affine-locality-absolutely-integrally-closed-domains}
Let $A$ be an integral domain. Then:

\begin{enumerate}
\item[(a)] $A$ is absolutely integrally closed $\Rightarrow$ for all $f \in A - 0$, $A_f$ is absolutely integrally closed.
\item[(b)] If $(f_1, ... ,f_n) = A$, and $A_{f_i}$ is an absolutely integrally closed domain for all $i$, then $A$ is an absolutely integrally closed domain.
\end{enumerate}
\end{lemma}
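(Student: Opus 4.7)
The plan is to handle (a) directly from the localization result already established, and to reduce (b) to the local criterion for being absolutely integrally closed.

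For part (a), since $A$ is a domain and $f \neq 0$, the multiplicative set $\{1, f, f^{2}, \ldots\}$ does not contain $0$. Thus $A_{f}$ is a localization of $A$ at a multiplicative set avoiding $0$, and \ref{absolutely-integrally-closed-domains-preserved-localization} applies immediately to give that $A_{f}$ is absolutely integrally closed. This part requires no new ideas.

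For part (b), the strategy is to verify the local condition (c) of \ref{absolutely-integrally-closed-domain-is-local} for $A$, and then conclude by that lemma. Let $m \subset A$ be any maximal ideal. Since the $f_{i}$ generate the unit ideal, at least one of them, say $f_{j}$, does not lie in $m$. The standard identification of localizations then gives $A_{m} \cong (A_{f_{j}})_{m A_{f_{j}}}$, so $A_{m}$ is obtained from $A_{f_{j}}$ by localizing at a multiplicative set. Because $A_{f_{j}}$ is an absolutely integrally closed domain by hypothesis, it is in particular a domain, so the multiplicative set $A_{f_{j}} \setminus m A_{f_{j}}$ does not contain $0$. Applying \ref{absolutely-integrally-closed-domains-preserved-localization} once more, $A_{m}$ is absolutely integrally closed. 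Since this holds for every maximal ideal $m$, \ref{absolutely-integrally-closed-domain-is-local} implies that $A$ itself is absolutely integrally closed.

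There is no real obstacle here: (a) is one line, and (b) is essentially the standard affine-local argument, routed through \ref{absolutely-integrally-closed-domain-is-local}. The only mildly delicate point is recognizing that $A_{m}$ may be regarded as a localization of $A_{f_{j}}$ (which requires $f_{j} \notin m$), but this is automatic from the hypothesis that the $f_{i}$ generate $A$.
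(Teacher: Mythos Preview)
Your proof is correct. Part (a) matches the paper exactly. For (b), however, you take a genuinely different route: the paper invokes \ref{intersections-absolutely-integrally-closed-domains} and proves directly that $A = A_{f_1} \cap \cdots \cap A_{f_n}$ inside $\mathrm{Frac}(A)$ via the standard ideal-of-denominators argument, then concludes because an intersection of absolutely integrally closed subrings of a field is again absolutely integrally closed. You instead route through the local criterion \ref{absolutely-integrally-closed-domain-is-local}, localizing further at each maximal ideal $m$ and identifying $A_m$ with a localization of some $A_{f_j}$. Your argument is arguably cleaner, since it avoids the explicit intersection computation and the appeal to \ref{intersections-absolutely-integrally-closed-domains}; the paper's approach has the minor side benefit of establishing $A = \bigcap_i A_{f_i}$ along the way, though that identity is not reused elsewhere in the paper.
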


\begin{proof}
(a) follows from \ref{absolutely-integrally-closed-domains-preserved-localization}. For (b), we have for all $i \in \{1,...,n\}$, $A_{f_i} \subset Frac(A) = Frac(A_{f_i})$, and since the $A_{f_i}$ are absolutely integrally closed, $Frac(A)$ is algebraically closed. By \ref{intersections-absolutely-integrally-closed-domains}, it suffices to show that $A = A_{f_1} \cap ... \cap A_{f_n}$.

\noindent
Let $\frac{a}{t} \in A_{f_1} \cap ... \cap A_{f_n}$. Let $\mathfrak{I} = \{s \in A: s\frac{a}{t} \in A\}$. Then $\mathfrak{I}$ is clearly an ideal of $A$. Since $\frac{a}{t} \in A_{f_i}$, it follows that there exists an $f^{m_i}_i \in \mathfrak{I}$, for some $m_i \in \mathbb{N}$, for all $i \in \{1,...,n\}$. Hence $(f^{m_1}_1,...,f^{m_n}_n) \subset \mathfrak{I}$, which implies that $rad((f^{m_1}_1,...,f^{m_n}_n)) \subset rad(\mathfrak{I})$. But $rad((f^{m_1}_1,...,f^{m_n}_n)) = rad((f_1,...,f_n)) = A$. This shows that $rad(\mathfrak{I}) = A$, which means that $1 \in \mathfrak{I}$, and $\frac{a}{t} \in A$. We have shown that $A_{f_1} \cap ... \cap A_{f_n} \subset A$. Since $A \subset A_{f_i}$ for all $i$, we win.
\end{proof}

\begin{lemma} 
\label{absolutely-integrally-closed-integral-scheme-is-affine-local}
Let $X$ be an integral scheme. Then for every nonempty affine open $Spec(A) \subset X$, $A$ is an absolutely integrally closed domain if and only if there exists an affine open cover $\{Spec(A_i): i \in I\}$ of $X$ such that for all $i \in I$, $A_i$ is an absolutely integrally closed domain.
\end{lemma}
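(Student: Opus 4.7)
The easy direction is immediate: any affine open cover of $X$ (with empty members discarded, if any) witnesses the existential statement. For the converse, I fix an affine open cover $\{Spec(A_i)\}_{i \in I}$ of $X$ with each $A_i$ an absolutely integrally closed domain, let $Spec(A) \subset X$ be an arbitrary nonempty affine open, and aim to show $A$ is absolutely integrally closed. My overall strategy is to apply Lemma \ref{affine-locality-absolutely-integrally-closed-domains}(b), which reduces the problem to exhibiting finitely many elements $h_1, \ldots, h_n \in A$ generating the unit ideal such that each $A_{h_j}$ is absolutely integrally closed.

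For each $x \in Spec(A)$, I would pick $i \in I$ with $x \in Spec(A_i)$ and then pick $h \in A$ with $x \in D(h) \subset Spec(A) \cap Spec(A_i)$, which is possible because the intersection is open in $Spec(A)$. This $h$ is my candidate; the next step is to show that $A_h$ is itself absolutely integrally closed. To do this, I would apply Lemma \ref{affine-locality-absolutely-integrally-closed-domains}(b) a second time, now inside $Spec(A_h)$: since $D(h)$ is also open in $Spec(A_i)$ and is quasi-compact, finitely many principal opens $D(g_1), \ldots, D(g_m)$ of $Spec(A_i)$ with $g_k \in A_i$ cover it with each $D(g_k) \subset D(h)$, and each $(A_i)_{g_k}$ is absolutely integrally closed by Lemma \ref{absolutely-integrally-closed-domains-preserved-localization}.

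The crucial identification is that $(A_i)_{g_k}$ is isomorphic to a principal localization of $A_h$. Writing $\overline{g_k}$ for the image of $g_k$ under the restriction map $A_i \to A_h$ induced by the inclusion $D(h) \hookrightarrow Spec(A_i)$, the principal open $D(\overline{g_k}) \subset Spec(A_h)$ coincides, as an open subset of $X$, with $D(g_k) \cap D(h) = D(g_k)$, and hence $(A_h)_{\overline{g_k}} \cong (A_i)_{g_k}$ is absolutely integrally closed. Since the $D(\overline{g_k})$ cover $Spec(A_h)$, the $\overline{g_k}$ generate the unit ideal in $A_h$, so Lemma \ref{affine-locality-absolutely-integrally-closed-domains}(b) yields that $A_h$ is absolutely integrally closed. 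Quasi-compactness of $Spec(A)$ then furnishes finitely many such $h_1, \ldots, h_n$ generating the unit ideal in $A$ with each $A_{h_j}$ absolutely integrally closed, and a last application of the locality lemma concludes.

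The main obstacle I expect is the bookkeeping in the previous paragraph: there is no canonical ring map between $A$ and $A_i$ directly, so every comparison has to be routed through the structure sheaf of $X$ restricted to open subsets of $Spec(A) \cap Spec(A_i)$. The integrality of $X$ is what keeps this routing clean, since all coordinate rings in sight then embed into the common function field and the relevant identifications become equalities of subrings rather than abstract isomorphisms that must be tracked separately.
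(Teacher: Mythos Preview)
Your argument is correct and follows essentially the same strategy as the paper: reduce to Lemma~\ref{affine-locality-absolutely-integrally-closed-domains}(b) by covering $Spec(A)$ with finitely many distinguished opens whose coordinate rings inherit absolute integral closedness from the $A_i$ via Lemma~\ref{absolutely-integrally-closed-domains-preserved-localization}. The only cosmetic difference is that the paper invokes in one step the standard fact that $Spec(A)\cap Spec(A_i)$ admits a basis of opens simultaneously distinguished in both affines, whereas you recover this through a nested second application of Lemma~\ref{affine-locality-absolutely-integrally-closed-domains}(b) inside each $D(h)$.
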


\begin{proof}
The implication $\Rightarrow$ is clear. Now suppose $\{Spec(A_i): i \in I \}$ is an affine open cover of $X$ such that $A_i$ is an absolutely integrally closed domain for each $i \in I$. Let $Spec(A)$ be a nonempty affine open subset of $X$. Since $Spec(A)$ is quasicompact, there exists $i_1,...,i_n \in I$ such that $Spec(A) = (Spec(A) \cap Spec(A_{i_1})) \cup ... \cup (Spec(A) \cap Spec(A_{i_n}))$, where each $Spec(A) \cap Spec(A_{i_j}) \neq \emptyset$. Now, for all $j \in \{1,...,n\}$, $Spec(A) \cap Spec(A_{i_j})$ is a union of nonempty distinguished open subschemes, that are subsequently distinguished open subschemes of $Spec(A)$ and $Spec(A_{i_j})$. By \ref{affine-locality-absolutely-integrally-closed-domains}(a), the ring of global sections of each of these distinguished open subschemes is an absolutely integrally closed domain (because the distinguished open subschemes are distinguished opens of $Spec(A_{i_j})$). Again, since $Spec(A)$ is quasicompact, it can be expressed as a finite union of these distinguished open subschemes, say $Spec(A) = D({f_1}) \cup ... \cup D(f_m)$, where each $f_k \in A$ and $A_{f_k}$ is an absolutely integrally closed domain. Since $A$ is a domain, by \ref{affine-locality-absolutely-integrally-closed-domains}(b), $A$ is absolutely integrally closed.
\end{proof}

\begin{lemma} 
\label{absolutely-integrally-closed-scheme}
Let $X$ be an integral scheme. Then the following are equivalent:
\begin{enumerate}
\item[(a)] For all open $U \subset X$, $\mathcal{O}_X(U)$ is an absolutely integrally closed domain.
\item[(b)] For all affine open $Spec(A) \subset X$, $\mathcal{O}_X(Spec(A)) = A$ is an absolutely integrally closed domain.
\end{enumerate}
\end{lemma}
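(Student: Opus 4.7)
The plan is to prove (a) $\Leftrightarrow$ (b) by noting that (a) $\Rightarrow$ (b) is immediate (every affine open is in particular an open), and then reducing (b) $\Rightarrow$ (a) to the arbitrary-intersection clause of Lemma \ref{intersections-absolutely-integrally-closed-domains}. Throughout I will exploit that $X$ is integral, so its function field $K$ (the stalk at the generic point) is a field, and for every nonempty open $V \subset X$ the restriction map $\mathcal{O}_X(V) \to K$ is an injection of rings.

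For (b) $\Rightarrow$ (a), fix a nonempty open $U \subset X$ and choose an affine open cover $U = \bigcup_{i \in I} Spec(A_i)$. Each $A_i$ is identified with a subring of $K$ via the embedding just described, and by hypothesis each $A_i$ is an absolutely integrally closed domain. The first step is to establish the identification $\mathcal{O}_X(U) = \bigcap_{i \in I} A_i$ as subrings of $K$. This is the standard consequence of the sheaf axiom combined with the injectivity of $\mathcal{O}_X(W) \hookrightarrow K$ for nonempty open $W$: any section over $U$ restricts into each $A_i$, and conversely any family of elements of $K$ that lies in every $A_i$ defines compatible sections over the cover and hence glues uniquely to a section of $\mathcal{O}_X(U)$.

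Once this identification is in place, the proof is one line: Lemma \ref{intersections-absolutely-integrally-closed-domains} applies to arbitrary intersections of absolutely integrally closed subrings of a field, so $\bigcap_{i \in I} A_i$ is an absolutely integrally closed domain, and therefore so is $\mathcal{O}_X(U)$. The only verification with any real content is the sheaf-theoretic identification $\mathcal{O}_X(U) = \bigcap_i A_i$, which I expect to be the main (though entirely routine) obstacle; the substance of the lemma is simply that the intersection lemma \ref{intersections-absolutely-integrally-closed-domains} globalizes along the structure sheaf of an integral scheme.
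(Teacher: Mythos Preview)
Your proof is correct and follows essentially the same strategy as the paper: embed everything in the function field $K=\mathcal{O}_{X,\eta}$, express $\mathcal{O}_X(U)$ as an intersection of absolutely integrally closed subrings of $K$, and invoke Lemma~\ref{intersections-absolutely-integrally-closed-domains}. The only cosmetic difference is that the paper intersects over the stalks $\mathcal{O}_{X,x}$ for $x\in U$ (citing \cite{Liu}, 2.4.18(c) for $\mathcal{O}_X(U)=\bigcap_{x\in U}\mathcal{O}_{X,x}$ and using Lemma~\ref{absolutely-integrally-closed-domains-preserved-localization} to see each stalk is absolutely integrally closed), whereas you intersect over the rings $A_i$ of an affine cover; both realizations of $\mathcal{O}_X(U)$ as an intersection are standard for integral schemes and lead to the same one-line conclusion.
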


\begin{proof}
That (a) $\Rightarrow$ (b) is clear. Hence, assume (b). Let $\eta$ be the generic point of $X$. For all $x \in X$, by choosing an affine open neighborhood $Spec(A_x)$ of $x$, it is clear that the stalk $\mathcal{O}_{X,x}$ is an absolutely integrally closed domain, with fraction field isomorphic to $\mathcal{O}_{X,\eta}$, which is thus an algebraically closed field. Then by [\cite{Liu}, 2.4.18], for each $x \in X$ and open $U \subset X$, $\mathcal{O}_{X,x}$ and $\mathcal{O}_X(U)$ can be identified as a subrings of $\mathcal{O}_{X,\eta}$. After this identification is made, one can see that $\mathcal{O}_X(U) = \bigcap_{x \in U} \mathcal{O}_{X,x}$ [\cite{Liu}, 2.4.18(c)]. It follows by \ref{intersections-absolutely-integrally-closed-domains} that $\mathcal{O}_X(U)$ is an absolutely integrally closed domain.
\end{proof}

\begin{lemma} 
Let $R$ be an absolutely integrally closed ring, $D$ a domain, and $\varphi: R \rightarrow D$ a ring map. If $S \subset R$ is a multiplicative subset such that $S \cap \ker(\varphi) = \emptyset$, then $im(\varphi^*)$ is an absolutely integrally closed domain, where $\varphi^*:S^{-1}R \rightarrow S^{-1}D$ is the induced map.
\end{lemma}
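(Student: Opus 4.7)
The plan is to reduce the claim to the two previous lemmas on quotients and localizations by recognizing $\operatorname{im}(\varphi^*)$ as a localization of $\operatorname{im}(\varphi)$. First, I would apply Lemma \ref{image-absolutely-integrally-closed-ring} to $\varphi: R \to D$ to conclude that $\operatorname{im}(\varphi) \subset D$ is an absolutely integrally closed domain.

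Next, I would unwind what $\varphi^*$ does: the induced map $\varphi^* \colon S^{-1}R \to S^{-1}D = \varphi(S)^{-1}D$ sends $r/s$ to $\varphi(r)/\varphi(s)$. Therefore
\[
\operatorname{im}(\varphi^*) \;=\; \bigl\{\varphi(r)/\varphi(s) : r \in R,\ s \in S\bigr\} \;=\; \varphi(S)^{-1}\operatorname{im}(\varphi),
\]
viewed as a subring of $\varphi(S)^{-1}D$. The hypothesis $S \cap \ker(\varphi) = \emptyset$ is exactly the statement that $0 \notin \varphi(S)$, so $\varphi(S)^{-1}D$ is a nonzero localization of the domain $D$ (hence itself a domain by \ref{localization-integral-domains}), and $\operatorname{im}(\varphi^*)$ is a subring of it.

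Finally, applying Lemma \ref{absolutely-integrally-closed-domains-preserved-localization} to the absolutely integrally closed domain $\operatorname{im}(\varphi)$ and the multiplicative subset $\varphi(S)$, which avoids $0$, we conclude that $\varphi(S)^{-1}\operatorname{im}(\varphi) = \operatorname{im}(\varphi^*)$ is an absolutely integrally closed domain, completing the proof.

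There is no real obstacle here; the only thing to verify carefully is the identification of $\operatorname{im}(\varphi^*)$ with $\varphi(S)^{-1}\operatorname{im}(\varphi)$ and the fact that $0 \notin \varphi(S)$, both of which are immediate from the definitions and the hypothesis $S \cap \ker(\varphi) = \emptyset$.
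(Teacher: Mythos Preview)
Your argument is correct. The paper's proof proceeds in the reverse order: it first localizes, using \ref{absolutely-integrally-closed-ring-preserved-localization} to see that $S^{-1}R$ is an absolutely integrally closed ring, observes that $\ker(\varphi^*) = S^{-1}\ker(\varphi)$ is prime in $S^{-1}R$ (since $S \cap \ker(\varphi) = \emptyset$), and then applies \ref{image-absolutely-integrally-closed-ring} directly to $\varphi^*$. In other words, the paper does ``localize, then take the image'', while you do ``take the image, then localize''. Both routes use the same two ingredients (images and localizations preserve the property) and the equivalence reflects the standard fact that localization commutes with quotients. Your version has the small advantage of only invoking the domain case \ref{absolutely-integrally-closed-domains-preserved-localization} rather than the product case \ref{absolutely-integrally-closed-ring-preserved-localization}; the paper's version has the advantage of not needing to explicitly identify $\operatorname{im}(\varphi^*)$ with $\varphi(S)^{-1}\operatorname{im}(\varphi)$.
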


\begin{proof}
By \ref{absolutely-integrally-closed-domains-preserved-localization} and \ref{absolutely-integrally-closed-ring-preserved-localization}, $S^{-1}R$, $S^{-1}D$ are absolutely integrally closed. We have $ker(\varphi^{*}) = S^{-1}(ker(\varphi))$, and the latter is a prime ideal in $S^{-1}R$ because $S \cap ker(\varphi) = \emptyset$. Now apply \ref{image-absolutely-integrally-closed-ring}.
\end{proof}

\begin{lemma} 
Let $R$ be an integrally closed ring, $D$ a domain, and $\varphi:R \rightarrow D$ an integral ring map. If $S \subset R$ is a multiplicative subset such that $S \cap ker(\varphi) = \emptyset$, the $\varphi^*:S^{-1}R \rightarrow S^{-1}D$ is surjective, and $S^{-1}D$ is absolutely integrally closed.
\end{lemma}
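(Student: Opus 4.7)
The plan is to reduce the statement directly to Lemma~\ref{integral-ring-map-absolutely-integrally-closed-ring-surjective} applied to the localized map $\varphi^{*}$. I read the hypothesis ``integrally closed'' as ``absolutely integrally closed,'' since the conclusion that $S^{-1}D$ is absolutely integrally closed cannot be drawn otherwise; with that correction the result is essentially a localized repackaging of \ref{integral-ring-map-absolutely-integrally-closed-ring-surjective}.

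First I would observe that $\ker(\varphi)$ is a prime ideal of $R$ (since $D$ is a domain) and hence contains $0$, so the hypothesis $S \cap \ker(\varphi) = \emptyset$ automatically gives $0 \notin S$. By Lemma~\ref{absolutely-integrally-closed-ring-preserved-localization} this means $S^{-1}R$ is absolutely integrally closed. Next, $\varphi(S)$ is a multiplicative subset of $D$ not containing $0$ (again because $S \cap \ker(\varphi) = \emptyset$), so $S^{-1}D := \varphi(S)^{-1}D$ is a nonzero localization of the domain $D$, hence itself a domain. Then I would verify the standard fact that $\varphi^{*} \colon S^{-1}R \to S^{-1}D$ is integral: given $d/\varphi(s) \in S^{-1}D$ and an integral dependence $d^{n} + \varphi(r_{n-1})d^{n-1} + \cdots + \varphi(r_{0}) = 0$ in $D$, the element $d/\varphi(s)$ satisfies the monic polynomial with coefficients $r_{n-i}/s^{i}$ pulled across by $\varphi^{*}$. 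With $\varphi^{*}$ integral, its source $S^{-1}R$ absolutely integrally closed, and its target $S^{-1}D$ a domain, Lemma~\ref{integral-ring-map-absolutely-integrally-closed-ring-surjective} delivers both conclusions at once: $\varphi^{*}$ is surjective and $S^{-1}D$ is absolutely integrally closed.

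There is no real obstacle here; the only points demanding care are the deduction that $0 \notin S$ (so that \ref{absolutely-integrally-closed-ring-preserved-localization} is applicable) and the routine check that localization preserves integrality. Everything else is an invocation of the two preceding lemmas.
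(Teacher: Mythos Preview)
Your proposal is correct and follows essentially the same approach as the paper: verify that $\varphi^{*}$ is integral and then apply Lemma~\ref{integral-ring-map-absolutely-integrally-closed-ring-surjective}. The paper's proof is a one-liner (``The ring map $\varphi^*:S^{-1}R \rightarrow S^{-1}D$ is integral. Now apply \ref{integral-ring-map-absolutely-integrally-closed-ring-surjective}.''), leaving implicit exactly the checks you spell out---that $0 \notin S$ so $S^{-1}R$ is absolutely integrally closed, and that $S^{-1}D$ is a domain---and your reading of ``integrally closed'' as ``absolutely integrally closed'' is the intended one.
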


\begin{proof}
The ring map $\varphi^*:S^{-1}R \rightarrow S^{-1}D$ is integral. Now apply \ref{integral-ring-map-absolutely-integrally-closed-ring-surjective}.
\end{proof}

\section{Quadratically Closed Rings and $n$-Adically Closed Rings}
\label{Quadratically Closed Rings and $n$-Adically Closed Rings}

\begin{definition} 
\label{def-quadratically-closed-ring}
A ring $R$ is \textbf{quadratically closed} if every monic quadratic polynomial in $R[x]$ has a root in $R$.
\end{definition}

\noindent
A simple consequence of this definition is that a ring $R$ is quadratically closed if and only if every monic quadratic polynomial splits into linear factors in $R[x]$. We will not bother to write this as a separate lemma. Note also that any absolutely integrally closed ring is quadratically closed. 

\begin{lemma} 
\label{quadratically-closed-rings-and-quotients}
Any quotient of a quadratically closed ring is quadratically closed. As a result, the homomorphic image of a quadratically closed ring is quadratically closed.
\end{lemma}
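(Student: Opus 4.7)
The plan is to mimic the argument of Lemma \ref{quotient-absolutely-integrally-closed-domain-by-prime}, but in the much simpler setting where only quadratic polynomials need to be considered, and where no primality of the ideal being quotiented out is required. Let $R$ be quadratically closed and $I \subset R$ an ideal, and denote the canonical projection by $a \mapsto \overline{a}$. Given a monic quadratic $\overline{f}(x) = x^2 + \overline{a}x + \overline{b} \in (R/I)[x]$, I would lift it to $f(x) = x^2 + ax + b \in R[x]$ by choosing any preimages $a, b \in R$ of $\overline{a}, \overline{b}$. Since $R$ is quadratically closed, there exists $\alpha \in R$ with $f(\alpha) = 0$; reducing modulo $I$ gives $\overline{f}(\overline{\alpha}) = \overline{f(\alpha)} = 0$, so $\overline{\alpha}$ is a root of $\overline{f}$ in $R/I$.

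For the second assertion, if $\varphi: R \to S$ is any ring homomorphism, then the first isomorphism theorem gives $\operatorname{im}(\varphi) \cong R/\ker(\varphi)$, which is a quotient of a quadratically closed ring, hence quadratically closed by the previous paragraph.

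There is no real obstacle here; the only point worth flagging is that, unlike in Lemma \ref{quotient-absolutely-integrally-closed-domain-by-prime}, we do not need $I$ to be prime, because quadratic closedness is defined by the existence of \emph{one} root rather than by a full factorization whose roots all need to lie in the ring. Still, as remarked after Definition \ref{def-quadratically-closed-ring}, having a single root is equivalent to full linear factorization for monic quadratics, so once the root $\overline{\alpha}$ is produced one immediately also obtains a splitting $\overline{f}(x) = (x - \overline{\alpha})(x - \overline{\beta})$ in $(R/I)[x]$ for free.
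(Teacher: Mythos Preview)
Your proof is correct and follows exactly the approach the paper indicates: the paper explicitly says the first statement is proved ``similar to that of \ref{quotient-absolutely-integrally-closed-domain-by-prime}'' (which is precisely the lift-and-reduce argument you wrote out) and deduces the second statement via the first isomorphism theorem, just as you do. Your additional remarks about why primality of $I$ is unnecessary are accurate and helpful, though not strictly needed.
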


\begin{proof} 
The proof of the first statement is similar to that of \ref{quotient-absolutely-integrally-closed-domain-by-prime} and we omit it. For the second statement, note that the image of a quadratically closed ring under a ring map is isomorphic to a quotient of the quadratically closed ring, hence is quadratically closed by the first statement.
\end{proof}

\begin{lemma} 
\label{2-is-a-unit}
Let $R$ be a quadratically closed ring such that $2 \in R^{\times}$. Then $R$ is quadratically closed if and only if $R^2 = R$.
\end{lemma}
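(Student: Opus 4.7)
The statement as written seems to contain a typo: the first clause already says $R$ is quadratically closed, so the ``if and only if'' is vacuous. I read the intended statement as: if $R$ is a ring in which $2$ is a unit, then $R$ is quadratically closed if and only if $R^2 = R$, where $R^2$ denotes the set of squares $\{r^2 : r \in R\}$.

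For the ``only if'' direction, the plan is immediate: for any $a \in R$, the monic quadratic $x^2 - a \in R[x]$ must have a root $b \in R$ by quadratic closure, so $a = b^2 \in R^2$. Hence $R^2 = R$; note that this direction does not actually use the hypothesis that $2$ is a unit.

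For the ``if'' direction, the plan is to complete the square. Given a monic quadratic $x^2 + bx + c \in R[x]$, use that $2 \in R^\times$ to write
\begin{equation*}
x^2 + bx + c = \left(x + \tfrac{b}{2}\right)^2 - \left(\tfrac{b^2}{4} - c\right).
\end{equation*}
By the hypothesis $R^2 = R$, there exists $d \in R$ with $d^2 = \tfrac{b^2}{4} - c$. Then $x = d - \tfrac{b}{2}$ is a root in $R$, establishing that $R$ is quadratically closed.

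There is no real obstacle in this argument; it is essentially the usual quadratic formula. The only point worth flagging is that the invertibility of $2$ is genuinely needed for the ``if'' direction (without it, completing the square is impossible, as witnessed by $\mathbb{F}_2$, where every element is a square yet $x^2 + x + 1$ has no root), whereas the ``only if'' direction is formal.
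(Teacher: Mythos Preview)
Your proposal is correct and follows essentially the same approach as the paper: both directions are handled exactly as you describe, with the ``only if'' direction using $x^2 - r$ and the ``if'' direction completing the square via $2 \in R^\times$ and then taking a square root of the discriminant. You also correctly flagged the typo in the hypothesis.
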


\begin{proof}
$\Rightarrow$ For all $r \in R$, the polynomial $x^2 - r$ has a root over $R$. This shows that $R^2 = R$.\

\noindent
$\Leftarrow$ Let $x^2 + bx + c \in R[x]$. Then $x^2 + bx + c = (x + {b}/{2})^2 - ({b^2 -4c})/{4} = (x + {b}/{2})^2 - a^2$, where $a^2 =  ({b^2 -4c})/{4}$, for some $a \in R$, which exists because $R^2 = R$. Thus, $x^2 + bx + c = (x + {b}/{2} + a)(x + {b}/{2} - a)$, and we are done.
\end{proof}

\begin{lemma} 
\label{localization-quadratically-closed}
Let $R$ be a quadratically closed ring, and $S \subset R$ a multiplicative subset. Then $S^{-1}R$ is quadratically closed.
\end{lemma}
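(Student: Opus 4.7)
The plan is to reduce any monic quadratic over $S^{-1}R$ to a monic quadratic over $R$ by a suitable substitution $x = y/\sigma$ where $\sigma \in S$ is chosen to clear all denominators simultaneously. Since $R$ is quadratically closed, the cleared polynomial has a root $r \in R$, and then $r/\sigma \in S^{-1}R$ will be a root of the original polynomial.

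More concretely, take an arbitrary monic quadratic $f(x) = x^2 + cx + d \in S^{-1}R[x]$ and write $c = a/s$, $d = b/t$ with $a,b \in R$ and $s,t \in S$. Set $\sigma = st \in S$. Then in $S^{-1}R[x]$ one has
\[
\sigma^2 f(y/\sigma) \;=\; y^2 + (c\sigma)\, y + d\sigma^2 \;=\; y^2 + (at)\, y + (bst^2),
\]
and the right-hand side is the image of the monic polynomial $g(y) := y^2 + (at)y + (bst^2) \in R[y]$ under the localization map $R[y] \to S^{-1}R[y]$.

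Now apply the hypothesis: $R$ is quadratically closed, so $g$ admits a root $r \in R$, i.e.\ $g(r) = 0$ in $R$. Pushing to $S^{-1}R$ and using the identity above yields $\sigma^2 f(r/\sigma) = 0$ in $S^{-1}R$. Since $\sigma = st \in S$, the element $\sigma^2$ is a unit in $S^{-1}R$, so $f(r/\sigma) = 0$. Thus $r/\sigma \in S^{-1}R$ is a root of $f$, and by \ref{def-quadratically-closed-ring} we conclude that $S^{-1}R$ is quadratically closed.

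The proof has no real obstacle; the only content is picking the right substitution so that the ``denominator-cleared'' polynomial is genuinely monic in $R[y]$ (this is why we multiply by $\sigma^2$ rather than by $\sigma$, and why $\sigma$ must contain both $s$ and $t$ as factors). Once that is set up, quadratic closedness of $R$ does all the work.
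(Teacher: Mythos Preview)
Your argument is correct and is essentially the same as the paper's: clear denominators via the substitution $x = y/(st)$ to reduce to a monic quadratic over $R$, apply quadratic closedness of $R$, then pull the root back to $S^{-1}R$. One small arithmetic slip: $d\sigma^2 = (b/t)(st)^2 = bs^2t$, not $bst^2$; this does not affect the method.
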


\begin{proof}
Let $x^2 + ({b_1}/{s_1})x + {b_0}/{s_0} \in S^{-1}R[x]$. Then consider the polynomial $x^2 + b_1s_0x + b_0s^2_1s_0 \in R[x]$. This has a root $r \in R$. Hence, $r^2 + b_1s_0r + b_0s^2_1s_0 = 0$, which implies that  ${r^2+ b_1s_0r + b_0s^2_1s_0}/{(s_1s_0)^2} = 0$ in $S^{-1}R$. From this last equality we get that ${r}/{s_1s_0}$ is a root of $x^2 + ({b_1}/{s_1})x + {b_0}/{s_0}$. 
\end{proof}

\begin{lemma} 
\label{degree-2-extension-of-fractions-fields}
Let $A$ be a quadratically closed domain. Then $Frac(A)$ has no degree $2$ field extensions.
\end{lemma}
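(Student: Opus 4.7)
The plan is to observe that $Frac(A)$ inherits the quadratically closed property from $A$ via Lemma \ref{localization-quadratically-closed}, and then to argue that no quadratically closed field admits a degree $2$ extension at all.

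First I would set $K = Frac(A) = S^{-1}A$ for $S = A \setminus \{0\}$. Since $A$ is a quadratically closed domain, Lemma \ref{localization-quadratically-closed} applies directly to tell us that $K$ is a quadratically closed ring, and being a field it is in particular a quadratically closed field. Concretely, given any $x^2 + \beta x + \gamma \in K[x]$, one writes $\beta = b_1/s_1$ and $\gamma = b_0/s_0$ with $b_i \in A$, $s_i \in A \setminus \{0\}$, applies quadratic closure of $A$ to the cleared polynomial $x^2 + b_1 s_0 x + b_0 s_1^2 s_0 \in A[x]$ to obtain a root $r \in A$, and then $r/(s_1 s_0) \in K$ is a root of the original quadratic.

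Next I would show that a quadratically closed field $K$ admits no field extension of degree $2$. Suppose for contradiction that $L/K$ is such an extension. Pick any $\alpha \in L \setminus K$; since $[L:K] = 2$ and $\{1,\alpha\}$ is $K$-linearly independent, $\alpha$ satisfies a (necessarily irreducible) monic minimal polynomial $p(x) = x^2 + bx + c \in K[x]$ of degree $2$. But $K$ being quadratically closed means every monic quadratic in $K[x]$ already splits into linear factors over $K$; in particular $p(x)$ factors as $(x - \alpha_1)(x - \alpha_2)$ with $\alpha_i \in K$, contradicting its irreducibility. Hence no such $L$ exists.

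There is no real obstacle here; the argument is essentially a two-line reduction (localization stability plus the elementary observation that a quadratically closed field has no irreducible monic quadratics). The only point requiring a touch of care is ensuring that when we pass to $K = Frac(A)$ we really may apply Lemma \ref{localization-quadratically-closed}; this is fine because the lemma is stated for arbitrary multiplicative subsets of a quadratically closed ring, and $A \setminus \{0\}$ is such a subset in the domain $A$.
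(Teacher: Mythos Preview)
Your proposal is correct and follows exactly the same approach as the paper: apply Lemma~\ref{localization-quadratically-closed} to conclude that $Frac(A)$ is quadratically closed, and then observe that this immediately rules out degree~$2$ extensions. The paper's proof is terser (it leaves the last step implicit), but the substance is identical.
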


\begin{proof}
By \ref{localization-quadratically-closed}, $Frac(A)$, being a localization of $A$, is quadratically closed. Hence the result follows.
\end{proof}

\begin{lemma} 
\label{fibers-of-maps-to-quadratically-closed-rings}
Let $A \rightarrow B$ be a ring map such that $B$ is quadratically closed. Then the ring of global functions of a fiber of $Spec(B) \rightarrow Spec(A)$ over a point of $Spec(A)$ is quadratically closed.
\end{lemma}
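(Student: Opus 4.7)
The plan is to identify the fiber ring explicitly as an iterated localization and quotient of $B$, and then apply the two preceding lemmas (\ref{localization-quadratically-closed} and \ref{quadratically-closed-rings-and-quotients}) in succession.

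Concretely, let $\varphi: A \to B$ denote the given ring map and fix a point $p \in \mathrm{Spec}(A)$. The fiber over $p$ is by definition $\mathrm{Spec}(B \otimes_A \kappa(p))$, so its ring of global functions is the tensor product $B \otimes_A \kappa(p)$. Writing $\kappa(p) = A_p/pA_p$ and using that tensor product commutes with localization and with quotients, one obtains a canonical isomorphism
\[
B \otimes_A \kappa(p) \;\cong\; S^{-1}B \,/\, p \cdot S^{-1}B,
\]
where $S = \varphi(A \setminus p) \subset B$ is the multiplicatively closed image of $A \setminus p$.

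Now the argument is immediate. By \ref{localization-quadratically-closed}, the ring $S^{-1}B$ is quadratically closed, since $B$ is quadratically closed and $S$ is a multiplicative subset of $B$. The ring of global functions on the fiber, namely $S^{-1}B/p\cdot S^{-1}B$, is then a quotient of $S^{-1}B$, so it is quadratically closed by \ref{quadratically-closed-rings-and-quotients}.

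The only step that requires any care is the identification of the fiber ring as $S^{-1}B/p\cdot S^{-1}B$, but this is a standard computation with tensor products and is not a genuine obstacle. No part of the argument depends on $A$ or $B$ being a domain, nor on any finiteness hypothesis on $\varphi$.
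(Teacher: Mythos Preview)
Your proof is correct and follows essentially the same approach as the paper: identify the fiber ring $B \otimes_A \kappa(p)$ as a combination of a localization and a quotient of $B$, then apply \ref{quadratically-closed-rings-and-quotients} and \ref{localization-quadratically-closed}. The only cosmetic difference is the order of operations: the paper writes the fiber as $S^{-1}(B/pB)$ (quotient first, then localize), whereas you write it as $S^{-1}B/p\cdot S^{-1}B$ (localize first, then quotient); these are canonically isomorphic and the two lemmas are invoked in either case.
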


\begin{proof}
Let $p$ be a point of $Spec(A)$. Then we know that the fiber over $p$ is $Spec(B) \times_A Spec(\kappa(p)) = Spec(B \otimes_A \kappa(p)) = Spec(S^{-1}(B/pB))$, where $S = (A/p - 0)^{-1}$. The ring $S^{-1}(B/pB)$ is quadratically closed by \ref{quadratically-closed-rings-and-quotients} and \ref{localization-quadratically-closed}.
\end{proof}

\begin{remark} 
\label{rem-quadratically-closed-domains}
Since any monic quadratic polynomial in $A[x]$ with a root in $A$, splits completely into linear factors in $A$, one can easily deduce that \ref{intersections-absolutely-integrally-closed-domains} is true by replacing absolutely integrally closed domains by quadratically closed domains. As a result, \ref{affine-locality-absolutely-integrally-closed-domains} is true for absolutely integrally closed replaced by quadratically closed. Hence, we obtain analogues of \ref{absolutely-integrally-closed-integral-scheme-is-affine-local} and \ref{absolutely-integrally-closed-scheme} for integral schemes whose affine open subschemes are the spectra of quadratically closed domains:
\end{remark}

\begin{lemma} 
\label{quadratically-closed-schemes}
Let $X$ be an integral scheme. Then the following properties are equivalent:
\begin{enumerate}
\item[(a)] For all open $U \subset X$, $\mathcal{O}_X(U)$ is a quadratically closed domain.
\item[(b)] For every nonempty affine open $Spec(A) \subset X$, $A$ is quadratically closed domain. 
\item[(c)] There exists an affine open cover $\{Spec(A_i): i \in I\}$ of $X$ such that for all $i \in I$, $A_i$ is a quadratically closed domain.
\end{enumerate}
\end{lemma}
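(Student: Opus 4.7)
The plan is to mirror the proofs given for absolutely integrally closed domains in \ref{absolutely-integrally-closed-integral-scheme-is-affine-local} and \ref{absolutely-integrally-closed-scheme}, using the quadratically closed analogues of \ref{intersections-absolutely-integrally-closed-domains} and \ref{affine-locality-absolutely-integrally-closed-domains} that are pointed out in \ref{rem-quadratically-closed-domains}. The implications (a) $\Rightarrow$ (b) and (b) $\Rightarrow$ (c) are immediate: the former because any affine open of $X$ is in particular an open, and the latter by taking any affine open cover of $X$.

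For (c) $\Rightarrow$ (b), I would take a nonempty affine open $Spec(A) \subset X$, intersect it with the members of the given cover, and use quasicompactness of $Spec(A)$ to reduce to finitely many $Spec(A_{i_j})$. Each nonempty intersection $Spec(A) \cap Spec(A_{i_j})$ is covered by opens that are simultaneously distinguished in $Spec(A)$ and in $Spec(A_{i_j})$, so the quadratically closed analogue of \ref{affine-locality-absolutely-integrally-closed-domains}(a) gives that the corresponding $A_{f_k}$'s are quadratically closed domains. Another application of quasicompactness writes $Spec(A) = D(f_1) \cup \dots \cup D(f_m)$ with each $A_{f_k}$ quadratically closed, and the quadratically closed analogue of \ref{affine-locality-absolutely-integrally-closed-domains}(b) then forces $A$ itself to be a quadratically closed domain. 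This is literally the argument of \ref{absolutely-integrally-closed-integral-scheme-is-affine-local} with ``absolutely integrally closed'' replaced by ``quadratically closed''.

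For (b) $\Rightarrow$ (a), I would argue as in \ref{absolutely-integrally-closed-scheme}. Let $\eta$ be the generic point of $X$. For each $x \in X$, choose an affine open neighborhood $Spec(A_x)$ of $x$; since $\mathcal{O}_{X,x}$ is a localization of $A_x$, \ref{localization-quadratically-closed} shows that $\mathcal{O}_{X,x}$ is a quadratically closed domain, and its fraction field is $\mathcal{O}_{X,\eta}$. By [\cite{Liu}, 2.4.18], for every open $U \subset X$ the ring $\mathcal{O}_X(U)$ embeds in $\mathcal{O}_{X,\eta}$ and equals $\bigcap_{x \in U} \mathcal{O}_{X,x}$. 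Applying the quadratically closed version of \ref{intersections-absolutely-integrally-closed-domains} (valid for arbitrary intersections, as noted in \ref{rem-quadratically-closed-domains}) yields that $\mathcal{O}_X(U)$ is a quadratically closed domain.

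The only nontrivial ingredient not explicitly restated in the excerpt is the intersection lemma alluded to in \ref{rem-quadratically-closed-domains}, but it is immediate: a monic quadratic in $C[x]$, where $C$ is an intersection of quadratically closed subrings of a field $K$, splits over each such subring, and since the two roots are determined in $K$ they must lie in every subring and hence in $C$. Thus I expect no serious obstacle; the proof is a translation of the earlier affine-local and intersection arguments, facilitated entirely by the content of \ref{rem-quadratically-closed-domains}.
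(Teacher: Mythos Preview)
Your proposal is correct and is exactly the approach the paper takes: the paper does not even write out a separate proof, but simply invokes \ref{rem-quadratically-closed-domains} to assert that the arguments of \ref{absolutely-integrally-closed-integral-scheme-is-affine-local} and \ref{absolutely-integrally-closed-scheme} carry over verbatim with ``absolutely integrally closed'' replaced by ``quadratically closed''. Your write-up faithfully unpacks those details, including the intersection lemma, so there is nothing to add.
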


\noindent
For a ring $A$, it is not usually the case that the intersection of two irreducible closed subsets of $Spec(A)$ is an irreducible closed subset. However, for rings that are quadratically closed, we obtain the following surprising result whose proof is given in [\cite{Hoch-Hun}, 9.2]:

\begin{lemma} 
\label{irreducible-intersection-property-quadratically-closed-rings}
Let $R$ be a quadratically closed ring, and $p_1, p_2 \subset R$ be two prime ideals (not necessarily distinct). Then either $p_1 + p_2 = R$ or $p_1 + p_2$ is prime. Geometrically this means that two irreducible closed subsets of $Spec(R)$ are either disjoint, or their intersection is an irreducible closed subset.
\end{lemma}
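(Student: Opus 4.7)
My plan is to prove that $p_1 + p_2$ is either the unit ideal or prime, which immediately yields the geometric statement; I split this into two stages: first, that $p_1 + p_2$ is radical, and second, the upgrade to primality.

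For the radicality step, I take $r \in R$ with $r^2 \in p_1 + p_2$ and write $r^2 = x + y$ with $x \in p_1$ and $y \in p_2$. Quadratic closedness yields $u, v, i \in R$ with $u^2 = x$, $v^2 = y$, $i^2 = -1$, and since primes are radical, $u \in p_1$ and $v \in p_2$. The key identity is
\[r^2 \;=\; u^2 + v^2 \;=\; (u - iv)(u + iv).\]
Reducing modulo $p_1$ in the quadratically closed domain $R/p_1$ (in which $\bar u = 0$), this becomes $\bar r^2 = \bar v^2$, so that $(\bar r - \bar v)(\bar r + \bar v) = 0$ in a domain forces $\bar r = \pm \bar v$. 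Therefore $r \mp v \in p_1$ and, since $\mp v \in p_2$, we obtain $r \in p_1 + p_2$, proving radicality.

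For the primality step, I assume $p_1 + p_2 \neq R$ and $ab \in p_1 + p_2$; the goal is $a \in p_1 + p_2$ or $b \in p_1 + p_2$. By quadratic closedness I pick square roots $w^2 = a$ and $z^2 = b$, so $(wz)^2 = ab \in p_1 + p_2$ and radicality immediately yields $wz \in p_1 + p_2$. Writing $ab = x + y$ and factoring $ab = (u - iv)(u + iv)$ as in step~1, I would then compare the factorizations $ab = (u - iv)(u + iv) = (wz)^2$ simultaneously in the quadratically closed domains $R/p_1$ and $R/p_2$, and invoke the argument of [\cite{Hoch-Hun}, 9.2] to conclude that either $a$ or $b$ itself lies in $p_1 + p_2$. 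The hardest part is exactly this last deduction: naively iterating square-root extraction only produces elements $w_n z_n \in p_1 + p_2$ with $w_n^{2^n} = a$ and $z_n^{2^n} = b$, an infinite descent which does not, by itself, force $a$ or $b$ into $p_1 + p_2$; bridging this requires exploiting the compatibility of the two reductions modulo $p_1$ and $p_2$, which is the structural core of the Hochster-Huneke argument.
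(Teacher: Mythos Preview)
Your radicality step is correct, but your primality step has a genuine gap: you never actually prove it. Taking square roots $w^2=a$, $z^2=b$ and reducing to $wz \in p_1+p_2$ only shows that some product of square roots lies in the ideal, and as you yourself observe, iterating this is an infinite descent that never forces $a$ or $b$ into $p_1+p_2$. At that point you write that you would ``invoke the argument of [Hoch-Hun, 9.2]'' --- but that argument \emph{is} the proof of the lemma, so this is circular. The factorization $ab=(u-iv)(u+iv)$ you set up is not the right one and is never used; the introduction of $i$ with $i^2=-1$ plays no role anywhere.

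The missing idea, which is the whole content of the Hochster--Huneke proof reproduced in the paper, is to \emph{not} take square roots of $a$ and $b$ separately. Instead, given $xy \in p_1+p_2$, set $z=y-x$ so that $xy = x^2+zx = a+b$ with $a\in p_1$, $b\in p_2$, and choose $u$ a root of $T^2+zT-a$. Then $u(u+z)=a\in p_1$ gives $u\in p_1$ or $u+z\in p_1$, while $b=x^2+zx-u^2-zu=(x-u)(x+u+z)$ gives $x-u\in p_2$ or $x+u+z\in p_2$. The four resulting cases, via
\[
x=(x-u)+u=(x+u+z)-(u+z),\qquad y=(x+u+z)-u=(x-u)+(u+z),
\]
each place $x$ or $y$ in $p_1+p_2$ directly. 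Note that this argument establishes primality in one stroke and makes your separate radicality step unnecessary; the crucial move is solving a quadratic with linear term $zT$, not a pure square, so that the factorization interacts with \emph{both} $x$ and $y$ simultaneously.
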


\begin{proof}
If $p_1 + p_2 = R$, then we are done. So assume $p_1 + p_2 \neq R$. Let $x, y \in R$ such that $xy \in R$. Let $z = y - x$. Then $x^2 + zx = xy = a + b$ where $a \in p_1$ and $b \in p_2$. Consider the polynomial $T^2 + zT - a \in R[T]$. Since $R$ is quadratically closed, this polynomial has a root, say $u$. Thus, $u^2 + zu = a \in p_1$. Since $p_1$ is prime, this means that $u \in p_1$ or $u + z \in p_1$. Now, $b = x^2 + zx - a = x^2 + zx - u^2 - zu = (x - u)(x + u + z) \in p_2$. Thus, $x - u \in p_2$ or $x + u + z \in p_2$. Then we have
\begin{equation}
x = (x - u) + u = (x + u + z) - (u + z)
\end{equation}
\begin{equation}
y = (x+ u + z) - u = (x - u) + (u + z)
\end{equation}   

\noindent
Hence we get $x \in p_1 + p_2$ or $y \in p_1 + p_2$.
\end{proof}

\noindent
In fact, we can prove a more general version of the previous lemma. But first, we need a definition.

\begin{definition} 
\label{def-n-adically-closed}
For $n \in \mathbb{N}$, a ring $R$ is \textbf{$n$-adically closed} if every monic polynomial of degree $n$ with coefficients in $R$ has a root.
\end{definition}

\noindent
Thus, a $2$-adically closed ring is just a quadratically closed ring.

\begin{remark} 
\label{rem-explanation-of-previous-definition}
Note that the above definition does not imply that any monic polynomial of degree $n$ can be completely factored into a product of linear terms. It just says that every monic polynomial of degree $n$ is guaranteed to have at least one root. Also note that the definition is not redundant in the sense that there are $n$-adically closed rings, for $n \geq 3$, which are not quadratically closed. For example, $\mathbb{R}$ is $n$-adically closed for every odd $n$, but is not quadratically closed.
\end{remark}

\begin{remark} 
\label{subrings-of-n-adically-closed-rings}
Subrings of $n$-adically closed rings are not $n$-adically closed. Again, $\mathbb{R}$ is $n$-adically closed for every odd $n$, but $\mathbb{Q}$ is not $n$-adically closed for any $n$.
\end{remark}

\begin{lemma} 
\label{quotients-localizations-products-of-n-adically-closed-rings}
The property of a ring being $n$-adically closed is preserved under quotients, localizations and arbitrary products.
\end{lemma}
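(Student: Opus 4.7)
The three claims are essentially independent, so I would handle them in turn, in the order they are stated.

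For quotients, the plan mirrors the argument already used for quadratically closed rings in Lemma~\ref{quadratically-closed-rings-and-quotients}: given a quotient $R \twoheadrightarrow R/I$ and a monic polynomial $\bar f(x) = x^n + \bar a_{n-1}x^{n-1} + \cdots + \bar a_0 \in (R/I)[x]$, lift each $\bar a_i$ to some $a_i \in R$, form the monic polynomial $f(x) = x^n + a_{n-1}x^{n-1} + \cdots + a_0 \in R[x]$, and use $n$-adic closure of $R$ to find a root $r \in R$. Its image $\bar r \in R/I$ is then a root of $\bar f$.

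For localizations $S^{-1}R$, I would generalize the substitution trick of Lemma~\ref{localization-quadratically-closed}. Given a monic polynomial $f(y) = y^n + c_{n-1} y^{n-1} + \cdots + c_0 \in S^{-1}R[y]$, write each $c_i = b_i/s_i$ and set $T = s_0 s_1 \cdots s_{n-1} \in S$. Multiplying $f(y)$ through by $T^n$ and substituting $y = x/T$ gives
\[
g(x) \;=\; x^n + T c_{n-1}\, x^{n-1} + T^2 c_{n-2}\, x^{n-2} + \cdots + T^n c_0 .
\]
Because $s_i$ divides $T$ and the exponent $n-i$ is at least $1$ for $i < n$, each coefficient $T^{n-i}c_i$ is actually in $R$, so $g(x) \in R[x]$ is monic of degree $n$. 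By hypothesis $g$ has a root $r \in R$; then $r/T \in S^{-1}R$ is a root of $f$, since $T^n f(r/T) = g(r) = 0$ and $T$ is a unit in $S^{-1}R$.

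For arbitrary products, let $\{R_\lambda\}_{\lambda \in \Lambda}$ be a family of $n$-adically closed rings and let $f(x) = x^n + c_{n-1} x^{n-1} + \cdots + c_0 \in (\prod_\lambda R_\lambda)[x]$ be monic. Writing $c_i = (c_{i,\lambda})_{\lambda \in \Lambda}$, the coordinate polynomials $f_\lambda(x) = x^n + c_{n-1,\lambda} x^{n-1} + \cdots + c_{0,\lambda} \in R_\lambda[x]$ are each monic of degree $n$. Pick a root $r_\lambda \in R_\lambda$ of $f_\lambda$ for each $\lambda$; then $(r_\lambda)_{\lambda \in \Lambda} \in \prod_\lambda R_\lambda$ is a root of $f$.

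No step looks like a genuine obstacle: quotients and products are formal, and the localization case reduces to a single substitution plus one divisibility check on the exponents. The only place I would want to be careful is confirming that the substituted polynomial $g(x)$ really has coefficients in $R$, which requires $n - i \geq 1$ for every $i < n$; this is the reason the trick clears denominators uniformly and is the natural generalization of the $n=2$ case proved earlier.
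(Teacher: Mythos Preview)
Your proposal is correct and follows essentially the same approach as the paper. The paper likewise dismisses quotients and products as clear and concentrates on localization, where it writes down explicit lifts $\beta_{n-i} = r_{n-i}\,s_{n-i}^{\,i-1}\prod_{j\ne n-i}s_j^{\,i} \in R$ and verifies $\beta_{n-i}/T^i = c_{n-i}$ in $S^{-1}R$ with $T = s_0\cdots s_{n-1}$; this is precisely your substitution $y = x/T$ unpacked coefficient by coefficient, and your root $r/T$ is the paper's $\alpha/(s_{n-1}\cdots s_0)$.
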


\begin{proof}
That the property of a ring being $n$-adically closed is preserved under quotients and arbitrary products is quite clear. Let us show that this property is preserved under localization. So, we let $R$ be an $n$-adically closed ring, and $S \subset R$ be a multiplicative subet. Let $X^n + (r_{n-1}/s_{n-1})X^{n-1} + \cdots + (r_1/s_1)X + r_0/s_0$ be a monic polynomial of degree $n$ with coefficients in $S^{-1}R$. For $i = 1, \cdots, n-1$, let $\beta_{n-i} = r_{n-i}s^{i-1}_{n-i}(\prod_{j \neq n-i}s^{i}_j)$. Observe that\
\begin{equation}
\label{equation}
\beta_{n-i}/(s_{n-1}\cdots s_0)^n = (r_{n-i}/s_{n-i})(1/(s_{n-1}\cdots s_0)^{n-i}). 
\end{equation}
Now the monic polynomial $X^n + \beta_{n-i}X^{n-1} + \cdots + \beta_1X + \beta_0$ has a root $\alpha$ in $R$. Using \ref{equation}, it is then easy to see that $\alpha/s_{n-1}\cdots s_0$ is a root of $X^n + (r_{n-1}/s_{n-1})X^{n-1} + \cdots + (r_1/s_1)X + r_0/s_0$.
\end{proof}

\begin{lemma} 
\label{when-a-subring-of-n-adically-closed-ring-is-n-adically-closed}
Let $R$ be an $n$-adically closed ring, and $A$ a subring of $R$ such that $A$ is integrally closed in $R$. Then $R$ is $n$-adically closed.
\end{lemma}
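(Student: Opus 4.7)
The plan is to follow the same elementary strategy used in Lemma~\ref{when-subring-is-absolutely-integrally-closed}. (I note in passing that the conclusion as displayed reads ``$R$ is $n$-adically closed'', which is tautological given the hypothesis; the evidently intended conclusion, which I will prove, is that $A$ is $n$-adically closed.) Let $f(x) \in A[x]$ be an arbitrary monic polynomial of degree $n$. Via the inclusion $A \hookrightarrow R$, I view $f(x)$ as a monic polynomial of degree $n$ in $R[x]$. Since $R$ is $n$-adically closed, there exists some $\alpha \in R$ with $f(\alpha) = 0$.

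Now, by the very construction of $\alpha$, it is a root of a monic polynomial with coefficients in $A$, so $\alpha$ is integral over $A$. Because $A$ is integrally closed in $R$ by hypothesis, it follows that $\alpha \in A$. Hence $f$ has a root in $A$, and since $f$ was an arbitrary monic polynomial of degree $n$ in $A[x]$, the ring $A$ is $n$-adically closed in the sense of Definition~\ref{def-n-adically-closed}.

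There is essentially no obstacle to carrying out this argument: it is a direct transcription of the absolutely-integrally-closed analogue, and in particular it does not require $R$ or $A$ to be a domain, nor does it use anything about degrees other than $n$. The only point worth making explicit is the (standard) reading of ``$A$ is integrally closed in $R$'', namely that every element of $R$ which is integral over $A$ already lies in $A$; this is exactly what is needed to pull the root $\alpha$ back from $R$ to $A$.
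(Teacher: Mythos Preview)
Your proof is correct and follows essentially the same approach as the paper's own proof: take a monic degree-$n$ polynomial in $A[x]$, find a root in $R$ using that $R$ is $n$-adically closed, and pull it back to $A$ via integral closedness. You also correctly flag the typo in the statement (the intended conclusion is that $A$, not $R$, is $n$-adically closed).
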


\begin{proof}
Let $f(x) \in A[x]$ be a monic polynomial of degree $n$. Then $f(x) \in R[x]$, and so $f(x)$ has a root $r \in R$. Then $r$ is integral over $A$, and since $A$ is integrally closed in $R$, we must have $r \in A$.
\end{proof}

\begin{lemma} 
\label{$n$-adically-closed-normal-schemes}
Let $X$ be an irreducible, normal scheme, with generic point $\eta$. Then the following are equivalent:
\begin{enumerate}
\item[(i)] $\mathcal{O}_{X,\eta}$ is $n$-adically closed.
\item[(ii)] For all $x \in X$, $\mathcal{O}_{X,x}$ is $n$-adically closed.
\item[(iii)] For all open $U \subset X$, $\mathcal{O}_X(U)$ is $n$-adically closed.
\item[(iv)] For all affine open subsets $Spec(A) \subset X$, $A$ is $n$-adically closed.
\item[(v)] There exists an affine open cover $\{Spec(A_i)\}_{i\in I}$ of $X$ such that for all $i \in I$, $A_i$ is $n$-adically closed.
\end{enumerate}
\end{lemma}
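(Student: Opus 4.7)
The plan is to establish the cycle $(v) \Rightarrow (i) \Rightarrow (iii) \Rightarrow (iv) \Rightarrow (v)$ together with the equivalence $(i) \Leftrightarrow (ii)$. The implications $(iii) \Rightarrow (iv) \Rightarrow (v)$ are immediate, by restricting first to affine opens and then to any chosen affine open cover, and $(ii) \Rightarrow (i)$ is trivial since $\eta \in X$.

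For $(v) \Rightarrow (i)$, I would use that the generic point of an irreducible scheme lies in every nonempty open, so $\eta \in Spec(A_i)$ for each $i$ and $\mathcal{O}_{X,\eta}$ is a localization of the $n$-adically closed ring $A_i$; \ref{quotients-localizations-products-of-n-adically-closed-rings} then yields $(i)$. For $(i) \Rightarrow (ii)$, the normality hypothesis is crucial: since $X$ is irreducible and normal (in particular integral), each stalk $\mathcal{O}_{X,x}$ is a normal local domain whose fraction field is $\mathcal{O}_{X,\eta}$, so $\mathcal{O}_{X,x}$ is integrally closed in $\mathcal{O}_{X,\eta}$. Applying \ref{when-a-subring-of-n-adically-closed-ring-is-n-adically-closed} (whose proof shows, despite the statement, that it is the subring integrally closed in an $n$-adically closed ring that inherits the property) gives that $\mathcal{O}_{X,x}$ is $n$-adically closed.

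The only step requiring real work is $(i) \Rightarrow (iii)$, and it adapts the template of \ref{absolutely-integrally-closed-scheme}. By [\cite{Liu}, 2.4.18(c)], for any open $U \subset X$ one has $\mathcal{O}_X(U) = \bigcap_{x \in U} \mathcal{O}_{X,x}$ inside $\mathcal{O}_{X,\eta}$. Given a monic $f \in \mathcal{O}_X(U)[T]$ of degree $n$, hypothesis $(i)$ supplies a root $r \in \mathcal{O}_{X,\eta}$. For each $x \in U$, the element $r$ is integral over $\mathcal{O}_X(U) \subseteq \mathcal{O}_{X,x}$, and since $\mathcal{O}_{X,x}$ is integrally closed in $\mathcal{O}_{X,\eta}$ we obtain $r \in \mathcal{O}_{X,x}$; intersecting over $x \in U$ places $r$ in $\mathcal{O}_X(U)$.

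The main conceptual point that has to be navigated is that, unlike the absolutely integrally closed case (Lemma \ref{intersections-absolutely-integrally-closed-domains}), an intersection of merely $n$-adically closed subrings of a common field should not be expected to be $n$-adically closed: a monic polynomial of degree $n$ need only have some root in each subring, and those roots may well differ. The normality of $X$ is precisely what bypasses this difficulty, since it forces the single root of $f$ produced in $\mathcal{O}_{X,\eta}$ to descend simultaneously into every stalk $\mathcal{O}_{X,x}$, and hence into $\mathcal{O}_X(U)$.
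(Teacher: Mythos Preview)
Your proof is correct and follows essentially the same approach as the paper: the paper proves the chain $(i) \Rightarrow (ii) \Rightarrow (iii) \Rightarrow (iv) \Rightarrow (v) \Rightarrow (i)$, whereas you go $(i) \Rightarrow (iii)$ directly, but your argument for $(i) \Rightarrow (iii)$ (descending a root from $\mathcal{O}_{X,\eta}$ into each normal stalk $\mathcal{O}_{X,x}$ and hence into $\mathcal{O}_X(U) = \bigcap_{x\in U}\mathcal{O}_{X,x}$) is exactly the paper's argument for $(ii) \Rightarrow (iii)$, just with the normality-of-$\mathcal{O}_X(U)$ step inlined rather than stated separately. Your parenthetical observation about the typo in the conclusion of Lemma~\ref{when-a-subring-of-n-adically-closed-ring-is-n-adically-closed} is also correct.
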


\begin{proof}
It is clear that (iii) $\Rightarrow$ (iv) $\Rightarrow$ (v). Now assume (i). Since $X$ is normal, it is reduced, hence integral (because $X$ is irreducible by hypothesis). Let $x \in X$, and $Spec(A)$ be an affine open subset containing $x = [p]$. Then it is easy to see that $Frac(A) = \mathcal{O}_{X,\eta}$ and $\mathcal{O}_{X,x} = A_p$. Thus, $Frac(\mathcal{O}_{X,x}) = \mathcal{O}_{X,\eta}$. Since $\mathcal{O}_{X,x}$ is a normal domain by hypothesis and $\mathcal{O}_{X,\eta}$ is $n$-adically closed, it follows by \ref{when-a-subring-of-n-adically-closed-ring-is-n-adically-closed} that $\mathcal{O}_{X,x}$ is $n$-adically closed. Thus, (i) $\Rightarrow$ (ii). Now assume (ii). Then $\mathcal{O}_{X,\eta}$ is $n$-adically closed. If $U = \emptyset$, then (iii) is trivially true. Suppose that $U \neq \emptyset$. Since $X$ is an integral scheme, we have $\mathcal{O}_X(U) = \bigcap_{x \in U} \mathcal{O}_{X,x}$. We also know that $Frac(\mathcal{O}_X(U)) = \mathcal{O}_{X,\eta}$ [\cite{Liu}, 2.4.18(a)], and that for all $x \in U$, we have natural inclusions $\mathcal{O}_X(U) \hookrightarrow \mathcal{O}_{X,x} \hookrightarrow \mathcal{O}_{X,\eta}$. Hence, if $\alpha \in \mathcal{O}_{X, \eta}$ is integral over $\mathcal{O}_X(U)$, then for all $x \in U$, $\alpha$ is integral over $\mathcal{O}_{X,x}$. Since each $\mathcal{O}_{X,x}$ is a normal domain by hypothesis, it follows that $\alpha \in \mathcal{O}_{X,x}$ for all $x \in U$. In particular, $\alpha \in \bigcap_{x \in U} \mathcal{O}_{X,x} = \mathcal{O}_X(U)$. This shows that $\mathcal{O}_X(U)$ is a normal domain. It follows by \ref{when-a-subring-of-n-adically-closed-ring-is-n-adically-closed} that $\mathcal{O}_X(U)$ is $n$-adically closed when $U \neq \emptyset$. This establishes that (ii) $\Rightarrow$ (iii). It suffices to prove that (v) $\Rightarrow$ (i). Well, it is clear that $\mathcal{O}_{X,\eta} = Frac(A_i)$ for any $i$, and the latter is $n$-adically closed by \ref{quotients-localizations-products-of-n-adically-closed-rings}.
\end{proof}

\begin{lemma} 
\label{sufficient-condition-for-radical-ideal}
Let $R$ be a ring, and $I$ an ideal of $R$. Suppose there exists a natural number $n$ greater than 1 such that for all $x \in I$, if $x^n \in I$ then $x \in I$. Then $I$ is a radical ideal.
\end{lemma}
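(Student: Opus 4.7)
The statement almost certainly contains a typo: since $x \in I$ trivially forces $x^n \in I$, the intended hypothesis must be that for every $x \in R$ (not just $x \in I$) the implication $x^n \in I \Rightarrow x \in I$ holds. I will proceed under this reading and show that $I$ is radical, i.e., that $x^k \in I$ for some $k \geq 1$ forces $x \in I$.

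The plan is to bootstrap the single-step implication $x^n \in I \Rightarrow x \in I$ into an arbitrary-step implication $x^{n^j} \in I \Rightarrow x \in I$ by iteration, and then absorb an arbitrary exponent $k$ into $n^j$ by multiplying by a suitable power of $x$. Concretely, suppose $x \in R$ and $x^k \in I$ for some $k \geq 1$. First I would choose $j \in \mathbb{N}$ large enough that $n^j \geq k$; this is possible because $n > 1$. Then $x^{n^j} = x^k \cdot x^{n^j - k} \in I$ since $I$ is an ideal and $x^k \in I$.

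Next I would iterate the hypothesis. Writing $x^{n^j} = (x^{n^{j-1}})^n \in I$, the hypothesis applied to the element $x^{n^{j-1}}$ yields $x^{n^{j-1}} \in I$. Repeating, $x^{n^{j-1}} = (x^{n^{j-2}})^n \in I$ gives $x^{n^{j-2}} \in I$, and after $j$ applications of the hypothesis we conclude $x = x^{n^0} \in I$. To make this formal one would set up a downward induction on $i$ with the statement ``$x^{n^i} \in I$'', base case $i = j$ just established, inductive step being a single application of the hypothesis.

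Since there are no subtle steps or clever algebraic tricks involved, there is no real obstacle; the only thing to watch is that the iteration terminates at $n^0 = 1$, which uses $n > 1$ crucially (otherwise $n^j$ would never exceed $k$ for $k > 1$, and the induction would be vacuous). Once $x \in I$ is deduced for an arbitrary $x$ with some power in $I$, this is exactly the statement $\sqrt{I} \subseteq I$, and the reverse inclusion is automatic, so $I = \sqrt{I}$ is radical.
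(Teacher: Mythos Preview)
Your reading of the typo is correct, and your proof is complete and correct. Your approach differs from the paper's: you first inflate the exponent $k$ to a power $n^j$ of $n$ (by multiplying by $x^{n^j-k}$) and then descend in $j$ steps, each step a single application of the hypothesis, to reach $x = x^{n^0} \in I$. The paper instead argues by contradiction via a minimal exponent: it takes the least $m$ with $x^m \in I$ and shows, by a case split according to whether $n \mid m$ or not, that one can produce a strictly smaller exponent with the same property. Your argument is cleaner in that it avoids the divisibility case analysis entirely; the paper's argument has the mild advantage of not needing to pass through very large exponents $n^j$, but this is cosmetic. Both rely on $n>1$ in the same essential way (for you, so that $n^j$ eventually dominates $k$; for the paper, so that $q+1 < m$ in the non-divisible case).
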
 

\begin{proof}
Assume $\sqrt{I} \neq I$. Let $x \in \sqrt{I} - I$. Then there exists a least $m \in \mathbb{N}$ such that $x^m \in I$. If $n|m$, then writing $m = nk$ for some $k \in \mathbb{N}$, we get by hypothesis that $x^k \in I$. But $k < m$, which contradicts our choice of $m$. Now suppose that $n$ does not divide $m$. Let $m = qn + r$ for $q, r \in \mathbb{N}$ such that $0 < r < n$. Then $x^mx^{n-r} = x^{(q+1)n} \in I$. By hypothesis we get $x^{q+1} \in I$. Since $n > 1$, we have $q+1 < m$, which again contradicts our choice of $m$. By contradiction it follows that $\sqrt{I} - I = \emptyset$, proving that $\sqrt{I} = I$.
\end{proof}

\begin{lemma} 
\label{arithmetic-fact}
Let $n \in \mathbb{N}$. Then there exists $m \in \mathbb{N}$ such $(n-1)^{m} < n^{m-1}$.
\end{lemma}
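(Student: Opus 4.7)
The plan is to reduce the stated inequality to a standard fact about exponential growth. Rewriting, $(n-1)^m < n^{m-1}$ is equivalent to $n(n-1)^m < n^m$, and for $n \geq 2$, dividing both sides by $(n-1)^m$ yields the equivalent form
\[
n < \left(\frac{n}{n-1}\right)^m = \left(1 + \frac{1}{n-1}\right)^m.
\]
Since $1 + \tfrac{1}{n-1} > 1$, the right-hand side tends to $\infty$ as $m \to \infty$, so there exists $m \in \mathbb{N}$ making the inequality hold.

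First I would dispose of the degenerate case $n = 1$ separately: then $(n-1)^m = 0$ and $n^{m-1} = 1$, so any $m \geq 1$ works. For $n \geq 2$, I would perform the algebraic rearrangement above, and then invoke (or briefly justify) the fact that $(1 + 1/(n-1))^m \to \infty$. One clean way is to take logarithms and note that any $m$ with $m > \log n / \log\!\bigl(n/(n-1)\bigr)$ suffices; alternatively one can use the Bernoulli-type estimate $(1 + 1/(n-1))^m \geq 1 + m/(n-1)$, so that any $m > (n-1)(n-1) = (n-1)^2$ does the job, giving a clean explicit choice.

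There is essentially no obstacle here; the statement is a purely elementary inequality and the only thing to watch is the edge case $n = 1$ (to avoid dividing by $n-1 = 0$) and making sure $m$ is chosen at least $1$ so that $n^{m-1}$ makes sense as written. The lemma's role in the paper is presumably to arrange exponents in a later argument (likely tied to \ref{sufficient-condition-for-radical-ideal}), so as long as we produce \emph{some} such $m$, nothing more is needed.
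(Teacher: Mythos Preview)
Your proof is correct and follows essentially the same idea as the paper's: the paper rewrites the ratio as $(n-1)^m/n^{m-1} = (n-1)(1-1/n)^{m-1}$ and uses $(1-1/n)^{m-1} \to 0$, whereas you take the reciprocal viewpoint $(1+1/(n-1))^m \to \infty$ --- these are the same limit argument in dual form. Your explicit Bernoulli bound $m > (n-1)^2$ and the separate handling of $n=1$ are small bonuses not present in the paper's version.
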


\begin{proof}
We have $(n-1)^m/n^{m-1} = (n-1)(1 - 1/n)^{m-1}$. Since $1 - 1/n < 1$, it follows that lim$_{m \rightarrow \infty} (1 - 1/n)^{m-1} = 0$. Thus, for $\epsilon = 1/n$, there exists $M \in \mathbb{N}$ such that for all $m \geq M$, $(1 - 1/n)^{m-1} < 1/n$. Then, $(n-1)(1 - 1/n)^{m-1} < (n-1)/n < 1$. We have proved that there exists $m \in \mathbb{N}$ such that $(n-1)(1 -1/n)^{m-1} < 1$. This implies that $(n-1)^m < n^{m-1}$.
\end{proof}

\begin{lemma} 
\label{sum-of-prime-deals-is-radical}
Let $R$ be a $n$-adically closed ring, for $n \in \mathbb{N}$. If $p, q$ are prime ideals of $R$, then $p + q$ is a radical ideal.
\end{lemma}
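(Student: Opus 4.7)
My plan is to apply Lemma~\ref{sufficient-condition-for-radical-ideal}, so it suffices to exhibit a natural number $N > 1$ such that for every $x \in R$, the implication $x^N \in p + q \Longrightarrow x \in p + q$ holds. Invoking Lemma~\ref{arithmetic-fact}, I fix $m \in \mathbb{N}$ with $(n-1)^m < n^{m-1}$ and take $N = n^m$.

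Suppose $x^{n^m} \in p + q$, and write $x^{n^m} = a + b$ with $a \in p$ and $b \in q$. Since $R$ is $n$-adically closed, the monic polynomial $T^n - a \in R[T]$ has a root $u \in R$; because $u^n = a \in p$ and $p$ is prime, $u \in p$. Viewing $x^{n^m} = (x^{n^{m-1}})^n$, we get
\[
(x^{n^{m-1}})^n - u^n \;=\; b \;\in\; q,
\]
and the standard factorisation $y^n - z^n = (y - z)(y^{n-1} + y^{n-2}z + \cdots + z^{n-1})$ combined with the primality of $q$ yields a dichotomy. Either $x^{n^{m-1}} - u \in q$, in which case $x^{n^{m-1}} \in p + q$ since $u \in p$; or the second factor lies in $q$, and reducing it modulo $p$ (using $u \in p$) collapses it to $(x^{n^{m-1}})^{n-1}$, so that $x^{(n-1)n^{m-1}} \in p + q$. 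Either way, raising to the $(n-1)$-th power in the first case, we obtain $x^{(n-1)n^{m-1}} \in p + q$.

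I would iterate this construction $m$ times. At step $j$ the working exponent is $n^{m-j}(n-1)^j$, so after $m$ steps $x^{(n-1)^m} \in p + q$. Now the arithmetic inequality $(n-1)^m < n^{m-1}$ enters: because $x^{(n-1)^m} \in p + q$ and $p + q$ is an ideal, every higher power of $x$ lies in $p + q$ as well, in particular $x^{n^{m-1}} \in p + q$. The same inequality $(n-1)^{m'} < n^{m'-1}$ continues to hold for every $m' \geq m$ (one checks this by multiplying through), so the outer reduction can be repeated: $x^{n^m} \in p + q \Rightarrow x^{n^{m-1}} \in p + q \Rightarrow x^{n^{m-2}} \in p + q \Rightarrow \cdots$, and eventually $x \in p + q$.

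The main obstacle is justifying that this iterative descent actually terminates at $x$ rather than stalling at some nontrivial power $x^{n^k}$; this requires careful bookkeeping of which exponents arise from the factoring and which inequalities apply at each step, and it is the only part of the argument that uses the arithmetic fact in a genuinely quantitative way rather than just as a single reduction of exponent.
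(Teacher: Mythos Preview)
Your argument correctly establishes the key implication ``$y^n \in p+q \Rightarrow y^{n-1} \in p+q$'' and correctly deduces from it that $x^{n^m} \in p+q \Rightarrow x^{(n-1)^m} \in p+q \Rightarrow x^{n^{m-1}} \in p+q$. The gap is exactly where you suspect it: the outer descent $x^{n^k}\in p+q \Rightarrow x^{n^{k-1}}\in p+q$ needs the inequality $(n-1)^k < n^{k-1}$ at the \emph{current} value $k$, and $k$ is decreasing, not increasing. Your observation that the inequality persists for $m'\ge m$ is true but points the wrong way. Concretely, take $n=3$ (so the minimal $m$ is $3$): from $x^{9}\in p+q$ your inner iteration gives only $x^{4}\in p+q$, and $4>3=n^{m-2}$, so you cannot pad down to $x^{3}$. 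One checks that the reachable exponents from $4$ under ``pad up'' and ``$3k\mapsto 2k$'' are exactly $\{4,5,6,\dots\}$; you never reach $1$, $2$, or $3$. So the descent genuinely stalls.

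The fix --- and this is the paper's device --- is to use $n$-adic closedness a second time, now to extract iterated $n$-th roots of $x$ itself rather than of $a$. Choose $t_1,\dots,t_{m-1}$ with $t_1^n=x$ and $t_i^n=t_{i-1}$, so $x=t_{m-1}^{\,n^{m-1}}$. Starting from $x^n\in p+q$ (so you can take $N=n$, not $n^m$), one application of the key implication gives $x^{n-1}=t_1^{\,n(n-1)}\in p+q$; a second gives $t_1^{(n-1)^2}=t_2^{\,n(n-1)^2}\in p+q$; and after $m$ applications, $t_{m-1}^{(n-1)^m}\in p+q$. Now the single inequality $(n-1)^m<n^{m-1}$ finishes the job: $x=t_{m-1}^{\,n^{m-1}}=t_{m-1}^{(n-1)^m}\cdot t_{m-1}^{\,n^{m-1}-(n-1)^m}\in p+q$. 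In other words, instead of trying to push the exponent on $x$ down through a shrinking tower $n^m,n^{m-1},\dots$, you hold the target exponent $n^{m-1}$ fixed and push the \emph{base} down to a deep enough root; then one use of the arithmetic inequality suffices.
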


\begin{proof}
We will show that for all $x \in p + q$, if $x^{n} \in p + q$ then $x \in p + q$. We will then be done by \ref{sufficient-condition-for-radical-ideal}. Hence let $x^{n} \in p + q$. Then there exists $a \in p, b \in q$ such that $x^{n} = a + b$. Since $R$ is $n$-adically closed, there exists $t_1 \in R$ such that $t^{n}_1 = a$. Since $a \in p$ and $p$ is prime, this means that $t_1 \in p$, and in particular, $t^n_1 \in p$. Now, $b = x^{n} - a = x^{n} - t^{n}_1 = (x - t_1)(x^{n-1} + x^{n-1}t_1 + \cdots + t^{n-1}_1) \in q$. Thus, $x - t_1 \in q$ or $x^{n-1} + x^{n-1}t_1 + \cdots + t^{n-1}_1 \in q$. Since $t_1 \in p$, we get  $x \in p + q$ or $x^{n-1} \in p+q \Rightarrow x^{n-1} \in p+q$. So we have proved the following: if $x^{n} \in p + q$, then $x^{n-1} \in p+q$. By \ref{arithmetic-fact}, there exists $m \in \mathbb{N}$ such that $(n-1)^m < n^{m-1}$. For $i = 2, \cdots, m-1$, there exists $t_i$ such that $t_{i-1} = t^n_i$.\\

\noindent
Then we have the following:

\medskip \noindent
$x^{n-1} = t^{n(n-1)}_1 \in p+q$, and so, $t^{(n-1)^2}_1 \in p+q$.

\medskip \noindent
$t^{(n-1)^2}_1 = t^{n(n-1)^2}_2 \in p+q$, and so, $t^{(n-1)^3}_2 \in p+q$.

\medskip\noindent
$t^{(n-1)^3}_2 = t^{n(n-1)^3}_3 \in p+q$, and so, $t^{(n-1)^4}_3 \in p+q$.

\medskip\noindent
$\cdots$\

\medskip\noindent
$t^{(n-1)^{m-1}}_{m-2} = t^{n(n-1)^{m-1}}_{m-1} \in p+q$, and so, $t^{(n-1)^m}_{m-1} \in p+q$.

\medskip\noindent
Let $k = n^{m-1} - (n-1)^m$. Then $x = t^n_1 = t^{n^2}_2 = t^{n^3}_3 = \cdots = t^{n^{m-1}}_{m-1} = (t^{(n-1)^m}_{m-1})t^{k}_{m-1} \in p+q$.
\end{proof}

\noindent
We are now in a position to generalize the result of \ref{irreducible-intersection-property-quadratically-closed-rings}.

\begin{lemma} 
\label{irreducible-intersection-property-of-2n-adically-closed-rings}
Let $R$ be a $2n$-adically closed ring, for $n \in \mathbb{N}$. If $p, q$ are prime ideals of $R$, then either $p + q = R$, or $p+q$ is a prime ideal.
\end{lemma}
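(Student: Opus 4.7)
The plan is to adapt the quadratically-closed argument from \ref{irreducible-intersection-property-quadratically-closed-rings} by passing to $n$-th powers so that the degree-$2n$ hypothesis plays the role that the degree-$2$ hypothesis played there, and then to finish by invoking \ref{sum-of-prime-deals-is-radical}, which tells us that $p+q$ is already a radical ideal.

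First, assume $p+q \ne R$. To show $p+q$ is prime, suppose $xy \in p+q$. Raising to the $n$-th power gives $x^n y^n \in p+q$. Set $X = x^n$, $Y = y^n$, and $Z = Y - X$, and write $XY = a + b$ with $a \in p$ and $b \in q$. The key move is to consider the monic polynomial of degree exactly $2n$
\begin{equation*}
f(T) = T^{2n} + Z T^n - a \in R[T].
\end{equation*}
By hypothesis, $f$ has a root $u \in R$; setting $v = u^n$ we obtain $v^2 + Zv = a$, i.e.\ $v(v+Z) = a \in p$, so either $v \in p$ or $v + Z \in p$. On the other side, using $X^2 + XZ = XY$, we compute
\begin{equation*}
(X - v)(X + v + Z) = X^2 + XZ - v^2 - Zv = XY - a = b \in q,
\end{equation*}
so either $X - v \in q$ or $X + v + Z \in q$.

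Now do the $2\times 2$ case analysis, using the two identities
\begin{equation*}
X = (X - v) + v = (X + v + Z) - (v + Z), \qquad Y = X + Z = (X + v + Z) - v = (X - v) + (v + Z),
\end{equation*}
exactly as in the quadratic proof. In every case one concludes that either $X \in p+q$ or $Y \in p+q$, i.e.\ $x^n \in p+q$ or $y^n \in p+q$. Since a $2n$-adically closed ring is in particular $2n$-adically closed in the sense needed for \ref{sum-of-prime-deals-is-radical}, the ideal $p+q$ is radical, so $x^n \in p+q$ forces $x \in p+q$, and similarly for $y$. This shows $p+q$ is prime.

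The only nontrivial step is guessing the right auxiliary polynomial: the substitution $T \mapsto T^n$ in the quadratic polynomial $S^2 + ZS - a$ produces a monic polynomial of degree precisely $2n$, which is the one place where the $2n$-adic hypothesis (and not, say, $n$-adic) is essential. Once $u$ is produced and $v := u^n$ is formed, everything else is the same bookkeeping as in \ref{irreducible-intersection-property-quadratically-closed-rings}; the main conceptual obstacle, namely the failure of $x^n \in p+q$ to directly give $x \in p+q$, is already neutralized by the previously established radicality of $p+q$.
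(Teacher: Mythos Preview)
Your proof is correct and follows essentially the same route as the paper's: both use the auxiliary polynomial $T^{2n}+zT^n-a$ to reduce to the quadratic-style case analysis and then invoke \ref{sum-of-prime-deals-is-radical} to pass from $n$-th powers back to $x$ or $y$. The only cosmetic difference is that the paper sets $z=y-x^n$ and works with $x^n y$, obtaining $x^n\in p+q$ or $y\in p+q$ (so radicality is needed only on one side), whereas you first pass to $x^n y^n$ and end up needing radicality in both cases; this is harmless.
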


\begin{proof}
Suppose $p+q \neq R$. It suffices to show that $p+q$ is prime. Let $x, y \in R$ such that $xy \in p+q$. Let $z = y - x^n$. Then $x^{2n} + zx^n = x^ny \in p+q$. Thus, there exist $a \in p, b \in q$ such that $x^{2n} + zx^n = a + b$. Since $R$ is $2n$-adically closed, there exists $u \in R$ such that $u^{2n} + zu^n - a = 0$. This means that $u^{2n} + zu^n = a \in p$, and since $p$ is prime, we get $u^n \in p$ or $u^n + z \in p$. We also have $b = x^{2n} + zx^n - a = x^{2n} + zx^n - u^{2n} - zu^n = (x^n - u^n)(x^n + u^n + z)$. Since $b \in q$, we get $x^n - u^n \in q$ or $x^n + u^n + z \in q$. We then have
\begin{equation}
x^n = (x^n - u^n) + u^n = (x^n + u^n + z) - (u^n + z)
\end{equation}
\begin{equation}
y = x^n + z = (x^n + u^n + z) - u^n = (x^n - u^n) + (u^n + z)
\end{equation}
As a result, it follows that $x^n \in p+q$ or $y \in p+q$. Since $p+q$ is radical by \ref{sum-of-prime-deals-is-radical}, we get $x \in p+q$ or $y \in p+q$.
\end{proof}

\medskip\noindent
Generalizing \ref{irreducible-intersection-property-of-2n-adically-closed-rings} to $n$-adically closed rings by similar methods appears to be difficult. However, with an additional hypothesis, we get the following result:

\begin{lemma} 
\label{irreducible-intersection-property-of-n-adically-closed-rings-with-additional-property}
Let $R$ be an $n$-adically closed ring, for $n > 1$. Assume that for any prime ideal $p \subset R$, and for all $a, b \in R$ such that $a \notin p$ and $b \in p$, the polynomial $T^n +aT^{n-1} + b \in R[T]$ has a root $\alpha \in R - p$. Then the sum of two prime ideals is either prime or the whole ring.
\end{lemma}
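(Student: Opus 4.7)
The plan is to adapt the proof of Lemma \ref{irreducible-intersection-property-of-2n-adically-closed-rings}, using the additional polynomial hypothesis in place of $2n$-adic closure. First I would invoke Lemma \ref{sum-of-prime-deals-is-radical} (which requires only $n$-adic closure) to note that $p + q$ is a radical ideal, so that any conclusion of the form $x^k \in p+q$ upgrades to $x \in p+q$. Assuming $p + q \neq R$, the task will be: given $x, y \in R$ with $xy \in p+q$, show $x \in p+q$ or $y \in p+q$.

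I would write $xy = c + d$ with $c \in p$, $d \in q$, set $z = y - x$, and observe that $x^{n-1}y = x^n + z x^{n-1} = c' + d'$ where $c' := x^{n-2} c \in p$ and $d' := x^{n-2} d \in q$ (using $n \geq 2$). The key polynomial is $P(T) = T^n + z T^{n-1} - c'$, which has the form $T^n + a T^{n-1} + b$ with $a = z$ and $b = -c' \in p$, so the hypothesis applies with the prime $p$ exactly when $z \notin p$, producing a root $u \in R - p$ with $u^{n-1}(u + z) = c'$; since $u \notin p$, this forces $u + z \in p$. Next, using $y = x + z$ together with the geometric-series factorizations of $x^n - u^n$ and $x^{n-1} - u^{n-1}$, I would verify that the residual satisfies $d' = (x - u)(y S + u^{n-1})$, where $S = \sum_{i=0}^{n-2} x^{n-2-i} u^i$. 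Since $d' \in q$ and $q$ is prime, either $x - u \in q$ or $y S + u^{n-1} \in q$.

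The remainder is a case analysis on whether $z \in p$ and whether $z \in q$. Case I ($z \notin p$): the first sub-case $x - u \in q$ gives $y = (x - u) + (u + z) \in p+q$ directly, while in the second sub-case, reducing mod $p$ using $u \equiv -z = x - y \pmod p$ and the identity $y \sum_{i=0}^{n-2} x^{n-2-i}(x-y)^i = x^{n-1} - (x-y)^{n-1}$ collapses $y S + u^{n-1}$ to $x^{n-1}$ modulo $p$, so $x^{n-1} \in p+q$ and hence $x \in p+q$ by radicalness. Case II ($z \in p$ but $z \notin q$) is the symmetric argument applied to $T^n + z T^{n-1} - d'$ with the prime $q$. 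Case III ($z \in p \cap q$): here $y - x \in p+q$ directly, so $x^2 = xy - x(y - x) \in p+q$, forcing $x \in p+q$ and then $y = x + (y - x) \in p+q$. The main algebraic obstacle will be verifying the factorization $d' = (x-u)(y S + u^{n-1})$ and the mod-$p$ collapse of $y S + u^{n-1}$ to $x^{n-1}$; both rest on the identity $A^k - B^k = (A - B)\sum_{i=0}^{k-1} A^{k-1-i} B^i$, so the book-keeping is routine but requires some care.
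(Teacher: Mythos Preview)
Your proposal is correct and follows essentially the same strategy as the paper's proof: set $z=y-x$, work with $x^{n-1}y = x^n + zx^{n-1}$, apply the extra hypothesis to the polynomial $T^n+zT^{n-1}-c'$ when $z\notin p$ to obtain a root $u\notin p$ with $u+z\in p$, factor the $q$-part as $(x-u)(\cdots)$, and in the second sub-case collapse the cofactor to $x^{n-1}$ modulo $p+q$. Your factorization $d'=(x-u)(yS+u^{n-1})$ is exactly the paper's factor $A+zB$ rewritten via $y=x+z$, and your mod-$p$ reduction is equivalent to the paper's step of subtracting $(u+z)(x^{n-2}+\cdots+u^{n-2})$.

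The only real difference is your case split: the paper uses just two cases, $z\in p$ versus $z\notin p$. When $z\in p$ one has $xz\in p$ and $xy\in p+q$, so $x^2 = xy - xz \in p+q$, and radicalness finishes it---no need to distinguish whether $z\in q$. Thus your Case~II (the symmetric argument with the prime $q$) is unnecessary, and your Case~III argument already covers all of $z\in p$. This is harmless redundancy, not an error. Also, your choice to set $c'=x^{n-2}c$, $d'=x^{n-2}d$ from a fixed decomposition $xy=c+d$ is fine but not needed; any decomposition $x^{n-1}y=c'+d'$ with $c'\in p$, $d'\in q$ would do.
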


\begin{proof}
Let $P, Q$ be two prime ideals of $R$ such that $P + Q \neq R$.  Let $x, y \in R$ such that $xy \in P+Q$. Let $z = y - x$.  Then either $z$ is an element of  $P$, or $z \notin P$.
If $z \in P$, from the equation $xz = xy - x^2$, it follows that $x^2 \in P+Q$. Applying \ref{sum-of-prime-deals-is-radical} we get $x \in P+Q$. Hence, assume that $z \notin P$. Then, $x^n + zx^{n-1} = x^{n-1}y \in P + Q$. In particular, there exists $c \in P, d \in Q$ such that $x^{n-1}y = c + d$. By hypothesis, the polynomial $T^n + zT^{n-1} - c$ has a root $u \notin P$. Since $u^n + zu^{n-1} = c \in P$, it follows that $u + z \in P$. On the other hand, $d = x^n + zx^{n-1} - u^n - zu^{n-1}  = \big{(}x - u\big{)}\big{(}x^{n-1} + x^{n-2}u + \cdots + u^{n-1} + z(x^{n-2} + x^{n-3}u + \cdots + u^{n-2})\big{)} \in Q$. Thus, $x - u \in Q$ or $x^{n-1} + x^{n-2}u + \cdots + u^{n-1} + z(x^{n-2} + x^{n-3}u + \cdots + u^{n-2}) \in Q$. If $x-u \in Q$, then $y = x + z = (x-u) + (u+z) \in P + Q$. If $x^{n-1} + x^{n-2}u + \cdots + u^{n-1} + z(x^{n-2} + x^{n-3}u + \cdots + u^{n-2}) \in Q$, then $\big{(}x^{n-1} + x^{n-2}u + \cdots + u^{n-1} + z(x^{n-2} + x^{n-3}u + \cdots + u^{n-2})\big{)} - \big{(}u+z\big{)}\big{(}x^{n-2} + x^{n-3}u + \cdots + u^{n-2}\big{)} = x^{n-1} \in P+Q$. Again by \ref{sum-of-prime-deals-is-radical}, we win!
\end{proof}

\begin{remark}
\label{conditionally-n-adically-closed}
Let $R$ be a ring. If there exists $n > 1$ such that for any prime ideal $p \subset R$, and for all $a, b \in R$ such that $a \notin p$ and $b \in p$, the polynomial $T^n +aT^{n-1} + b \in R[T]$ has a root $\alpha \in R - p$, then the proof of \ref{irreducible-intersection-property-of-n-adically-closed-rings-with-additional-property} shows that in this ring, the sum of two prime ideals will always be a primary ideal.
\end{remark}

\begin{lemma} 
\label{decreasing-degree}
Let $R$ be an $n$-adically closed reduced ring. Then for all $m \in \mathbb{N}$ such that $m|n$, $R$ is $m$-adically closed.
\end{lemma}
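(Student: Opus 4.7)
The plan is to exploit the fact that taking a power of a monic polynomial of degree $m$ produces a monic polynomial whose degree is a multiple of $m$, and then use the reduced hypothesis to pass from a root of the power to a root of the original polynomial.

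More precisely, write $n = mk$ for some $k \in \mathbb{N}$, which exists because $m \mid n$. Given a monic polynomial $f(x) \in R[x]$ of degree $m$, I would form the polynomial $f(x)^k \in R[x]$, which is monic of degree $mk = n$. Since $R$ is $n$-adically closed by hypothesis, this polynomial admits a root $r \in R$, so that $f(r)^k = 0$ in $R$.

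The final step is to conclude that $f(r) = 0$. Here we use that $R$ is reduced: the element $f(r) \in R$ satisfies $f(r)^k = 0$, hence is nilpotent, hence must be zero. Thus $r$ is a root of $f(x)$, and $R$ is $m$-adically closed.

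There is no real obstacle in this argument; both the formation of $f^k$ and the passage from $f(r)^k = 0$ to $f(r) = 0$ via reducedness are immediate, so the proof is essentially a one-line observation. I would just note in passing the edge case $m = 1$, which is trivially handled since linear monic polynomials always have roots in any ring regardless of any hypothesis, and the case $m = n$, which is the hypothesis itself.
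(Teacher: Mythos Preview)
Your proof is correct and follows exactly the same approach as the paper: raise a degree-$m$ monic polynomial to the $(n/m)$-th power to obtain a degree-$n$ monic, use $n$-adic closedness to find a root, then use reducedness to conclude it is already a root of the original polynomial.
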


\begin{proof} 
Let $f(T) \in R[T]$ be a monic polynomial of degree $m$. Then $f(T)^d$ is a monic polynomial of degree $n$, where $d = n/m$. By hypothesis, $f(T)^d$ has a root $\alpha \in R$. Thus, $f(\alpha)^d = 0$, and since $R$ is reduced, we get $f(\alpha) = 0$.
\end{proof}

\begin{lemma} 
\label{intersections-of-primes}
Let $R$ be a $2n$-adically closed ring such that $Spec(R)$ is irreducible. Let $p_1, \cdots, p_n$ be prime ideals of $R$. Then there is a unique prime ideal $P$ which is maximal among those prime ideals contained in $p_1 \cap \cdots \cap p_n$.
\end{lemma}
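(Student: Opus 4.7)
\medskip\noindent
The plan is to combine Zorn's lemma with the sum-of-two-primes result \ref{irreducible-intersection-property-of-2n-adically-closed-rings}. Let $\mathcal{S}$ denote the poset (under inclusion) of all prime ideals of $R$ contained in $p_1 \cap \cdots \cap p_n$. The whole argument reduces to verifying that $\mathcal{S}$ is nonempty, that it satisfies the Zorn hypothesis, and that maximal elements are unique.

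\medskip\noindent
First, to see $\mathcal{S}$ is nonempty, I would use irreducibility of $Spec(R)$: this forces $Nil(R)$ to be a prime ideal, and $Nil(R)$ is contained in every prime ideal of $R$. In particular $Nil(R) \subseteq p_1 \cap \cdots \cap p_n$, so $Nil(R) \in \mathcal{S}$. Next, for any chain $\{Q_\alpha\}$ in $\mathcal{S}$, the union $\bigcup_\alpha Q_\alpha$ is again a prime ideal by the usual check (if $ab$ lies in the union, it lies in some $Q_\alpha$, whose primality puts $a$ or $b$ in $Q_\alpha$ and hence in the union), and it remains contained in $p_1 \cap \cdots \cap p_n$. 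So $\mathcal{S}$ admits maximal elements by Zorn's lemma.

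\medskip\noindent
For uniqueness, suppose $P_1, P_2$ are both maximal in $\mathcal{S}$. Then $P_1 + P_2 \subseteq p_1 \cap \cdots \cap p_n$, and since each $p_i$ is proper, this forces $P_1 + P_2 \neq R$. By \ref{irreducible-intersection-property-of-2n-adically-closed-rings}, $P_1 + P_2$ is therefore prime, so $P_1 + P_2 \in \mathcal{S}$. But $P_1 + P_2$ contains both $P_1$ and $P_2$, so maximality forces $P_1 = P_1 + P_2 = P_2$.

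\medskip\noindent
There is no real obstacle here: the only substantive input is \ref{irreducible-intersection-property-of-2n-adically-closed-rings}, which supplies the closure-under-sums needed for uniqueness. Irreducibility of $Spec(R)$ enters in exactly one place, namely to guarantee that some prime actually sits inside $p_1 \cap \cdots \cap p_n$; without it, the intersection could fail to contain any common minimal prime and $\mathcal{S}$ might be empty.
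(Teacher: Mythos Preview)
Your proof is correct and follows essentially the same route as the paper: nonemptiness via $Nil(R)$ being prime (from irreducibility of $Spec(R)$), existence of maximal elements via Zorn, and uniqueness via \ref{irreducible-intersection-property-of-2n-adically-closed-rings}. The only difference is that you spell out the chain condition for Zorn explicitly, which the paper leaves implicit.
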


\begin{proof}
Let $\Sigma$ be the set of prime ideals of $R$ which are contained in $p_1\cap \cdots \cap p_n$. Since $Spec(R)$ is irreducible, it follows that $Nil(R) \in \Sigma$, where $Nil(R)$ is the nilradical of $R$. Thus $\Sigma$ is non-empty. Then by Zorn's lemma, $\Sigma$ has maximal elements. Suppose that $\Sigma$ does not have a unique maximal element, and let $P_1, P_2$ be distinct maximal elements of $\Sigma$. Then $P_1, P_2 \subsetneq P_1 + P_2$, and $P_1 + P_2 \subset p_1 \cap \cdots \cap p_n \neq R$. Thus by \ref{irreducible-intersection-property-of-2n-adically-closed-rings}, $P_1 + P_2$ is a prime ideal. But then $P_1 + P_2 \in \Sigma$, contradicting the maximality of $P_1$ and $P_2$.
\end{proof}

\begin{comment} 
\label{comment-on-Artin-1.5(i)}
Note that one can follow the pattern of argument in \ref{intersections-of-primes} to prove [\cite{Art}, 1.5(i)], without the assumption that $R$ admits no finite, \'etale algebra $R'$ of rank $2$, which splits at $p$ and $p'$ as in [\cite{Art}, 1.5(i)], keeping the other hypotheses of [\cite{Art}, 1.5(i)].
\end{comment}

\begin{definition} 
\label{def-joins-of-rings}
Let $K$ be a ring, and $A, B$ be subrings of $K$. We define the \textbf{join} of $A$ and $B$, denoted $[A,B]$ to be the smallest subring of $K$ (smallest with respect to inclusion) that contains $A$ and $B$.
\end{definition}

\begin{lemma} 
\label{explicit-computation-of-joins-of-rings}
Let $A, B$ be subrings of a ring $K$. Consider the set $\Gamma = \{ a_1b_1 + \cdots + a_nb_n : a_i \in A, b_j \in B\}$. This is a subring of $K$, and $\Gamma = [A,B]$.
\end{lemma}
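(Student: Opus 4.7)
The plan is to verify that $\Gamma$ satisfies the subring axioms directly from its description, and then use a double inclusion argument to identify $\Gamma$ with $[A,B]$.

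First I would check that $\Gamma$ is a subring of $K$. Closure under addition is immediate: the sum of two expressions of the form $\sum a_i b_i$ and $\sum a'_j b'_j$ is again a finite sum $\sum a_i b_i + \sum a'_j b'_j$ of products of elements of $A$ with elements of $B$, hence in $\Gamma$. Closure under multiplication follows by expanding $\bigl(\sum_{i} a_i b_i\bigr)\bigl(\sum_{j} a'_j b'_j\bigr) = \sum_{i,j}(a_i a'_j)(b_i b'_j)$, where each $a_i a'_j \in A$ and $b_i b'_j \in B$ because $A$ and $B$ are subrings. Additive inverses lie in $\Gamma$ since $-\sum a_i b_i = \sum (-a_i) b_i$, and the multiplicative identity $1 \in K$ can be written as $1_A \cdot 1_B$, noting that $A$ and $B$ being subrings of $K$ share the identity of $K$ by our conventions in section \ref{conventions}.

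Next I would verify that $A \subseteq \Gamma$ and $B \subseteq \Gamma$. Any $a \in A$ can be written as $a \cdot 1_B \in \Gamma$, and similarly $b = 1_A \cdot b \in \Gamma$ for $b \in B$. Thus $\Gamma$ is a subring of $K$ containing both $A$ and $B$. By the defining property of $[A,B]$ as the smallest such subring (see \ref{def-joins-of-rings}), we conclude $[A,B] \subseteq \Gamma$.

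For the reverse inclusion, I would observe that any subring $C$ of $K$ containing $A$ and $B$ must contain every product $a_i b_i$ (since $C$ is closed under multiplication) and hence every finite sum $\sum a_i b_i$ (since $C$ is closed under addition). Applying this to $C = [A,B]$ gives $\Gamma \subseteq [A,B]$, completing the proof. No serious obstacle arises here; the only mild subtlety is keeping track of the fact that subrings in this paper share the identity of the ambient ring, which ensures that $A$ and $B$ genuinely sit inside $\Gamma$ via the trivial products $a \cdot 1$ and $1 \cdot b$.
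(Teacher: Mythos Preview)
Your proof is correct and follows essentially the same double-inclusion argument as the paper: show $\Gamma$ is a subring containing $A$ and $B$ (so $[A,B]\subseteq\Gamma$), and show any subring containing $A$ and $B$ contains $\Gamma$ (so $\Gamma\subseteq[A,B]$). You have simply spelled out the subring verifications that the paper leaves as ``clear.''
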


\begin{proof}
It is clear that $\Gamma$ is a subring of $K$. It is also clear that any subring of $K$ that contains $A, B$ also contains elements of the form $a_1b_1 + \cdots + a_nb_n$, where $a_i \in A, b_j \in B$. Thus, any subring of $K$ that contains $A, B$ contains $\Gamma$. In particular, this means that $\Gamma$ is a subring of $[A, B]$. On the other hand, $\Gamma$ clearly contains $A, B$. By the definition of the join of $A$ and $B$, it follows that $\Gamma = [A, B]$.
\end{proof}

\begin{lemma} 
\label{localization-and-joins}
Let $A$ be a domain, and $S, T$ be multiplicative subsets of $A$. If we consider $S^{-1}A, T^{-1}A$ to be subrings of $Frac(A)$, then $[S^{-1}A, T^{-1}A] = (ST)^{-1}A$.
\end{lemma}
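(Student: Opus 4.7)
The plan is to prove both inclusions using the explicit description of the join provided by Lemma \ref{explicit-computation-of-joins-of-rings}. First, note that $ST = \{st : s \in S, t \in T\}$ is itself a multiplicative subset of $A$: it contains $1 = 1 \cdot 1$, and is closed under multiplication since $(s_1 t_1)(s_2 t_2) = (s_1 s_2)(t_1 t_2) \in ST$. Moreover, since $A$ is a domain, $S^{-1}A$, $T^{-1}A$, and $(ST)^{-1}A$ can all be identified with subrings of $\mathrm{Frac}(A)$ via Lemma \ref{localization-integral-domains} (with the convention that if $0$ lies in one of the multiplicative sets, the corresponding localization is the zero ring, in which case the identification still degenerates sensibly and the equality to be proved reduces to a trivial statement; assume from now on $0 \notin S, 0 \notin T$).

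For the inclusion $[S^{-1}A, T^{-1}A] \subseteq (ST)^{-1}A$, I would invoke Lemma \ref{explicit-computation-of-joins-of-rings} to write a typical element of the join as $\sum_{i=1}^n (a_i/s_i)(b_i/t_i) = \sum_{i=1}^n (a_i b_i)/(s_i t_i)$ with $a_i, b_i \in A$, $s_i \in S$, $t_i \in T$. Each summand has denominator $s_i t_i \in ST$, so lies in $(ST)^{-1}A$. Since $(ST)^{-1}A$ is closed under addition, the entire sum is in $(ST)^{-1}A$. Conversely, for $(ST)^{-1}A \subseteq [S^{-1}A, T^{-1}A]$, I would take a generic element $a/(st) \in (ST)^{-1}A$ and write it as $(a/s) \cdot (1/t)$, where $a/s \in S^{-1}A$ and $1/t \in T^{-1}A$; this is a single term of the form described in Lemma \ref{explicit-computation-of-joins-of-rings}, hence lies in $[S^{-1}A, T^{-1}A]$.

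There is no real obstacle here; the argument is essentially bookkeeping once one has the explicit description of the join from the preceding lemma. The only point requiring any care is the degenerate case where $0 \in S$ or $0 \in T$, but as noted, this collapses to a triviality under the convention already established for the preceding lemmas.
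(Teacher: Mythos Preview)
Your proof is correct and follows exactly the approach the paper intends: the paper's own proof is simply ``This is clear from \ref{explicit-computation-of-joins-of-rings},'' and you have spelled out the two inclusions that make this clear. Your handling of the degenerate case $0 \in S$ or $0 \in T$ is more careful than the paper bothers to be, but otherwise there is no difference.
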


\begin{proof}
This is clear from \ref{explicit-computation-of-joins-of-rings}.
\end{proof}

\begin{lemma} 
\label{joins-of-2n-adically-closed-rings}
Let $A$ be a $2n$-adically closed domain, and $p, q$ be prime ideals of $A$. If $A_p, A_q$ are considered to be subrings of $Frac(A)$, then the join $[A_p, A_q]$ is a local ring.
\end{lemma}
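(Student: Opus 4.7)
The plan is to identify the join $[A_p, A_q]$ explicitly as a localization of $A$, then describe its prime ideals via the usual localization-contraction correspondence, and finally apply \ref{intersections-of-primes} to see that there is exactly one maximal ideal.

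Setting $S = A \setminus p$ and $T = A \setminus q$, the set $ST = \{st : s \in S,\ t \in T\}$ is a multiplicative subset of $A$ containing $S$ and $T$ (take $t = 1$ or $s = 1$). By \ref{localization-and-joins} we have $[A_p, A_q] = (ST)^{-1}A$ inside $Frac(A)$. (If one prefers a hands-on check: by \ref{explicit-computation-of-joins-of-rings} a typical element of the join is $\sum (a_i/s_i)(b_i/t_i)$, which after clearing denominators lies in $(ST)^{-1}A$, and conversely $a/(st) = (a/s)(1/t) \in [A_p, A_q]$.)

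Next, I would describe the prime ideals of $(ST)^{-1}A$ through the standard inclusion-preserving bijection with those primes $\mathfrak{r}$ of $A$ satisfying $\mathfrak{r} \cap ST = \emptyset$. I claim this last condition is equivalent to $\mathfrak{r} \subseteq p \cap q$. Indeed, if $\mathfrak{r} \subseteq p \cap q$, then for $s \in S$ and $t \in T$ neither $s$ nor $t$ lies in $\mathfrak{r}$, and since $\mathfrak{r}$ is prime neither does $st$. Conversely, if $\mathfrak{r} \not\subseteq p$, pick $s \in \mathfrak{r} \setminus p$; then $s = s \cdot 1 \in \mathfrak{r} \cap ST$, and analogously if $\mathfrak{r} \not\subseteq q$.

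Finally, since $A$ is a domain, $Spec(A)$ is irreducible, so \ref{intersections-of-primes} applied to the family $\{p, q\}$ produces a unique prime $P \subseteq p \cap q$ that is maximal among primes of $A$ with this property. Under the inclusion-preserving bijection of the previous paragraph, $P(ST)^{-1}A$ is then the unique maximal ideal of $(ST)^{-1}A = [A_p, A_q]$, and the ring is local. There is no serious obstacle here: the substantive work has already been done in \ref{irreducible-intersection-property-of-2n-adically-closed-rings} and \ref{intersections-of-primes}; the content of this lemma is essentially to repackage the ideal-theoretic statement about the partially ordered set of primes below $p \cap q$ as the ring-theoretic statement that the join has a unique maximal ideal.
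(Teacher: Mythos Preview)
Your proof is correct and follows essentially the same route as the paper: identify $[A_p,A_q]$ with $(ST)^{-1}A$ via \ref{localization-and-joins}, characterize its primes as those of $A$ contained in $p\cap q$, and invoke \ref{intersections-of-primes} (noting $Spec(A)$ is irreducible since $A$ is a domain) to obtain the unique maximal ideal. The only cosmetic difference is that you phrase the prime correspondence as an explicit equivalence $\mathfrak r\cap ST=\emptyset\Leftrightarrow\mathfrak r\subseteq p\cap q$, whereas the paper checks the two needed inclusions directly.
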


\begin{proof}
Let $S = A - p$, and $T = A - q$. Then by \ref{localization-and-joins} it follows that $[A_p, A_q] = (ST)^{-1}A$. By \ref{intersections-of-primes} let $P$ be the unique prime ideal, maximal among those contained in $p \cap q$. Note that $ST \cap P = \emptyset$, for if not, then there exists $st \in ST$, where $s \in S, t \in T$ such that $st \in P$. Then $s \in P$ or $t \in P$, which implies that $s \in p$ or $t \in q$, a contradiction. Thus, $P((ST)^{-1}A)$ is a prime ideal of $(ST)^{-1}A$. If $Q$ is a prime ideal of $(ST)^{-1}A$, then $Q = \overline{q}((ST)^{-1}A)$, where $\overline{q}$ is a prime ideal of $A$ not intersecting $ST$. Since $S, T \subset ST$, it follows that $\overline{q} \cap S = \emptyset = \overline{q} \cap T$. Thus, $\overline{q} \subset p, q$. By the maximality of $P$, it follows that $\overline{q} \subset P$. Hence, $Q \subset P((ST)^{-1}A)$. This shows that $P((ST)^{-1}A)$ is the unique maximal ideal of $(ST)^{-1}A$, proving that $[A_p,A_q] = (ST)^{-1}A$ is a local ring.
\end{proof}

\begin{remark} 
\label{rem-relation-Artin}
Since absolutely integrally closed rings are also $2n$-adically closed, it follows that \ref{joins-of-2n-adically-closed-rings} is true for absolutely integrally closed domains. This is observed in [\cite{Art}, 1.8]. But the proof given above is much more elementary, and avoids the usage of semi-local rings and \'etale algebras.
\end{remark}

\begin{lemma} 
\label{colimits-directed-n-adically-closed-rings}
Let $(I, \leq)$ be a directed partially ordered set, that is, $I$ is a partially ordered set such that for all $i, j \in I$, there exists $k \in I$ with $i, j \leq k$. Let $(R_i, \varphi_{ij})_{i,j \in I}$ be a directed system of rings and ring homomorphisms indexed by $I$, that is, for all $i \leq j$ we have a unique ring map $\varphi_{ij}: R_i \rightarrow R_j$ satisfying:
\begin{enumerate}
\item[(i)] $\varphi_{ii} = id_{R_i}$.
\item[(ii)] $\varphi_{jk} \circ \varphi_{ij} = \varphi_{ik}$, for $i \leq j \leq k$.
\end{enumerate}
If each $R_i$ is an $n$-adically closed ring, then $colim_{i \in I} R_i$ is an $n$-adically closed ring.
\end{lemma}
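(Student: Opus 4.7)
The plan is to argue directly from the universal property of the directed colimit, using finiteness of the coefficients of any monic polynomial of degree $n$ together with directedness of $I$ to reduce to the hypothesis on a single $R_k$.

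First I would set $R = \mathrm{colim}_{i \in I} R_i$ and fix the canonical homomorphisms $\varphi_i : R_i \to R$, together with the transition maps $\varphi_{ij} : R_i \to R_j$ for $i \leq j$. Given a monic polynomial
\[
f(X) = X^n + r_{n-1} X^{n-1} + \cdots + r_1 X + r_0 \in R[X],
\]
each coefficient $r_j$ lies in the image of some $\varphi_{i_j}$, say $r_j = \varphi_{i_j}(\tilde{r}_j)$ with $\tilde{r}_j \in R_{i_j}$. Since $(I, \leq)$ is directed, by induction on the finite set $\{i_0, \ldots, i_{n-1}\}$ there exists $k \in I$ with $i_j \leq k$ for every $j$. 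Replacing each $\tilde{r}_j$ by $\varphi_{i_j k}(\tilde{r}_j) \in R_k$ and using $\varphi_k \circ \varphi_{i_j k} = \varphi_{i_j}$ (from the compatibility axiom (ii)), we obtain elements $s_j \in R_k$ with $\varphi_k(s_j) = r_j$.

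Next I would lift the polynomial: the monic polynomial
\[
g(X) = X^n + s_{n-1} X^{n-1} + \cdots + s_1 X + s_0 \in R_k[X]
\]
satisfies $(\varphi_k)_*(g) = f$, where $(\varphi_k)_*$ denotes the induced map on polynomial rings. Since $R_k$ is $n$-adically closed by hypothesis, $g$ has a root $\alpha \in R_k$. Applying $\varphi_k$ to the equation $g(\alpha) = 0$ and using that $\varphi_k$ is a ring homomorphism yields $f(\varphi_k(\alpha)) = 0$ in $R$. Thus $f$ has a root in $R$, and $R$ is $n$-adically closed.

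There is essentially no hard step here; the only thing to be careful about is the bookkeeping that lets one pull a finite tuple of elements of $R$ back to a single $R_k$, which is exactly the directedness of $I$ combined with compatibility (ii) ensuring that the lifts map consistently to the original coefficients in $R$.
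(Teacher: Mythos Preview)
Your argument is correct and is essentially the same as the paper's: both lift the finitely many coefficients of a degree-$n$ monic to a single $R_k$ using directedness, solve there, and push the root forward. The only difference is notational---the paper works with the explicit equivalence-class description $[r_i,i]$ of the colimit, while you phrase everything via the canonical maps $\varphi_i$.
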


\begin{proof}
Let us first do the case when each $R_i$ is $n$-adically closed. For the proof of this lemma, we will need to look at the actual construction of $colim_{i \in I} R_i$. Note that the underlying set of $colim_{i \in I}R_i$ is $(\bigsqcup_{i \in I}R_i)/{\sim}$, where for $i, j \in I$, $r_i \in R_i, r_j \in R_j$, $(r_i,i) \sim (r_j,j)$ if and only if there exists $k \in I$ such that $i,j \leq k$ and $\varphi_{ik}(r_i) = \varphi_{jk}(r_j)$. Then denoting the equivalence class of $(r_i,i)$ as $[r_i,i]$, we define addition and multiplication as follows: $[r_i,i] + [r_j,j] = [\varphi_{ik}(r_i) + \varphi_{jk}(r_j), k]$ and $[r_i, i][r_j,j] = [\varphi_{ik}(r_i)\varphi_{jk}(r_j),k]$ for $i,j \leq k$ (note such a $k$ exists because $I$ is directed). Now let $x^n + [r_{i_{n-1}}, i_{n-1}]x^{n-1} + \cdots + [r_{i_1},i_1]x + [r_{i_0},i_0] \in colim_{i \in I}R_i[x]$ be an polynomial of degree $n$. Since $I$ is directed, there exists $k \in I$ such that $i_0, i_1, \cdots, i_{n-1} \leq k$. Then $x^n + [r_{i_{n-1}}, i_{n-1}]x^{n-1} + \cdots + [r_{i_1},i_1]x + [r_{i_0},i_0] =x^n + [\varphi_{i_{n-1}k}(r_{i_{n-1}}), k]x^{n-1} + \cdots + [\varphi_{i_1k}(r_{i_1}),k]x + [\varphi_{i_0k}(r_{i_0}),k]$. The polynomial $x^n + \varphi_{i_{n-1}k}(r_{i_{n-1}})x^{n-1} + \cdots + \varphi_{i_0k}(r_0)$ has a root $\alpha$ in $R_k$. Then by the definition of addition and multiplication above, it is then easy to see that $[\alpha,k]$ is a root of $x^n + [r_{i_{n-1}}, i_{n-1}]x^{n-1} + \cdots + [r_{i_1},i_1]x + [r_{i_0},i_0] $.
\end{proof}

\begin{corollary} 
\label{Stalks-of-n-adically-closed-rings}
Let $(X, \mathcal{O}_X)$ be a ringed space. If for all open $U \subset X$, $\mathcal{O}_X(U)$ is $n$-adically closed, then for all $x \in X$, the stalk $\mathcal{O}_{X,x}$ is $n$-adically closed. 
\end{corollary}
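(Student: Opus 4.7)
The plan is to realize the stalk $\mathcal{O}_{X,x}$ as a directed colimit of the rings $\mathcal{O}_X(U)$ ranging over open neighborhoods $U$ of $x$, and then invoke the preceding lemma \ref{colimits-directed-n-adically-closed-rings} directly.

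First I would let $I$ denote the set of open neighborhoods of $x$ in $X$, equipped with the partial order given by reverse inclusion: $U \leq V$ if and only if $V \subseteq U$. The poset $(I, \leq)$ is directed, since for any $U, V \in I$ the intersection $U \cap V$ is again an open neighborhood of $x$ and satisfies $U, V \leq U \cap V$ in this order.

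Next, for each pair $U \leq V$ in $I$ (so that $V \subseteq U$), the restriction map $\varphi_{UV} : \mathcal{O}_X(U) \rightarrow \mathcal{O}_X(V)$ is a ring homomorphism, and the presheaf axioms yield $\varphi_{UU} = \mathrm{id}_{\mathcal{O}_X(U)}$ and $\varphi_{VW} \circ \varphi_{UV} = \varphi_{UW}$ whenever $U \leq V \leq W$. Thus $(\mathcal{O}_X(U), \varphi_{UV})_{U,V \in I}$ is a directed system of rings and ring homomorphisms in exactly the sense required by \ref{colimits-directed-n-adically-closed-rings}, and by the standard construction of stalks one has $\mathcal{O}_{X,x} = \mathrm{colim}_{U \in I} \mathcal{O}_X(U)$. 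Since by hypothesis each $\mathcal{O}_X(U)$ is $n$-adically closed, \ref{colimits-directed-n-adically-closed-rings} immediately gives that $\mathcal{O}_{X,x}$ is $n$-adically closed.

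The hard part, such as it is, will be keeping straight the orientation of the partial order on $I$: because $\mathcal{O}_X(-)$ is contravariant in inclusion, we must order open neighborhoods by reverse inclusion so that the restriction maps play the role of the forward morphisms $\varphi_{ij}$ of \ref{colimits-directed-n-adically-closed-rings}. Beyond this small bookkeeping point, no substantive obstacle is expected, so the corollary is essentially a direct consequence of the colimit lemma combined with the standard description of stalks.
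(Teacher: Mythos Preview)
Your proposal is correct and follows exactly the paper's approach: the paper's proof is a single sentence observing that $\mathcal{O}_{X,x}$ is a directed colimit of the $n$-adically closed rings $\mathcal{O}_X(U)$ and invoking \ref{colimits-directed-n-adically-closed-rings}. You have simply unpacked this in more detail, spelling out the directed poset of open neighborhoods under reverse inclusion and the restriction maps; nothing further is needed.
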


\begin{proof}
Since the stalk $\mathcal{O}_{X,x}$ is the colimit of a directed system of rings and ring homomorphisms, where all the rings are $n$-adically closed, it follows by \ref{colimits-directed-n-adically-closed-rings} that $\mathcal{O}_{X,x}$ is $n$-adically closed.
\end{proof}

\begin{remark} 
\label{rem-converse-of-{Stalks-of-n-adically-closed-rings}}
Although arbitrary products of $n$-adically closed rings are $n$-adically closed, since subrings of $n$-adically closed rings need not be $n$-adically closed, one cannot immediately deduce the converse of \ref{Stalks-of-n-adically-closed-rings}, namely that if all the stalks of a ringed space $(X, \mathcal{O}_X)$ are $n$-adically closed, then for all open $U \subset X$, $\mathcal{O}_X(U)$ is $n$-adically closed. However, we have seen \ref{$n$-adically-closed-normal-schemes} that if $X$ is a normal, irreducible scheme, then the converse of \ref{Stalks-of-n-adically-closed-rings} does hold.
\end{remark}

\section{Irreducible intersection property}
\label{irreducible intersection property}

\noindent 
In \ref{irreducible-intersection-property-of-2n-adically-closed-rings} we proved that $2n$-adically closed rings have the property that the intersection of two irreducible closed subsets of their spectra is either empty or is irreducible. We will now prove a necessary and sufficient condition for a ring to have this property. In fact, as will be clear from the statement of the next lemma, we already used a weaker version this fact in the proof of \ref{intersections-of-primes}. But first, we make a provisional definition. Note that this definition is not standard in the literature.

\begin{definition} 
\label{irreducible-intersection-property}
Let $R$ be a ring. Then $R$ satisfies the \textbf{irreducible intersection property} if for any prime ideals $p_1, p_2 \subset R$, either $p_1 + p_2 = R$, or $p_1 + p_2$ is prime.
\end{definition}

\noindent
Examples of rings that satisfy the irreducible intersection property are fields, Artinian rings (because all prime ideals are maximal), Dedekind domains, $2n$-adically closed rings \ref{irreducible-intersection-property-of-2n-adically-closed-rings}, absolutely integrally closed rings, valuation rings (since prime ideals in a valuation ring are totally ordered by inclusion).

\begin{lemma} 
\label{necessary-sufficient-condition-for-iip}
Let $R$ be a ring. Then the following properties are equivalent:
\begin{enumerate}
\item[(a)] $R$ satisfies the irreducible intersection property.
\item[(b)] For all ideals $I \subsetneq R$, if $\Sigma = \{p \in Spec(R): p \subset I\} \neq \emptyset$, then $\Sigma$ has a unique maximal element with respect to inclusion.
\item[(c)] If $p_1, p_2$ are distinct prime ideals of $R$ such that $p_1 + p_2 \neq R$, then $\{p \in Spec(R) : p \subset p_1 + p_2\}$ has a unique maximal element with respect to inclusion.
\end{enumerate}
\end{lemma}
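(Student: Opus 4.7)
The plan is to prove the cycle (a) $\Rightarrow$ (b) $\Rightarrow$ (c) $\Rightarrow$ (a), with (b) $\Rightarrow$ (c) being essentially trivial (take $I = p_1 + p_2$). The two substantive directions are (a) $\Rightarrow$ (b) and (c) $\Rightarrow$ (a), and both rely on Zorn's lemma applied to sets of primes contained in a given proper ideal, together with the observation that the union of a chain of prime ideals is prime.

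For (a) $\Rightarrow$ (b), let $I \subsetneq R$ be a proper ideal and assume $\Sigma$ is nonempty. First I would verify that Zorn's lemma applies to $\Sigma$: any chain in $\Sigma$ has as upper bound the union of its members, which is a prime ideal still contained in $I$. Thus $\Sigma$ has at least one maximal element. For uniqueness, suppose $P_1$ and $P_2$ are two maximal elements with $P_1 \neq P_2$. Maximality forces them to be incomparable, so both are properly contained in $P_1 + P_2$. Since $P_1 + P_2 \subset I \neq R$, hypothesis (a) says $P_1 + P_2$ is prime, hence an element of $\Sigma$ strictly containing $P_1$, which contradicts the maximality of $P_1$.

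For (c) $\Rightarrow$ (a), the case $p_1 = p_2$ is immediate, so suppose $p_1, p_2$ are distinct primes with $p_1 + p_2 \neq R$. Let $P$ be the unique maximal element of $\Sigma := \{p \in \operatorname{Spec}(R) : p \subset p_1 + p_2\}$ given by (c). The key step is to prove $P = p_1 + p_2$. Since $p_1 \in \Sigma$, Zorn's lemma (applied to the inductive subset $\{p \in \Sigma : p_1 \subset p\}$) supplies a maximal element of $\Sigma$ containing $p_1$; by uniqueness this must be $P$, so $p_1 \subset P$. The same argument gives $p_2 \subset P$, hence $p_1 + p_2 \subset P \subset p_1 + p_2$, and therefore $p_1 + p_2 = P$ is prime.

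The only potential obstacle is ensuring the two Zorn arguments in (c) $\Rightarrow$ (a) really yield maximal elements of all of $\Sigma$ (not just of the sub-poset of primes containing $p_i$), but this follows because any element of $\Sigma$ properly containing such a maximal element would still contain $p_i$, contradicting maximality in the sub-poset. The rest is bookkeeping.
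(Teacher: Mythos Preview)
Your proposal is correct and follows essentially the same route as the paper: the same cycle (a) $\Rightarrow$ (b) $\Rightarrow$ (c) $\Rightarrow$ (a), the same Zorn-plus-contradiction argument for (a) $\Rightarrow$ (b), and the same sub-poset $\{p \in \Sigma : p_i \subset p\}$ trick for (c) $\Rightarrow$ (a). You are in fact slightly more careful than the paper in two places: you explicitly justify that Zorn applies (union of a chain of primes is prime), and you separate out the trivial case $p_1 = p_2$ in (c) $\Rightarrow$ (a), where strictly speaking hypothesis (c) cannot be invoked.
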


\begin{proof}
(a) $\Rightarrow$ (b): Suppose $I \subsetneq R$ is an ideal such that $\Sigma = \{p \in Spec(R): p \subset I\} \neq \emptyset$. Then $\Sigma$ has maximal elements by Zorn's Lemma. Assume for contradiction that $\Sigma$ has more than one maximal element. Then let $p_1, p_2$ be two distinct maximal elements of $\Sigma$. Then $p_1 + p_2 \subset I$, so by (a), $p_1 + p_2$ is prime. Hence, $p_1 + p_2 \in \Sigma$. But $p_1, p_2 \subsetneq p_1 + p_2$, contradicting the maximality of $p_1, p_2$.

\medskip\noindent
(b) $\Rightarrow$ (c): Trivial.

\medskip\noindent
(c) $\Rightarrow$ (a): Let $p_1, p_2$ be prime ideals of $R$. Suppose $p_1 + p_2 \neq R$. Then it suffices to show that $p_1 + p_2$ is prime. Let $\Sigma = \{p \in Spec(R): p \subset p_1 + p_2\}$. By (c), $\Sigma$ has a unique maximal element with respect to inclusion. Let this maximal element be $P$. Let $\Gamma = \{p \in Spec(R): p_1 \subset p \subset p_1 + p_2\}$. Then $\Gamma \neq \emptyset$, because $p_1 \in \Gamma$. By Zorn's Lemma, $\Gamma$ has maximal elements with respect to inclusion. But any maximal element of $\Gamma$ is also a maximal element of $\Sigma$. This proves that $p_1 \subset P$. Similarly, $p_2 \subset P$. Thus, $p_1 + p_2 \subset P$. By our choice of $P$, we get $p_1 + p_2 = P$. Since $P$ is prime, we are done.
\end{proof}

\begin{lemma} 
\label{constructions-preserving-iip}
The irreducible intersection property is preserved under quotients, localization, and finite products.
\end{lemma}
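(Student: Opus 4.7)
The plan is to verify each of the three closure properties separately by tracing how prime ideals behave under each operation, and then invoking the property from \ref{irreducible-intersection-property} for the original ring(s).

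For a quotient $R/I$, I would use the order-preserving bijection between primes of $R/I$ and primes of $R$ containing $I$. Two primes $\overline{p_1}, \overline{p_2} \subset R/I$ lift to primes $p_1, p_2 \supset I$ in $R$, and $\overline{p_1} + \overline{p_2} = (p_1+p_2)/I$. If $p_1 + p_2 = R$ then $\overline{p_1} + \overline{p_2} = R/I$; otherwise the hypothesis on $R$ makes $p_1 + p_2$ prime in $R$, and since it contains $I$, its image $(p_1+p_2)/I$ is prime in $R/I$. For a localization $S^{-1}R$, I would use the order-preserving bijection between primes of $S^{-1}R$ and primes of $R$ disjoint from $S$. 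If $P_1, P_2 \subset S^{-1}R$ correspond to $p_1, p_2 \subset R$, then $P_1 + P_2 = S^{-1}(p_1+p_2)$. The key observation is that for any ideal $J \subset R$, $S^{-1}J = S^{-1}R$ iff $J \cap S \neq \emptyset$. Hence if $P_1 + P_2 \neq S^{-1}R$, then $(p_1+p_2) \cap S = \emptyset$, which forces $p_1 + p_2 \neq R$; the hypothesis on $R$ then makes $p_1 + p_2$ prime, and $S^{-1}(p_1+p_2) = P_1 + P_2$ is therefore prime in $S^{-1}R$.

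For finite products, I would induct on the number of factors to reduce to $R = R_1 \times R_2$, with both $R_1, R_2$ satisfying the property. Recall that the primes of $R_1 \times R_2$ are precisely those of the form $p_1 \times R_2$ with $p_1 \in Spec(R_1)$ or $R_1 \times p_2$ with $p_2 \in Spec(R_2)$. Three cases arise. If both chosen primes are of the first form, their sum equals $(p_1 + p_1') \times R_2$, and the hypothesis on $R_1$ finishes the case; two primes of the second form is symmetric. If one is $p_1 \times R_2$ and the other is $R_1 \times p_2$, then $(0,1)$ and $(1,0)$ lie in the respective ideals, so their sum contains $(1,1) = 1$ and equals all of $R_1 \times R_2$. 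The only mildly subtle step across the three arguments is the localization one, where the iff characterization of when $S^{-1}J = S^{-1}R$ is what prevents the undesired possibility $p_1 + p_2 = R$; the quotient and product cases are essentially mechanical once the correct correspondence of primes is set up.
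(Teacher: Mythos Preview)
Your proposal is correct and follows essentially the same approach as the paper: each of the three cases is handled by the standard correspondence of primes under the respective construction, reducing to the irreducible intersection property for the original ring(s). Your localization argument is in fact slightly more explicit than the paper's (which simply says the proof is ``identical'' to the quotient case), since you spell out that $(p_1+p_2)\cap S=\emptyset$ is needed both to conclude $p_1+p_2\neq R$ and to ensure that the extension of the prime $p_1+p_2$ remains prime in $S^{-1}R$.
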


\begin{proof}
Let $R$ be a ring that satisfies the irreducible intersection property. Let $I$ be an ideal of $R$. Every prime ideal of $R/I$ is of the form $p(R/I)$, where $p$ is a prime ideal of $R$ that contains $I$. Let $p_1(R/I), p_2(R/I)$ be two prime ideals of $R/I$, and suppose that $p_1(R/I) + p_2(R/I) = (p_1 + p_2)R/I \neq R/I$. Then $p_1 + p_2 \neq R$. Thus, $p_1 + p_2$ is prime, which means that $p_1(R/I) + p_2(R/I)$ is prime. This proves that $R/I$ satisfies the irreducible intersection property.

\medskip\noindent
Let $S \subset R$ be a multiplicative subset. Then every prime ideal of $S^{-1}R$ is of the form $p(S^{-1}R)$, where $p$ is a prime ideal of $R$ such that $p \cap S = \emptyset$. If $p_1(S^{-1}R), p_2(S^{-1}R)$ are two prime ideals of $S^{-1}R$, we again have $p_1(S^{-1}R) + p_2(S^{-1}R) = (p_1 + p_2)S^{-1}R$. Then the proof that $S^{-1}R$ satisfies the irreducible intersection property is identical to the one above for quotients.

\medskip\noindent
Let $R_1, \cdots, R_n$ be rings that satisfy the irreducible intersection property. Then we need to show that $R_1 \times \cdots \times R_n$ satisfies the irreducible intersection property. By induction on $n$, we reduce to the case where $n = 2$. Note that prime ideals of $R_1 \times R_2$ are of the form $p \times R_2$, where $p$ is a prime ideal of $R_1$, or $R_1 \times q$ where $q$ is a prime ideal of $R_2$. It is easy to see that $p \times R_2 + R_1 \times q = R_1 \times R_2$. Hence, given prime ideals $p_1 \times R_2, p_2 \times R_2, R_1 \times q_1, R_1 \times q_2$, it suffices to show that $p_1 \times R_2 + p_2 \times R_2$ is prime or equal to $R_1 \times R_2$, and $R_1 \times q_1 + R_2 \times q_2$ is prime or equal to $R_1 \times R_2$. But this follows from the observations that $p_1 \times R_2 + p_2 \times R_2 = (p_1 + p_2) \times R_2$, $R_1 \times q_1 + R_1 \times q_2 = R_1 \times (q_1 + q_2)$, and the fact that $R_1, R_2$ satisfy the irreducible intersection property.
\end{proof}

\begin{remark} 
\label{rem-iip}
The above lemma shows that the image, under a ring map, of a ring satisfying the irreducible intersection property also satisfies the irreducible intersection property.
\end{remark}

\begin{lemma} 
\label{fibers-of-maps-to-rings-with-iip}
Let $A \rightarrow B$ be a ring map such that $B$ satisfies the irreducible intersection property. Let $p \subset A$ be a prime ideal. Then $B \otimes_A \kappa(p)$ satisfies the irreducible intersection property, where $\kappa(p) = Frac(A/p)$. Hence, the fibers of $A \rightarrow B$ satisfy the irreducible intersection property.
\end{lemma}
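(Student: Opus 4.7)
The plan is to realize $B \otimes_A \kappa(p)$ as a localization of a quotient of $B$, and then invoke the preceding lemma \ref{constructions-preserving-iip}, which states that the irreducible intersection property is preserved under quotients and localizations.

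More precisely, I would first note the standard identification
\[
B \otimes_A \kappa(p) \;\cong\; S^{-1}(B/pB),
\]
where $S$ denotes the image of the multiplicative set $A \setminus p$ under the composition $A \to B \to B/pB$. This is the same identification used in lemma \ref{fibers-of-maps-to-quadratically-closed-rings}: one writes $\kappa(p) = (A \setminus p)^{-1}(A/p)$, tensors $B$ first with $A/p$ to obtain $B/pB$, and then base changes along $A/p \to \kappa(p)$ to invert the image of $A \setminus p$.

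Given this identification, the argument is a one-line application of the structural lemma. By hypothesis $B$ satisfies the irreducible intersection property, so by \ref{constructions-preserving-iip} (preservation under quotients) the ring $B/pB$ also satisfies it. Applying \ref{constructions-preserving-iip} again (preservation under localization), the ring $S^{-1}(B/pB) \cong B \otimes_A \kappa(p)$ satisfies the irreducible intersection property. For the final sentence about fibers, one just notes that the fiber of $\operatorname{Spec}(B) \to \operatorname{Spec}(A)$ over the point corresponding to $p$ is $\operatorname{Spec}(B \otimes_A \kappa(p))$, so its ring of global sections has already been shown to satisfy the irreducible intersection property.

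There is no real obstacle here; the only thing to be careful about is the identification of $B \otimes_A \kappa(p)$ with $S^{-1}(B/pB)$, which follows from the transitivity of tensor products and the description of $\kappa(p)$ as a localization of $A/p$. Everything else is purely an invocation of the previously established closure properties.
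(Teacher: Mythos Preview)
Your proposal is correct and matches the paper's own proof essentially verbatim: the paper writes $B \otimes_A \kappa(p) \cong (A/p - 0)^{-1}(B/pB)$ and then cites \ref{constructions-preserving-iip}. You simply give a bit more detail on the identification and the two separate applications (quotient, then localization), which is fine.
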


\begin{proof}
We have $B \otimes_A \kappa(p) \cong (A/p - 0)^{-1}(B/pB)$, and the latter ring satisfies the irreducible intersection property by \ref{constructions-preserving-iip}.
\end{proof}

\begin{lemma} 
\label{joins-and-iip}
Let $R$ be a domain that the satisfies the irreducible intersection property. Let $p, q$ be prime ideals of $R$. Then the join $[R_p, R_q]$ in $Frac(R)$ is a local ring.
\end{lemma}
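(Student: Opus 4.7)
The plan is to adapt the proof of \ref{joins-of-2n-adically-closed-rings} almost verbatim, replacing the appeal to \ref{intersections-of-primes} with an appeal to the general characterization \ref{necessary-sufficient-condition-for-iip}, which is precisely the mechanism by which \ref{intersections-of-primes} was obtained in the $2n$-adically closed case.

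First I would set $S = R - p$ and $T = R - q$ and apply \ref{localization-and-joins} to identify $[R_p, R_q] = (ST)^{-1}R$ inside $\operatorname{Frac}(R)$. Next I would let $I = p \cap q$. Since $R$ is a domain, $(0) \in \operatorname{Spec}(R)$ and $(0) \subset I$, so the set $\Sigma = \{\mathfrak{p} \in \operatorname{Spec}(R) : \mathfrak{p} \subset I\}$ is nonempty; also $I \subsetneq R$ because $p, q$ are prime. By \ref{necessary-sufficient-condition-for-iip}(a)$\Rightarrow$(b), there is a unique maximal element $P \in \Sigma$.

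I would then verify that $P \cap ST = \emptyset$: if $st \in P$ with $s \in S$ and $t \in T$, primality of $P$ forces $s \in P \subset p$ or $t \in P \subset q$, contradicting the definitions of $S$ and $T$. Consequently $P(ST)^{-1}R$ is a prime ideal of $(ST)^{-1}R$. To finish, I would show this prime is the unique maximal ideal. Any prime $Q$ of $(ST)^{-1}R$ has the form $\overline{\mathfrak{q}}(ST)^{-1}R$ for a prime $\overline{\mathfrak{q}} \subset R$ with $\overline{\mathfrak{q}} \cap ST = \emptyset$. Since $S, T \subset ST$, this gives $\overline{\mathfrak{q}} \cap S = \emptyset = \overline{\mathfrak{q}} \cap T$, i.e.\ $\overline{\mathfrak{q}} \subset p$ and $\overline{\mathfrak{q}} \subset q$, so $\overline{\mathfrak{q}} \in \Sigma$. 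By maximality of $P$ in $\Sigma$, $\overline{\mathfrak{q}} \subset P$, whence $Q \subset P(ST)^{-1}R$.

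There is no real obstacle here: the only substantive input beyond the localization bookkeeping is the existence of a unique maximal element of $\Sigma$, which is exactly the content of \ref{necessary-sufficient-condition-for-iip}(b). Thus this lemma should be viewed as the natural companion to \ref{joins-of-2n-adically-closed-rings}, showing that the semi-locality conclusion depends only on the irreducible intersection property and not on the stronger $2n$-adic closure hypothesis.
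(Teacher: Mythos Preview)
Your proposal is correct and is precisely the paper's approach: the paper's proof simply invokes \ref{necessary-sufficient-condition-for-iip} to obtain a unique maximal element of $\{P \in \operatorname{Spec}(R) : P \subset p \cap q\}$ and then says the rest follows verbatim from \ref{joins-of-2n-adically-closed-rings}, which is exactly what you have spelled out. Your explicit verification that $\Sigma \neq \emptyset$ via $(0) \in \Sigma$ (using that $R$ is a domain) is a detail the paper leaves implicit but which is indeed needed to apply \ref{necessary-sufficient-condition-for-iip}(b).
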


\begin{proof}
By \ref{necessary-sufficient-condition-for-iip}, $\{P \in Spec(R) : P \subset p \cap q\}$ has a unique maximal element with respect to inclusion. The rest of the proof is then exactly the same as \ref{joins-of-2n-adically-closed-rings}.
\end{proof}

\begin{lemma} 
\label{pushing-forward-iip}
Let $A$ be a ring that satisfies the irreducible intersection property. Let $\varphi: A \hookrightarrow B$ be an injective ring map that satisfies the following properties:
\begin{enumerate}
\item[(i)] $\varphi$ satisfies the Lying-Over property, that is for every prime ideal $p \subset A$, there exists a prime ideal $q \subset B$ such that $q \cap A = p$.
\item[(ii)] For every prime $q \subset B$, $(q \cap A)B = q$, that is the extension of the contraction of $q$ is $q$ itself.
\end{enumerate}
Then $B$ satisfies the irreducible intersection property. (Note that (i) and (ii) imply that there exists a unique prime ideal lying over every prime ideal of A, i.e., the map $Spec(B) \rightarrow Spec(A)$ is a bijection.)
\end{lemma}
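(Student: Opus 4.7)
The plan is to reduce everything to the irreducible intersection property of $A$ by using hypothesis (ii) to express every prime of $B$ as the extension of its contraction. Given two prime ideals $q_1, q_2 \subset B$, I would first set $p_i := q_i \cap A$ for $i = 1, 2$. By hypothesis (ii), $q_i = p_i B$, and therefore
\begin{equation*}
q_1 + q_2 = p_1 B + p_2 B = (p_1 + p_2)B.
\end{equation*}
This is the key identity: the sum of two primes in $B$ is just the extension of the sum of their contractions.

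Next I would apply the irreducible intersection property of $A$ to the primes $p_1, p_2$: either $p_1 + p_2 = A$, or $p_1 + p_2$ is a prime ideal of $A$. In the first case, extending to $B$ immediately gives $q_1 + q_2 = (p_1 + p_2)B = B$, so there is nothing to prove. In the second case, set $p := p_1 + p_2$, which is a proper prime ideal of $A$. By the Lying-Over hypothesis (i), there exists a prime ideal $q \subset B$ with $q \cap A = p$. Applying hypothesis (ii) to this $q$ gives $q = pB = (p_1 + p_2)B = q_1 + q_2$. In particular, $q_1 + q_2$ is prime, which is exactly what is required for $B$ to satisfy the irreducible intersection property.

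There is essentially no obstacle here once one notices that hypothesis (ii) linearizes the sum of primes under extension; the whole argument is a two-line translation between $\mathrm{Spec}(A)$ and $\mathrm{Spec}(B)$ via the bijection implied by (i) and (ii). I would not need to invoke injectivity of $\varphi$ directly in the argument, though injectivity is implicit in the parenthetical remark that $\mathrm{Spec}(B) \to \mathrm{Spec}(A)$ is a bijection. The only mildly subtle point worth flagging explicitly in the write-up is that in the prime case the extension $(p_1+p_2)B$ is automatically proper, because the prime $q$ furnished by Lying-Over is proper and contains it — otherwise one might momentarily worry that $p_1 + p_2$ could extend to the unit ideal of $B$.
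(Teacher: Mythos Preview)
Your proof is correct, and it is genuinely more direct than the route taken in the paper. The paper does not argue directly from the definition of the irreducible intersection property; instead it invokes the equivalent criterion established earlier (Lemma~\ref{necessary-sufficient-condition-for-iip}(b)): a ring has the property if and only if, for every proper ideal $I$ with at least one prime below it, the set of primes contained in $I$ has a unique maximal element. The paper then takes an arbitrary proper ideal $I\subsetneq B$, contracts it to $J=I\cap A$, uses the irreducible intersection property of $A$ to obtain a unique maximal prime $P\subset J$, lifts $P$ to $Q=PB$ via (i) and (ii), and finally checks that $Q$ is the unique maximal element of $\{q\in\mathrm{Spec}(B):q\subset I\}$ by chasing contractions and extensions back and forth.

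Your argument bypasses this machinery entirely by observing that (ii) forces $q_1+q_2=(p_1+p_2)B$, after which the two cases of the irreducible intersection property for $A$ translate immediately to $B$ via extension and one use of Lying-Over. This is shorter and conceptually cleaner; the only thing the paper's approach buys is that it illustrates how the characterization in Lemma~\ref{necessary-sufficient-condition-for-iip} can be used in practice. Your remark that injectivity of $\varphi$ is not actually needed in the argument is also accurate.
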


\begin{proof} Let $I \subsetneq B$ be an ideal such that $\Sigma = \{q \in Spec(B) : q \subset I\} \neq \emptyset$. By \ref{necessary-sufficient-condition-for-iip} it suffices to show that $\Sigma$ has a unique maximal element with respect to inclusion. If $J = I \cap A$, then $J \neq A$, and $\Gamma = \{p \in Spec(A): p \subset J\} \neq \emptyset$. Since $A$ satisfies the irreducible intersection property, by \ref{necessary-sufficient-condition-for-iip} we get that $\Gamma$ has a unique maximal element with respect to inclusion, say $P$. By (i), there exists a prime ideal $Q \subset B$ such that $P = Q \cap A$. Then by (ii), $Q = PB \subset I$. Thus, $Q \in \Sigma$. Suppose that $Q$ is not a maximal element of $\Sigma$. Then there exists an element $Q' \in \Sigma$ such that $Q \subsetneq Q'$. As a result, we get $P = Q \cap A \subset Q' \cap A$. Since $Q' \cap A \in \Gamma$ and $P$ is the unique maximal element of $\Gamma$, we get $P = Q' \cap A$. By (ii), $Q' = PB = Q$, a contradiction. Thus, $Q$ is a maximal element of $\Sigma$. Let $Q'$ be any maximal element of $\Sigma$. Then $Q' \cap A$ is a maximal element of $\Gamma$, for is not, there exists $P'$ such that $Q' \cap A \subsetneq P'$, and there exists $Q'' \in \Sigma$ such that $Q'' \cap A = P'$. Then $Q' = (Q' \cap A)B \subset P'B = Q''$, and by the maximality of $Q'$, we get $Q' = Q''$, proving that $Q' \cap A = P'$, a contradiction. Since $\Sigma$ has a unique maximal element, we get that any maximal element of $\Gamma$ lies over $P$. Hence every maximal element of $\Gamma$ equals $Q$, completing the proof.
\end{proof}

\begin{lemma} 
\label{pullin-back-iip}
Let $\varphi: A \rightarrow B$ be a ring map satisfying the following conditions:
\begin{enumerate}
\item[(i)] For all prime ideals $p \subset A$, $pB$ is a prime ideal.
\item[(ii)] For all ideals $I \subset A$, $IB \cap A = I$.
\end{enumerate}
If $B$ satisfies the irreducible intersection property, then $A$ satisfies the irreducible intersection property.
\end{lemma}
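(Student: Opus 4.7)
The plan is to transfer the irreducible intersection property from $B$ back to $A$ by showing that the sum $p_1 + p_2$ of two prime ideals of $A$ (assumed proper) corresponds, under extension to $B$, to a proper sum of two prime ideals of $B$, to which the hypothesis on $B$ can be applied; contracting back then gives the conclusion for $A$.

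First I would fix prime ideals $p_1, p_2 \subset A$ with $p_1 + p_2 \neq A$ and consider the extensions $p_1 B$ and $p_2 B$. By hypothesis (i), both $p_1 B$ and $p_2 B$ are prime ideals of $B$, and clearly $p_1 B + p_2 B = (p_1 + p_2)B$. The next step is to observe that $(p_1 + p_2)B \neq B$: indeed, if $(p_1+p_2)B = B$, then using hypothesis (ii) with $I = p_1 + p_2$ we would get $p_1 + p_2 = (p_1 + p_2)B \cap A = B \cap A = A$, contradicting our standing assumption. So the sum $p_1 B + p_2 B$ is a proper ideal of $B$, and since $B$ satisfies the irreducible intersection property, $(p_1 + p_2)B$ is a prime ideal of $B$.

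Finally I would contract back to $A$. In general, the contraction of a prime ideal under any ring map is prime; applying this to the prime ideal $(p_1+p_2)B \subset B$ gives that $(p_1+p_2)B \cap A$ is prime in $A$. But hypothesis (ii) tells us this contraction equals $p_1 + p_2$, so $p_1 + p_2$ is prime in $A$, as required. By \ref{necessary-sufficient-condition-for-iip} (or directly from Definition \ref{irreducible-intersection-property}), this establishes that $A$ satisfies the irreducible intersection property.

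There is no real obstacle here; the argument is a direct extension-contraction sandwich. The only point that must be handled carefully is to verify properness of $(p_1+p_2)B$ in $B$ before invoking the irreducible intersection property of $B$, and for this the full strength of hypothesis (ii) applied to the ideal $p_1 + p_2$ (rather than just to the individual primes) is essential.
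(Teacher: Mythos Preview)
Your proof is correct and follows essentially the same extension--contraction argument as the paper: extend $p_1,p_2$ to primes of $B$ via (i), use (ii) to identify $(p_1+p_2)B \cap A = p_1+p_2$, apply the irreducible intersection property of $B$, and contract. The only cosmetic difference is that the paper handles both alternatives of the disjunction simultaneously rather than assuming $p_1+p_2 \neq A$ at the outset, but the content is identical.
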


\begin{proof}
Let $p_1, p_2 \subset A$ be prime ideals. By (i), $p_1B, p_2B$ are prime ideals, and by (ii), $(p_1B + p_2B) \cap A = (p_1 + p_2)B \cap A = p_1 + p_2$. Since either $p_1B + p_2B = B$ or $p_1B + p_2B$ is prime, we get either $p_1 + p_2 = A$ or $p_1 + p_2$ is prime.
\end{proof}

\begin{corollary} 
\label{some-oberservations}
Let $A$ be a ring. 
\begin{enumerate}
\item[(a)] If the polynomial ring $A[x_1,\cdots,x_n]$ in $n$ variables satisfies the irreducible intersection property, then $A$ satisfies the irreducible intersection property.
\item[(b)] If the power series ring $A[[x_1, \cdots, x_n]]$ satisfies the irreducible intersection property, then so does $A$.
\end{enumerate}
\end{corollary}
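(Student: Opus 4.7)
The plan is to realize $A$ as a quotient of the larger ring in each case and then invoke Lemma \ref{constructions-preserving-iip}, which guarantees that the irreducible intersection property passes to quotients. This is more elegant than the seemingly natural alternative of checking the hypotheses of Lemma \ref{pullin-back-iip} for the inclusions $A \hookrightarrow A[x_1,\ldots,x_n]$ or $A \hookrightarrow A[[x_1,\ldots,x_n]]$.

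For (a), I would consider the augmentation $\pi : A[x_1, \ldots, x_n] \to A$ defined by $x_i \mapsto 0$. This is a surjective ring homomorphism (in fact a retraction of the canonical inclusion) with kernel $(x_1, \ldots, x_n)$, so $A \cong A[x_1, \ldots, x_n]/(x_1, \ldots, x_n)$. Since $A[x_1, \ldots, x_n]$ satisfies the irreducible intersection property by hypothesis, Lemma \ref{constructions-preserving-iip} immediately yields the irreducible intersection property for $A$.

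For (b), the same strategy applies using the evaluation-at-origin map $A[[x_1, \ldots, x_n]] \to A$ sending a formal power series to its constant term. This map is clearly a surjective ring homomorphism. The only substantive verification is that its kernel, namely the set of power series with vanishing constant term, coincides with the ideal $(x_1, \ldots, x_n)$ of $A[[x_1, \ldots, x_n]]$ (containment in one direction is obvious). Given $f \in A[[x_1, \ldots, x_n]]$ with $f(0) = 0$, one partitions the non-constant monomials of $f$ by the smallest index $j$ of a variable dividing that monomial, factors an $x_j$ out of each, and sums the resulting series; this expresses $f = x_1 g_1 + \cdots + x_n g_n$ with each $g_j \in A[[x_1, \ldots, x_n]]$, where well-definedness of the $g_j$ as power series is immediate because each monomial contributes to exactly one $g_j$. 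Hence $A \cong A[[x_1, \ldots, x_n]]/(x_1, \ldots, x_n)$, and Lemma \ref{constructions-preserving-iip} applies again.

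No serious obstacle is expected. The only step that deserves a moment of care is the identification of the kernel in part (b); this is exactly the issue that would make a direct application of Lemma \ref{pullin-back-iip} delicate, since for a non-finitely generated prime $p \subset A$ the extension $p \cdot A[[x_1, \ldots, x_n]]$ need not coincide with the set of power series whose coefficients all lie in $p$, making hypothesis (i) of Lemma \ref{pullin-back-iip} genuinely nontrivial to check. The quotient approach circumvents this entirely because we need only extend the single ideal $(x_1, \ldots, x_n)$, which is finitely generated by construction.
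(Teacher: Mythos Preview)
Your proof is correct and takes a genuinely different route from the paper. The paper applies Lemma \ref{pullin-back-iip} to the natural inclusions $A \hookrightarrow A[x_1,\ldots,x_n]$ and $A \hookrightarrow A[[x_1,\ldots,x_n]]$, asserting that conditions (i) and (ii) there hold. You instead exploit the retraction: since the augmentation realizes $A$ as a quotient of the larger ring, Lemma \ref{constructions-preserving-iip} applies directly.

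Your approach is the cleaner one, and your closing remark is on point. For the polynomial inclusion the paper's verification of (i) is unproblematic, since $A[x_1,\ldots,x_n]/pA[x_1,\ldots,x_n] \cong (A/p)[x_1,\ldots,x_n]$ is a domain. But for the power-series inclusion, the paper's claim that condition (i) holds is not obviously justified in the non-Noetherian case: as you note, when $p$ is not finitely generated the ideal $p\,A[[x_1,\ldots,x_n]]$ can be strictly smaller than the kernel of $A[[x_1,\ldots,x_n]] \to (A/p)[[x_1,\ldots,x_n]]$ (e.g.\ $A = k[t_1,t_2,\ldots]$, $p=(t_1,t_2,\ldots)$, and $\sum_{i\ge 1} t_i x^i$), so primality of the extended ideal is not immediate. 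Your quotient argument sidesteps this entirely, needing only the elementary fact that the augmentation kernel equals $(x_1,\ldots,x_n)$, which you verify correctly.
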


\begin{proof}
Let $\varphi: A \hookrightarrow A[x_1,\cdots,x_n]$, $\phi: A \hookrightarrow A[[x_1, \cdots, x_n]]$ be the natural inclusions. Then $\varphi, \phi$ satisfies properties (i) and (ii) of \ref{pullin-back-iip}, and we win.\end{proof}

\begin{lemma} 
\label{colimits-and-iip}
Let $I$ be a partially ordered set. Let $(R_i, \varphi_{ij})_{i, j \in I}$ be a directed system of rings and ring homomorphisms, indexed by $I$. If each $R_i$ satisfies the irreducible intersection property, then $colim_{i \in I} R_i$ satisfies the irreducible intersection property.
\end{lemma}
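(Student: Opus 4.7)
The plan is to reduce the question at a single element to a question at a single stage $R_l$, using the explicit construction of the directed colimit recalled in the proof of \ref{colimits-directed-n-adically-closed-rings}. Write $R = \mathrm{colim}_{i \in I} R_i$, with structure maps $\varphi_i : R_i \to R$. Given prime ideals $P_1, P_2 \subset R$ with $P_1 + P_2 \neq R$, the goal is to show $P_1 + P_2$ is prime, so we pick $x, y \in R$ with $xy \in P_1 + P_2$, write $xy = a + b$ with $a \in P_1$ and $b \in P_2$, and aim to conclude $x \in P_1 + P_2$ or $y \in P_1 + P_2$.

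First I would find a common stage. By directedness of $I$, pick an index $k$ and lifts $x_k, y_k, a_k, b_k \in R_k$ mapping to $x, y, a, b$ respectively. Since $\varphi_k(x_k y_k - a_k - b_k) = 0$ in $R$, by the description of equality in a directed colimit there exists $l \geq k$ such that, writing $x_l, y_l, a_l, b_l$ for the images of $x_k, y_k, a_k, b_k$ under $\varphi_{kl}$, we have the actual equation $x_l y_l = a_l + b_l$ in $R_l$.

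Next I would transport the primes. Set $p_j^{(l)} := \varphi_l^{-1}(P_j)$ for $j = 1, 2$; these are prime ideals of $R_l$ since $P_1, P_2$ are prime. By construction $a_l \in p_1^{(l)}$ and $b_l \in p_2^{(l)}$, so $x_l y_l \in p_1^{(l)} + p_2^{(l)}$. Moreover $p_1^{(l)} + p_2^{(l)} \neq R_l$: otherwise we could write $1 = a'_l + b'_l$ with $a'_l \in p_1^{(l)}, b'_l \in p_2^{(l)}$, and applying $\varphi_l$ would give $1 \in P_1 + P_2$, contradicting $P_1 + P_2 \neq R$. Since $R_l$ satisfies the irreducible intersection property, $p_1^{(l)} + p_2^{(l)}$ is therefore prime, and so $x_l$ or $y_l$ lies in $p_1^{(l)} + p_2^{(l)}$. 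Applying $\varphi_l$ gives $x \in P_1 + P_2$ or $y \in P_1 + P_2$, as desired.

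The only mildly delicate point is the bookkeeping needed to locate a common stage $l$ at which the equation $xy = a + b$ is already witnessed in $R_l$; this is standard for filtered colimits in the category of rings, and is essentially the same maneuver used in \ref{colimits-directed-n-adically-closed-rings}. Beyond that, the argument is purely formal, turning on the fact that the contraction of a prime along any ring map is again prime and that contractions commute with sums only up to inclusion $\varphi_l^{-1}(P_1) + \varphi_l^{-1}(P_2) \subseteq \varphi_l^{-1}(P_1 + P_2)$, which is precisely what is needed here.
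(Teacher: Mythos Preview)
Your proposal is correct and follows essentially the same approach as the paper's proof: lift the equation $xy = a+b$ to a finite stage $R_l$ where it holds on the nose, pull back the primes to $p_1^{(l)} = \varphi_l^{-1}(P_1)$ and $p_2^{(l)} = \varphi_l^{-1}(P_2)$, observe $p_1^{(l)} + p_2^{(l)} \neq R_l$ so that the irreducible intersection property at stage $l$ applies, and push the conclusion forward via $\varphi_l$. The paper's argument differs only in notation and in the bookkeeping of how the common stage is chosen.
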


\begin{proof}
For all $i \in I$, let $\varphi_i: R_i \rightarrow colim_{i \in I} R_i$ denote the  projection maps. Let $p, q$ be prime ideals of $colim_{i \in I} R_i$ such that $p + q \neq (1)$. It suffices to show that $p + q$ is prime. Let $[r_i, i], [r_j, j] \in colim_{i \in I} R_i$ such that $[r_i,i][r_j,j] \in p + q$. Then there exists $k \geq i,j$ and $a, b \in R_k$ such that $[a,k] \in p, [b,k] \in q$, and $[\varphi_{ik}(r_i)\varphi_{jk}(r_j), k] = [a,k] + [b,k] = [a + b, k]$. Hence, there exists $k \leq l$ such that $\varphi_{kl}(\varphi_{ik}(r_i)\varphi_{jk}(r_j)) = \varphi_{kl}(a + b) \Rightarrow \varphi_{il}(r_i)\varphi_{jl}(r_j) = \varphi_{kl}(a) + \varphi_{kl}(b)$.

\noindent
 Let $p_l = \varphi^{-1}_l(p)$, and $q_l = \varphi^{-1}_l(q)$. Then $p_l, q_l$ are prime, and moreover, $p_l + q_l \subset \varphi^{-1}_l(p + q) \neq R_l$. Thus, $p_l + q_l$ is prime since $R_l$ satisfies the irreducible intersection property. Since of $[a,k] = [\varphi_{kl}(a), l]$ and $[b,k] = [\varphi_{kl}(b), l]$ it follows that $\varphi_{kl}(a) \in p_l, \varphi_{kl}(b) \in q_l$. As a result we get, $\varphi_{il}(r_i)\varphi_{jl}(r_j) = \varphi_{kl}(a) + \varphi_{kl}(b) \in p_l + q_l$. Then we must have $\varphi_{il}(r_i) \in p_l + q_l$ or $\varphi_{jl}(r_j) \in p_l + q_l$. This means that $[r_i, i] = [\varphi_{il}(r_i), l] \in p + q$,  or $[r_j, j] = [\varphi_{jl}(r_j), l] \in p+q$.
\end{proof}

\noindent
As a consequence of the above lemma, we immediately obtain that,

\begin{corollary} 
\label{stalks-of-schemes-with-iip}
If $(X, \mathcal{O}_X)$ is a ringed space, such that for all open $U \subset X$, $\mathcal{O}_X(U)$ satisfies the irreducible intersection property, then for all $x \in X$, the stalk $\mathcal{O}_{X,x}$ satisfies the irreducible intersection property.
\end{corollary}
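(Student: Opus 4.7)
The plan is to reduce this immediately to Lemma \ref{colimits-and-iip}, which has already done all the heavy lifting. The key observation is that the stalk at a point of a ringed space is constructed as a directed colimit of section rings.

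First, I would recall the standard construction of the stalk: fix $x \in X$ and let $\mathcal{U}_x$ denote the collection of open neighborhoods of $x$ in $X$, partially ordered by reverse inclusion (so $U \leq V$ iff $V \subseteq U$). Then $\mathcal{U}_x$ is directed, since any two neighborhoods $U, V$ of $x$ admit the common refinement $U \cap V$, which is again a neighborhood of $x$. Together with the restriction maps $\mathcal{O}_X(U) \rightarrow \mathcal{O}_X(V)$ for $V \subseteq U$, this assembles into a directed system of rings and ring homomorphisms indexed by $\mathcal{U}_x$, and by definition
\[
\mathcal{O}_{X,x} \;=\; \mathrm{colim}_{U \in \mathcal{U}_x} \mathcal{O}_X(U).
\]

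Second, by hypothesis, for each $U \in \mathcal{U}_x$ the ring $\mathcal{O}_X(U)$ satisfies the irreducible intersection property. Therefore Lemma \ref{colimits-and-iip} applies directly to this directed system and yields that $\mathcal{O}_{X,x}$ satisfies the irreducible intersection property. That completes the proof.

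There is really no obstacle here; the only thing to check with any care is that the indexing set $\mathcal{U}_x$ is genuinely directed (which it is, via intersection) so that Lemma \ref{colimits-and-iip} is applicable. No additional hypothesis on the ringed space (such as being a scheme, integral, or locally ringed) is needed, since Lemma \ref{colimits-and-iip} was proved for arbitrary directed systems of rings.
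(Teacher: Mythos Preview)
Your proof is correct and is exactly the approach the paper takes: the corollary is stated as an immediate consequence of Lemma~\ref{colimits-and-iip}, using that the stalk is a directed colimit of the section rings. Your added verification that the index set of open neighborhoods is directed is the only detail one might want to spell out, and you have done so.
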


\section{Semi-local rings}
\label{semi-local-rings}

\noindent
Recall that a semi-local ring is a ring with finitely many maximal ideals. The next result shows us how to obtain semi-local rings from a given ring.

\begin{lemma} 
\label{semi-localization}
Let $A$ be a ring, and $p_1,\cdots,p_n$ be distinct prime ideals of $A$. Let $S = (A - p_1) \cap \cdots \cap (A - p_n)$. Then $S^{-1}A$ is a semi-local ring. 
\end{lemma}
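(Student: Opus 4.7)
The plan is to identify the maximal ideals of $S^{-1}A$ explicitly and show there are at most $n$ of them. First I would observe that $S = A - (p_1 \cup \cdots \cup p_n)$, so a prime ideal $q \subset A$ satisfies $q \cap S = \emptyset$ if and only if $q \subset p_1 \cup \cdots \cup p_n$.

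Next I would recall the standard correspondence between prime ideals of $S^{-1}A$ and prime ideals of $A$ disjoint from $S$: every prime of $S^{-1}A$ has the form $qS^{-1}A$ where $q \subset A$ is prime with $q \cap S = \emptyset$. Applying the prime avoidance lemma to the condition $q \subset p_1 \cup \cdots \cup p_n$, I get that $q \subset p_i$ for some $i \in \{1, \ldots, n\}$. Conversely, each $p_i$ itself is disjoint from $S$ (since $S \subset A - p_i$), so $p_iS^{-1}A$ is a prime ideal of $S^{-1}A$ for each $i$.

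Combining these observations, every prime ideal of $S^{-1}A$ is contained in $p_iS^{-1}A$ for some $i$. In particular, every maximal ideal of $S^{-1}A$ must be of the form $p_iS^{-1}A$ for some $i$ (it cannot be strictly contained in one of the $p_iS^{-1}A$ by maximality). This yields at most $n$ maximal ideals, proving that $S^{-1}A$ is semi-local. The maximal ideals are precisely $\{p_iS^{-1}A : p_i \textrm{ is maximal among } p_1, \ldots, p_n \textrm{ with respect to inclusion}\}$, but the bound by $n$ is all that the statement requires.

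There is no serious obstacle here; the only subtlety is noting that prime avoidance is what allows us to pass from $q \subset p_1 \cup \cdots \cup p_n$ to $q \subset p_i$ for some specific $i$, which is what makes the set of relevant primes have a finite cofinal family.
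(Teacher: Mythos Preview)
Your argument is correct and follows essentially the same route as the paper: both use the prime correspondence for localizations together with prime avoidance to show that every prime of $S^{-1}A$ is contained in some $p_iS^{-1}A$, hence the maximal ideals lie among the $p_iS^{-1}A$. Your additional remark identifying exactly which $p_iS^{-1}A$ are maximal matches the paper's follow-up observation in the subsequent remark.
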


\begin{proof}
Since $p_i \cap S = \emptyset$ for all $i$, it follows that $p_i(S^{-1}A)$ are prime ideals of $S^{-1}A$. Let $q$ be a prime ideal of $S^{-1}A$ and $p = q \cap A$. Then $q = p(S^{-1}A)$, and $p \cap S = \emptyset$. Thus, $p \subset p_1 \cup \cdots \cup p_n$. Hence by Prime Avoidance it follows that $p \subset p_i$ for some $i$. In particular, this mean that $q \subset p_i(S^{-1}A)$. This shows that the maximal ideals of $S^{-1}A$ are among the $p_i(S^{-1}A)$.
\end{proof}

\begin{remark} 
\label{rem-maximal-ideals-semi-localization}
In the lemma above, if there are no inclusions among the $p_i$, then the $p_i(S^{-1}A)$ are the maximal ideals of $S^{-1}A$.
\end{remark}

\begin{definition} 
\label{def-semi-localization}
Let $A$ be a ring, and $p_1,\cdots,p_n$ be distinct prime ideals of $A$ such that $p_i \nsubseteq p_j$ for all $i \neq j$. By the \textbf{semi-local ring at $(p_1,\cdots,p_n)$} we mean that ring $S^{-1}A$, where $S = (A - p_1) \cap \cdots \cap (A - p_n)$.
\end{definition}

\begin{lemma} 
\label{localization-of-localized-rings-at-primes}
Let $A$ be a ring, $S$ a multiplicative subset, and $p \subset A$ a prime ideal such that $p \cap S = \emptyset$. Then $S^{-1}A_{p(S^{-1}A)} \cong A_p$.
\end{lemma}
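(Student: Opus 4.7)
The plan is to show both rings satisfy the universal property of the localization $A \to A_p$, namely being the initial $A$-algebra in which every element of $A - p$ becomes invertible. Since localization is determined up to unique isomorphism by this universal property, the isomorphism will follow formally.

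First, I would observe that the hypothesis $p \cap S = \emptyset$ guarantees that $p(S^{-1}A)$ is a prime ideal of $S^{-1}A$ (this is standard), so the notation $S^{-1}A_{p(S^{-1}A)}$ is meaningful. Consider the composite $\psi: A \to S^{-1}A \to S^{-1}A_{p(S^{-1}A)}$. For any $a \in A - p$, the image $a/1$ in $S^{-1}A$ lies outside $p(S^{-1}A)$ (using $p \cap S = \emptyset$ to check this carefully), and hence becomes a unit in $S^{-1}A_{p(S^{-1}A)}$. Thus $\psi$ sends $A - p$ into the units.

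Now I would verify the universal property. Suppose $\varphi: A \to B$ is any ring map with $\varphi(A - p) \subset B^{\times}$. Since $S \subset A - p$, in particular $\varphi(S) \subset B^{\times}$, so by the universal property of $S^{-1}A$ there is a unique factorization $\widetilde{\varphi}: S^{-1}A \to B$. An arbitrary element of the multiplicative set $S^{-1}A - p(S^{-1}A)$ has the form $a/s$ with $a \in A - p$ and $s \in S \subset A - p$; under $\widetilde{\varphi}$ it maps to $\varphi(a)\varphi(s)^{-1}$, which is a unit in $B$. So $\widetilde{\varphi}$ factors uniquely through $S^{-1}A_{p(S^{-1}A)}$, yielding the required unique factorization of $\varphi$ through $\psi$. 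Since $A_p$ also satisfies this exact universal property, we obtain a canonical isomorphism $S^{-1}A_{p(S^{-1}A)} \cong A_p$.

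There is no serious obstacle; the only point requiring a moment's care is verifying that elements of the form $a/s$ with $a \notin p$, $s \in S$ really exhaust $S^{-1}A - p(S^{-1}A)$, which amounts to the standard correspondence between primes of $S^{-1}A$ and primes of $A$ disjoint from $S$. If one preferred an explicit description over the universal property, one could alternatively write down the map $A_p \to S^{-1}A_{p(S^{-1}A)}$ sending $a/t$ (with $t \notin p$) to $(a/1)/(t/1)$ and the inverse sending $(a/s)/(b/t)$ (with $b \notin p$, $s,t \in S$) to $(at)/(bs) \in A_p$, and then check well-definedness and mutual inverseness by direct computation; but the universal property argument bypasses these calculations entirely.
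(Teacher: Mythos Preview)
Your argument via the universal property of localization is correct and cleanly executed; the verification that $a/1 \notin p(S^{-1}A)$ for $a \in A - p$ and that $S \subset A - p$ are exactly the places where the hypothesis $p \cap S = \emptyset$ enters, and you have identified both. The paper itself omits the proof entirely, so there is no approach to compare against; your universal-property argument (or the explicit inverse you sketch at the end) is a standard way to fill this gap.
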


\begin{proof}
Omitted.
\end{proof}

\section{Polygons in Quadratically closed rings}
\label{Polygons in Quadratically closed rings}

\noindent
We now introduce the notion of a polygon in a ring. We will give the definition in [\cite{Art}, 1.7(ii)]. Our goal will be to prove that quadratically closed rings do not have polygons. This is already shown in [\cite{Art}, 1.7(ii)]. However, we hope to give a more direct proof, without using \'etale algebras.

\begin{definition} 
\label{def-polygons}
Let $R$ be a ring. A \textbf{polygon} in $R$ consists of the following data:

\begin{enumerate}
\item[(i)] An integer $n > 2$.

\item[(ii)] Irreducible closed subset $C_0, \cdots, C_{n-1}$, and irreducible closed sets $D_0, \cdots, D_{n-1}$ of $Spec(R)$ such that $D_j \subset C_i$ if and only if $i = j$ or $i = j+1$ (mod $n$).
\end{enumerate}
We call $n$ the number of \textbf{sides} of the polygon.
\end{definition}

\begin{notation} 
\label{notation}
Throughout this section, we will denote a polygon in our ring $R$ as $P = (n, C_i, D_j)$, where $n, C_i, D_j$ are as defined in \ref{def-polygons}, i.e., in particular, we will always assume that $D_i \subset C_i, C_{i+1}$. We will sometimes denote a polygon by the letter $P$, omitting $n, C_i, D_j$ when they are clear from context.
\end{notation}

\begin{remark} 
\label{remark-polygons}
One can verify that the geometrical picture of a polygon in a ring is, in most cases, what one would expect a polygon to look like. However, the picture could be more complicated. For instance, in the definition above, it could happen that $C_i \cap C_{i+1}$, which are like "vertices of a polygon", intersect with each other, making the picture more complicated. However, if  $Q_i$ is the generic point of $D_i$, then passing onto the semi-local ring at $(Q_1, \cdots, Q_{n-1})$, one "gets a picture" that is closer to being like the picture of a polygon from Euclidean geometry. Furthermore, if $R$ is quadratically closed, then the situation is even better, as the "vertices" $C_{i} \cap C_{i+1}$ are themselves irreducible closed subsets, and hence connected. This is the content of the next two lemmas.
\end{remark}

\begin{lemma} 
\label{polygons-under-localization}
Let $R$ be a ring, and $P = (n, C_i, D_j)$ be a polygon in $R$. Let $P_i$ be the generic point of $C_i$, and $Q_j$ be the generic point of $D_j$. Then
\begin{enumerate}
\item [(a)] The primes $P_i$ are all distinct, and for all $i \neq k$, $P_i \nsubseteq P_k$. The primes  $Q_j$ are all distinct, and for all $j \neq k$, $Q_j \nsubseteq Q_k$. The primes $P_i$ are distinct from the primes $Q_j$.

\item [(b)] Let $\overline{R}$ be the semi-local ring at $(Q_0, \cdots, Q_{n-1})$. Then the maximal ideals of $\overline{R}$ are $Q_j\overline{R}$. Given the map on Spec, $\varphi: Spec(\overline{R}) \rightarrow Spec(R)$, if $\overline{C_i} = \varphi^{-1}(C_i)$, and $\overline{D_j} = \varphi^{-1}(D_j)$, then we get a polygon $\overline{P} = (n, \overline{C_i}, \overline{D_j})$ in $\overline{R}$.

\item [(c)] In the polygon $\overline{P} = (n, \overline{C_i}, \overline{D_j})$, if $k \neq i, i+1$, then $\overline{C_i} \cap \overline{C_k} = \emptyset$. In particular, for all $j \neq l$, $\overline{D_j} \cap \overline{D_l} = \emptyset$.
\end{enumerate}
\end{lemma}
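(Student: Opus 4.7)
The plan is to exploit the biconditional ``$D_j \subset C_i$ iff $i \in \{j, j+1\} \pmod n$'' throughout, so that each $C_i$ is encoded by the two-element index set $\{i-1, i\}$ of $D$-indices it contains, and dually each $D_j$ by $\{j, j+1\}$. For part (a), I would argue that $P_i = P_k$ (equivalently $C_i = C_k$) forces $\{i-1, i\} = \{k-1, k\}$ as subsets of $\mathbb{Z}/n$, which for $n > 2$ forces $i = k$; a strict inclusion $P_i \subsetneq P_k$, i.e., $C_k \subsetneq C_i$, similarly gives $\{k-1, k\} \subset \{i-1, i\}$, hence equality by cardinality, hence $i = k$, a contradiction. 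The non-inclusion among the $Q_j$'s is handled dually by swapping the roles of the $C$'s and $D$'s: an inclusion $D_l \subset D_j$ forces $\{l, l+1\} \subset \{j, j+1\}$, hence $l = j$. To rule out $P_i = Q_j$, note that $C_i = D_j$ would give $C_i \subset C_j$ and $C_i \subset C_{j+1}$; since we have just ruled out nontrivial inclusions among the $C$'s, this yields $C_j = C_{j+1}$, hence $j = j+1 \pmod n$, impossible since $n > 2$.

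For part (b), Lemma \ref{semi-localization} and Remark \ref{rem-maximal-ideals-semi-localization}, combined with the absence of inclusions among the $Q_j$ from (a), immediately give that the maximal ideals of $\overline{R} = S^{-1} R$, where $S = \bigcap_j (R - Q_j)$, are exactly the $Q_j \overline{R}$. Since $D_{i-1} \subset C_i$ translates to $P_i \subset Q_{i-1}$, each $P_i$ is disjoint from $S$ and extends to a prime $P_i \overline{R}$ of $\overline{R}$, so that $\overline{C_i} = \varphi^{-1}(C_i) = V(P_i \overline{R})$ is irreducible closed with generic point $P_i \overline{R}$, and similarly $\overline{D_j} = V(Q_j \overline{R})$. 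The polygon biconditional then transports directly under the order-preserving correspondence between primes of $R$ disjoint from $S$ and primes of $\overline{R}$: $\overline{D_j} \subset \overline{C_i} \Leftrightarrow P_i \overline{R} \subset Q_j \overline{R} \Leftrightarrow P_i \subset Q_j \Leftrightarrow D_j \subset C_i \Leftrightarrow i \in \{j, j+1\}$.

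For part (c), I would compute $\overline{C_i} \cap \overline{C_k} = V(P_i \overline{R} + P_k \overline{R})$, and observe that this is empty iff the sum lies in no maximal ideal of $\overline{R}$, i.e., iff no $Q_j$ contains both $P_i$ and $P_k$. The polygon biconditional pins down which $Q_j$ contain $P_i$ (namely $j \in \{i-1, i\}$) and which contain $P_k$ ($j \in \{k-1, k\}$), and these two-element index sets are disjoint precisely when $k \notin \{i-1, i, i+1\}$. The in-particular claim for distinct $\overline{D_j}, \overline{D_l}$ is immediate, since they are distinct maximal ideals and therefore coprime.

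The main obstacle I anticipate is minor but worth flagging: the condition in (c) written ``$k \neq i, i+1$'' appears to be shorthand for non-adjacency, i.e., $k \notin \{i-1, i, i+1\}$, since $\overline{C_i} \cap \overline{C_{i-1}}$ contains the generic point of $\overline{D_{i-1}}$ and cannot be empty; I would proceed with this reading. Otherwise the proof is a routine translation of the polygon combinatorics through the semi-localization, with the hypothesis $n > 2$ playing the essential role of making a two-element subset of $\mathbb{Z}/n$ determine its indices.
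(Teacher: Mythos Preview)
Your proof is correct and follows essentially the same approach as the paper's, translating the polygon biconditional into combinatorial constraints on two-element index sets and then transporting everything through the semi-localization via the standard prime correspondence. Your reading of part (c) is right---the paper's own argument only yields a contradiction when no $Q_l$ contains both $P_i$ and $P_k$, i.e., when $\{i,k\}$ is not of the form $\{l,l+1\}$, so $k=i-1$ is implicitly excluded; one harmless slip in your write-up is that $D_l \subset D_j$ actually gives $\{j,j+1\} \subset \{l,l+1\}$ rather than the reverse, but by cardinality the conclusion is the same.
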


\begin{proof} (a) We actually have to prove three separate assertions as part of (a). For the first assertion, it suffices to show that for all $i \neq k$, $P_i \nsubseteq P_k$. Assume for contradiction that there exists $i \neq k$, for $i, k \in \{0, \cdots, n-1\}$, such that $P_i \subset P_k$. Then $C_k \subset C_i$. Now, $D_{k-1}, D_k \subset C_k$. Thus, $D_{k-1}, D_k \subset C_i$. Since $n > 2$, this means that $i = k$ by \ref{def-polygons}
(ii). This is a contradiction. 

\noindent
For the second assertion, it again suffices to show that for all $j \neq k$, $Q_i \nsubseteq Q_k$. Assume that there exists $j \neq k \in \{0, \cdots, n-1\}$ such that $Q_j \subset Q_k$. Then $D_k \subset D_j$. Thus, $D_{k} \subset C_k, C_{k+1}, C_{j}, C_{j+1}$. Since $j \neq k$, and $n > 2$, the set $\{k, k+1$(mod $n$),$ j, j+1$(mod $n)\}$ has at least three elements, which contradicts \ref{def-polygons}(ii).

\medskip\noindent
For the final assertion, assume that $P_i = Q_j$ for some $i, j \in \{0, \cdots, n-1\}$, where $i,j$ are not necessarily distinct. Then $D_j = C_i$. In particular, this implies that $D_j \subset C_i$. Thus, $i = j$ or $i = j+1$ (mod $n$). If $i = j$, then $D_{j-1} \subset C_{j} = D_j$. But this means that $Q_j \subset Q_{j-1}$, and we just proved that this cannot happen. If $i = j+1$ (mod $n$), then $D_{j+1} \subset C_{j+1} = D_j$, which is again impossible.

\medskip\noindent
(b) By (a), the $Q_i$ are all distinct, and have no strict inclusions. Then by \ref{rem-maximal-ideals-semi-localization} it follows that $\{Q_i\overline{R}\}$ is the set of maximal ideals of $\overline{R}$. Since $P_i \subset Q_i$, it follows that $P_i\overline{R}$ is a prime ideal of $\overline{R}$ for all $i$. It is easily seen that $\overline{C_i} = V(P_i\overline{R})$ and $\overline{D_i} = V(Q_i\overline{R})$, and so it follows that $\overline{C_i}$, $\overline{D_i}$ are irreducible closed subsets of $\overline{R}$. The ring map $R \rightarrow \overline{R}$ that induces the map on $\varphi$ has the following property: If $S = \bigcap_{i} (R - Q_i)$, and $\alpha$ is a prime of $R$ such that $\alpha \cap S = \emptyset$, then $(R \rightarrow \overline{R})^{-1}(\alpha\overline{R}) = \alpha$. Using this one can show that $\overline{D_i} \subset \overline{C_j}$ if and only if $j = i$ of $j = i+1$ (mod $n$). This establishes that $\overline{P} = (n, \overline{C_i}, \overline{D_j})$ is a polygon in $\overline{R}$.

\medskip \noindent
(c) Suppose $\overline{C_i} \cap \overline{C_k} \neq \emptyset$ for some $k \neq i, i + 1$. (Note that such  $i, k$ can exist because by (b), $\overline{P}$ is a polygon with $n$ sides, where $n > 2$). This means that $P_i\overline{R} + P_k\overline{R} \neq \overline{R}$. So there is a maximal ideal of $\overline{R}$ that contains $P_i\overline{R} + P_k\overline{R}$. By (b) we know that the closed one point sets $\overline{D_j}$ are in bijection with the maximal ideals of $\overline{R}$. Thus, there exists $l$ such that $\overline{D_l} \subset \overline{C_i}  \cap \overline{C_k}$. But, this is impossible by \ref{def-polygons}
(ii), because $\overline{P}$ is a polygon. Since the $\overline{D_i}$ are in bijection with the maximal ideals of $\overline{R}$, it follows that for $j \neq l$, $\overline{D_j} \cap \overline{D_l}  = \emptyset$.
\end{proof}

\begin{lemma} 
\label{polygons-in-quadratically-closed-rings}
Let $R$ be a quadratically closed ring, and $P = (n, C_i, D_j)$ be a polygon in $R$. Then for all $i \in \{0, \cdots, n-1\}$, $C_i \cap C_{i+1}$ is an irreducible closed set (indices mod $n$).
\end{lemma}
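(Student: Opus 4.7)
The plan is to recognize that this statement is an essentially immediate consequence of \ref{irreducible-intersection-property-quadratically-closed-rings}, once we translate the geometric statement into ring-theoretic language.

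First, for each $i \in \{0, \dots, n-1\}$, let $P_i$ denote the generic point of $C_i$, viewed as a prime ideal of $R$. Since $C_i = V(P_i)$, we have the standard identity
\[
C_i \cap C_{i+1} \;=\; V(P_i) \cap V(P_{i+1}) \;=\; V(P_i + P_{i+1}).
\]
Next, I would observe that this intersection is non-empty: by the definition of a polygon (\ref{def-polygons}(ii)), $D_i \subset C_i$ and $D_i \subset C_{i+1}$, so $D_i \subset C_i \cap C_{i+1}$, and in particular $C_i \cap C_{i+1} \neq \emptyset$. Equivalently, $P_i + P_{i+1} \neq R$.

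Now apply \ref{irreducible-intersection-property-quadratically-closed-rings}: since $R$ is quadratically closed and $P_i, P_{i+1}$ are prime, either $P_i + P_{i+1} = R$ or $P_i + P_{i+1}$ is prime. The first alternative is ruled out by the previous step, so $P_i + P_{i+1}$ is a prime ideal. Therefore $C_i \cap C_{i+1} = V(P_i + P_{i+1})$ is the closure of a single point (the point corresponding to $P_i + P_{i+1}$) and hence is an irreducible closed subset of $\operatorname{Spec}(R)$.

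There is no real obstacle here; the content of the lemma is essentially the observation that the two conclusions offered by \ref{irreducible-intersection-property-quadratically-closed-rings} are forced into the ``prime'' case because the polygon structure guarantees a common point $D_i$ in $C_i \cap C_{i+1}$. The only thing worth double-checking is the index convention (namely that indices are taken mod $n$ and that $D_i \subset C_i \cap C_{i+1}$ follows from the ``$i = j$ or $i = j+1$'' clause applied with $j = i$), which matches \ref{notation}.
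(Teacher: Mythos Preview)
Your proof is correct and follows essentially the same approach as the paper: observe that $D_i \subset C_i \cap C_{i+1}$ forces $P_i + P_{i+1} \neq R$, and then invoke \ref{irreducible-intersection-property-quadratically-closed-rings} to conclude that $P_i + P_{i+1}$ is prime. Your write-up is slightly more explicit in spelling out $C_i \cap C_{i+1} = V(P_i + P_{i+1})$, but the argument is the same.
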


\begin{proof}
Note that $D_i \subset C_i \cap C_{i+1}$, for all $i$. If $Q_i$ is the generic point of $D_i$, and $P_i$ the generic point of $C_i$, this means that $P_i + P_{i+1} \subset Q_i$. Hence, $P_i + P_{i+1} \neq R$. Then we are done by \ref{irreducible-intersection-property-quadratically-closed-rings}.
\end{proof}

\begin{lemma} 
\label{exact-sequence}
Let $R$ be a ring, and $I_0, I_1$ be ideals of $R$. Consider the map $\phi: R/I_0 \times R/I_1 \rightarrow R/I_0 + I_1$ defined as follows: $\phi(\overline{a_0}, \overline{a_1}) = \overline{\overline{a_0 - a_1}}$ $($here $\overline{a_k}$ is the image of the element $a_k \in R$ in $R/I_k$, and $\overline{\overline{a_0 - a_1}}$ is the image of $a_0 - a_1$ in $R/I_0 + I_1$$)$. If $\phi(\overline{a_0}, \overline{a_1}) = 0$, then there exists $a \in R$ such that $(\overline{a}, \overline{a}) = (\overline{a_0}, \overline{a_1})$.
\end{lemma}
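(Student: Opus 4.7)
The plan is to unpack what the hypothesis $\phi(\overline{a_0}, \overline{a_1}) = 0$ actually says in $R$, and then explicitly build the desired element $a$ by a small adjustment of either $a_0$ or $a_1$.

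First I would observe that $\phi(\overline{a_0}, \overline{a_1}) = 0$ in $R/(I_0 + I_1)$ means precisely that $a_0 - a_1 \in I_0 + I_1$. By definition of the sum of ideals, this gives a decomposition
\[
a_0 - a_1 = x_0 + x_1, \qquad x_0 \in I_0,\; x_1 \in I_1.
\]
This is the only real content we can extract from the hypothesis.

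Next I would simply set
\[
a \;=\; a_0 - x_0 \;=\; a_1 + x_1,
\]
where the second equality uses the identity $a_0 - a_1 = x_0 + x_1$ rearranged. Then $a - a_0 = -x_0 \in I_0$, so $\overline{a} = \overline{a_0}$ in $R/I_0$; and $a - a_1 = x_1 \in I_1$, so $\overline{a} = \overline{a_1}$ in $R/I_1$. Hence $(\overline{a}, \overline{a}) = (\overline{a_0}, \overline{a_1})$ in $R/I_0 \times R/I_1$, which is what we wanted.

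There is no real obstacle here; the argument is essentially the kernel computation of the standard Mayer--Vietoris-type exact sequence $0 \to R/(I_0 \cap I_1) \to R/I_0 \times R/I_1 \to R/(I_0 + I_1)$, made concrete. The only thing to be careful about is sign bookkeeping in the definition of $\phi$ (the map uses $a_0 - a_1$ rather than $a_0 + a_1$), which is handled by the choice of $a = a_0 - x_0$ above.
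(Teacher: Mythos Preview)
Your proof is correct and follows essentially the same approach as the paper: both deduce $a_0 - a_1 \in I_0 + I_1$, decompose it as a sum of an element of $I_0$ and an element of $I_1$, and define $a$ by adjusting $a_0$ (equivalently $a_1$) by the appropriate summand. The only difference is a trivial sign convention in how the decomposition is written.
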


\begin{proof}
Suppose $\phi(\overline{a_0}, \overline{a_1}) = 0$. Then $a_0 - a_1 \in I_0 + I_1$. Thus, there exist $i_0 \in I_0$ and $i_1 \in I_1$ such that $a_0 - a_1 = i_1 - i_0$. Let $a = a_0 + i_0 = a_1 + i_1$. It is then clear that $(\overline{a}, \overline{a}) = (\overline{a_0}, \overline{a_1})$.
\end{proof}

\begin{lemma} 
\label{generalized-exact-sequence}
Let $R$ be a ring. Let $n \geq 2$. Let $I_0, \cdots, I_{n-1}$ be ideals of $R$ such that for all distinct $i, j, k \in \{0, \cdots, n-1\}$, $I_i + I_j + I_k = R$. Let $M = \big{\{}(i, j): i, j \in \{0,\cdots, n-1\}$ and $i < j\big{\}}$. For $(i,j) \in M$, let $\alpha_{i,j} = I_i + I_j$. Consider the ring homomorphism $\phi: \prod_{0 \leq k \leq n-1} R/I_k \rightarrow \prod_{(i,j) \in M} R/\alpha_{i,j}$ defined as follows: $\phi(\overline{a_0}, \cdots, \overline{a_{n-1}}) = (\overline{\overline{a_i - a_j}})_{(i,j) \in M}$, for $a_0, \cdots, a_{n-1} \in R$. If $\phi(\overline{a_0}, \cdots, \overline{a_{n-1}}) = 0$, then there exists $a \in R$ such that $(\overline{a},\cdots, \overline{a}) = (\overline{a_0}, \cdots, \overline{a_{n-1}})$.
\end{lemma}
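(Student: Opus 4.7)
The plan is to prove the statement by induction on $n \geq 2$. The base case $n = 2$ is precisely Lemma \ref{exact-sequence}, the triple hypothesis being vacuous there. For the inductive step $n \geq 3$, I first apply the inductive hypothesis to the sub-tuple $(a_0, \ldots, a_{n-2})$ and the ideals $I_0, \ldots, I_{n-2}$: the pairwise compatibility $a_i - a_j \in I_i + I_j$ is inherited, as is the triple coprimality (which is vacuous if $n = 3$, in which case we are just invoking the base case). This produces $a' \in R$ with $\overline{a'} = \overline{a_k}$ in $R/I_k$ for every $k \leq n-2$.

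The second step is a single application of Lemma \ref{exact-sequence} to the two ideals $J := I_0 \cap \cdots \cap I_{n-2}$ and $I_{n-1}$, with elements $a'$ and $a_{n-1}$. This will produce an $a \in R$ simultaneously congruent to $a'$ modulo $J$ and to $a_{n-1}$ modulo $I_{n-1}$; such an $a$ satisfies $\overline{a} = \overline{a_k}$ for all $k = 0, \ldots, n-1$, completing the induction. To invoke Lemma \ref{exact-sequence}, one must verify that $a' - a_{n-1} \in J + I_{n-1}$.

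This verification is the heart of the argument. For each $k \leq n-2$,
\[
a' - a_{n-1} = (a' - a_k) + (a_k - a_{n-1}) \in I_k + (I_k + I_{n-1}) = I_k + I_{n-1},
\]
using $\phi(\overline{a_0}, \ldots, \overline{a_{n-1}}) = 0$. So it suffices to show $\bigcap_{k \leq n-2}(I_k + I_{n-1}) \subseteq J + I_{n-1}$. Passing to $\bar R := R/I_{n-1}$ and writing $\bar I_k$ for the image of $I_k + I_{n-1}$, the triple hypothesis $I_i + I_j + I_{n-1} = R$ for distinct $i, j \leq n-2$ says precisely that the ideals $\bar I_0, \ldots, \bar I_{n-2}$ are pairwise comaximal in $\bar R$. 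Classical CRT then gives $\bigcap \bar I_k = \prod \bar I_k$, and since $\prod I_k \subseteq \bigcap I_k = J$, this intersection lies in $(J + I_{n-1})/I_{n-1}$; translating back to $R$ yields the required inclusion.

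The main obstacle is exactly this last step: the given hypothesis is strictly weaker than pairwise comaximality of the $I_k$ (as the triple condition allows, for instance, $I_i = I_j$ for some $i \neq j$), so CRT cannot be applied in $R$ directly. The conceptual content of the triple condition is precisely that it forces the $I_k$ to become pairwise comaximal after modding out by any one $I_l$, which is exactly what is needed to extract a factorization of the intersection in $\bar R$.
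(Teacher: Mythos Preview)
Your proof is correct and follows essentially the same inductive strategy as the paper's: both obtain an element matching the first $n-1$ coordinates by induction, then use the triple hypothesis to show that the remaining discrepancy lies in $(\bigcap_{k<n-1} I_k) + I_{n-1}$ via a CRT/product-equals-intersection argument on the ideals $I_k + I_{n-1}$. The only cosmetic differences are that you pass to the quotient $R/I_{n-1}$ to run CRT and then invoke Lemma~\ref{exact-sequence} a second time to glue, whereas the paper stays in $R$, computes $\prod_{i\neq m}(I_i+I_m) = (\prod_{i\neq m} I_i) + I_m$ directly, and writes down the correcting element $b$ by hand.
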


\begin{proof}
We will prove this by induction on $n$. The base case $n = 2$ is just \ref{exact-sequence}. Assume the statement holds for $n = m$ for $m \geq 2$. Then, for $n = m +1$, let $\phi(\overline{a_0}, \cdots, \overline{\alpha_{m-1}}, \overline{\alpha_{m}}) = 0$. By the induction hypothesis, there exists $a \in R$ such that $(\overline{a}, \cdots, \overline{a}, \overline{a}) = (\overline{a_0}, \cdots, \overline{a_{m-1}}, \overline{a})$. Then, $(\overline{a_0}, \cdots, \overline{a_{m-1}}, \overline{a_{m}}) - (\overline{a}, \cdots, \overline{a}, \overline{a}) = (0, \cdots, 0, \overline{a_{m} - a})$, and clearly, $\phi(0, \cdots, 0, \overline{a_{m} - a}) = 0$. In particular, this means that for all $i \neq m$, $a_{m} - a \in \alpha_{i,m}$. Thus, $a_{m} - a \in \bigcap_{i \neq m} \alpha_{i,m}$. Now, for all $i, j < m$ such that $i \neq j$, $\alpha_{i,m} + \alpha_{j,m} =  I_i + I_j + I_m = R$ by hypothesis. Thus, $\bigcap_{i \neq m} \alpha_{i,m} = \prod_{i \neq m} \alpha_{i,m} = (\prod_{i \neq m} I_i) + I_m = (\prod_{0 \leq i \leq m-1}I_i) + I_m$. Then there exists $b \in \prod_{0 \leq i \leq m-1}I_i$, $i_m \in I_m$ such that $a_m - a = b + i_m$. It immediately follows that $(\overline{b}, \cdots, \overline{b}, \overline{b}) = (0, \cdots, 0, \overline{a_m - a})$. As a result, $(\overline{a_0}, \cdots, \overline{a_{m-1}}, \overline{a_m}) = (\overline{a + b}, \cdots, \overline{a+b}, \overline{a+b})$. We are then done by induction.
\end{proof}

\begin{corollary} 
\label{exact-sequence-for-proof-of-quadratically-closed}
Under the hypotheses of \ref{generalized-exact-sequence}, let $\varphi: R/I_0 \cap \cdots I_{n-1} \rightarrow \prod_{0 \leq i \leq n-1} R/I_i$ be the canonical inclusion. If $\phi(\overline{a_0}, \cdots, \overline{a_{n-1}}) = 0$, then there exists $\gamma \in R/I_0 \cap \cdots \cap I_{n-1}$ such that $\varphi(\gamma) = (\overline{a_0}, \cdots, \overline{a_{n-1}})$.
\end{corollary}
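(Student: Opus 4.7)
The plan is to reduce the corollary directly to Lemma \ref{generalized-exact-sequence}. Recall that the canonical inclusion $\varphi: R/(I_0 \cap \cdots \cap I_{n-1}) \to \prod_{0 \leq i \leq n-1} R/I_i$ is defined by $\varphi(\overline{r}) = (\overline{r}, \ldots, \overline{r})$, where on the left $\overline{r}$ denotes the class of an element $r \in R$ modulo $I_0 \cap \cdots \cap I_{n-1}$, and on the right each component $\overline{r}$ is understood as the image of $r$ in $R/I_i$. The image of $\varphi$ is therefore precisely the set of ``diagonal-constant'' tuples in $\prod_i R/I_i$.

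Now suppose $\phi(\overline{a_0}, \ldots, \overline{a_{n-1}}) = 0$. By Lemma \ref{generalized-exact-sequence}, there exists $a \in R$ such that $(\overline{a}, \ldots, \overline{a}) = (\overline{a_0}, \ldots, \overline{a_{n-1}})$ in $\prod_i R/I_i$. Letting $\gamma$ be the class of $a$ in $R/(I_0 \cap \cdots \cap I_{n-1})$, the definition of $\varphi$ immediately yields $\varphi(\gamma) = (\overline{a}, \ldots, \overline{a}) = (\overline{a_0}, \ldots, \overline{a_{n-1}})$, which is what we wanted. There is no real obstacle here, as the corollary is essentially a repackaging of Lemma \ref{generalized-exact-sequence} in terms of the diagonal map from the quotient $R/(I_0 \cap \cdots \cap I_{n-1})$; the only small verification, namely that the choice of lift $a$ does not matter (any two such lifts differ by an element of $I_0 \cap \cdots \cap I_{n-1}$ and therefore represent the same class $\gamma$), is built into the definition of $\varphi$.
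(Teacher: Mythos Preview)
Your proof is correct and follows exactly the paper's approach: apply Lemma~\ref{generalized-exact-sequence} to obtain $a \in R$ with $(\overline{a},\ldots,\overline{a}) = (\overline{a_0},\ldots,\overline{a_{n-1}})$, and let $\gamma$ be the image of $a$ in $R/(I_0\cap\cdots\cap I_{n-1})$.
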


\begin{proof}
By \ref{generalized-exact-sequence}, there exists $a \in R$ such that $(\overline{a}, \cdots, \overline{a}) = (\overline{a_0}, \cdots, \overline{a_{n-1}})$. Just take $\gamma$ to be the image of $a$ in $R/I_0 \cap \cdots \cap I_{n-1}$.
\end{proof} 

\begin{lemma} 
\label{no-polygons-in-quadratically-closed}
Let $R$ be a quadratically closed ring. There are no polygons in $R$.
\end{lemma}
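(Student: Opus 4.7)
The plan is to argue by contradiction: assume $R$ is quadratically closed and admits a polygon $P=(n, C_i, D_j)$ with $n>2$, and derive an inconsistency using the exact-sequence machinery of \ref{exact-sequence-for-proof-of-quadratically-closed} together with quadratic closure.

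First, I would reduce to a semi-local setting. Let $Q_j$ denote the generic point of $D_j$ and $P_i$ the generic point of $C_i$. Passing to the semi-localization $\overline{R}$ at $(Q_0,\dots,Q_{n-1})$, Lemma \ref{polygons-under-localization} yields a polygon $\overline{P}$ in $\overline{R}$ for which the $Q_j\overline{R}$ are exactly the (pairwise coprime) maximal ideals and $\overline{C_i}\cap\overline{C_k}=\emptyset$ for non-adjacent indices. By \ref{localization-quadratically-closed}, $\overline{R}$ is still quadratically closed; moreover, by \ref{polygons-in-quadratically-closed-rings} each $P_i+P_{i+1}$ (indices mod $n$) is a proper prime ideal, contained in $Q_i$, while $P_i+P_k=\overline{R}$ whenever $\{i,k\}$ is not a cyclic edge. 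So it suffices to derive a contradiction in $\overline{R}$.

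Second, I would set up the exact sequence of \ref{exact-sequence-for-proof-of-quadratically-closed} applied to the ideals $P_0,\dots,P_{n-1}$. The needed triple-sum hypothesis holds: for any three distinct indices $i,j,k$ we have $P_i+P_j+P_k=\overline{R}$, because otherwise all three primes would lie in some maximal $Q_m$, yet each $Q_m$ contains only two of the $P_\ell$ (namely $P_m$ and $P_{m+1}$). Consequently, an $n$-tuple $(\overline{a_0},\dots,\overline{a_{n-1}})\in\prod_i\overline{R}/P_i$ lifts to some $b\in\overline{R}/\bigcap_i P_i$ if and only if $a_i\equiv a_{i+1}\pmod{P_i+P_{i+1}}$ for all $i$ modulo $n$—all other pairwise conditions being vacuous, since $\overline{R}/(P_i+P_j)=0$ for non-adjacent $i,j$.

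Third, I would extract a contradiction from these cyclic compatibilities. The plan is to construct an explicit tuple $(\overline{a_0},\dots,\overline{a_{n-1}})$ that is cyclically compatible but whose putative global lift $b\in\overline{R}$ is forced to satisfy two incompatible quadratic equations. Concretely, I would fix an element $c\in\overline{R}$ and use the quadratic closure of each domain $\overline{R}/(P_i+P_{i+1})$ to pick a square root $s_i$ of the image of $c$ at the $i$-th vertex; then I would lift each $s_i$ to a representative $a_i\in\overline{R}/P_i$ and attempt to glue these into a consistent system, using the cyclic nature of the polygon (and $n>2$) to force a $\mathbb{Z}/2$-type sign discrepancy around the loop that rules out a global lift.

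The main obstacle will be step three: the exact sequence by itself places no obstruction on cyclically compatible tuples, so the contradiction must come from choosing $c$ and the square roots $s_i$ so that the cyclic sign obstruction is provably nontrivial. I would first work this out in the base case $n=3$—where all pairs of walls are adjacent and the geometry is simplest—and then either extend by induction on $n$ or by a uniform ``sign-around-the-polygon'' construction; the author's cross-references to \ref{comment} and \ref{special-case-polygons-quadratically-closed} suggest that the argument may in fact be split into separate cases, with \ref{no-polygons-in-quadratically-closed} proving the key instance and the companion results handling extensions.
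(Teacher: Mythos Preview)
Your first two steps match the paper almost exactly: the reduction to the semi-local ring $\overline{R}$ at $(Q_0,\dots,Q_{n-1})$ is precisely Comment \ref{comment}, and the verification that the triple-sum hypothesis of \ref{generalized-exact-sequence} holds is carried out in \ref{special-case-polygons-quadratically-closed} just as you describe. So the setup is correct.

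The gap is entirely in step three, and you have in effect acknowledged this by calling it ``the main obstacle'' and deferring the construction. Your proposed mechanism---choose $c$, take square roots $s_i$ of its images at the vertices $\overline{R}/(P_i+P_{i+1})$, and look for a $\mathbb{Z}/2$ sign monodromy around the loop---is not the paper's argument and, as stated, does not work. The immediate problem is characteristic $2$: in a domain of characteristic $2$ an element has at most one square root, so there is no sign to flip and no monodromy to exploit. Even away from characteristic $2$, you have not said what $c$ should be or why the signs cannot be made globally consistent; the exact sequence tells you that \emph{any} cyclically compatible tuple lifts, so the obstruction cannot come from failure to glue the $s_i$---it has to come from the lifted element violating some further constraint.

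The paper's device avoids these issues by replacing the Kummer equation $x^2=c$ with the Artin--Schreier--type equation $x^2-x=\beta$. The roots of $x^2-x$ in any nonzero domain are $0$ and $1$, which are \emph{always} distinct, in every characteristic. Concretely: using CRT on the two adjacent vertices $\alpha_{0,1}$ and $\alpha_{1,2}$ (which are coprime since $P_0+P_1+P_2=R$), choose $c\in R$ with $c\equiv 1\pmod{\alpha_{0,1}}$ and $c\equiv 0\pmod{\alpha_{1,2}}$. Then the tuple $(0,\overline{c^2-c},0,\dots,0)\in\prod_i R/P_i$ is cyclically compatible, hence lifts to some $\beta\in R/\bigcap_i P_i$ by \ref{exact-sequence-for-proof-of-quadratically-closed}. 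Quadratic closure gives a root $\gamma$ of $x^2-x-\beta$, and its image in each $R/P_i$ (for $i\neq 1$) must be $0$ or $1$. Chasing the cyclic compatibility conditions from index $0$ around to index $2$ forces a contradiction with the chosen values of $c$ at the two vertices. This is the missing idea you need to replace your square-root sketch.
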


\begin{comment} 
\label{comment}
Let $R$ be a quadratically closed ring with a polygon $P = (n, C_i, D_j)$. Let $P_i$ be the generic point of $C_i$ and $Q_j$ the generic point of $D_j$. Since the property of being quadratically closed is preserved under localization, by passing to the semi-local ring at $(Q_0, \cdots, Q_{n-1})$, it suffices to show \ref{polygons-under-localization}(c) that a polygon $P = (n, C_i, D_j)$, with the property that $C_i \cap C_k = \emptyset$ for all $k \neq i, i+1$, does not exist in $R$. Since $R$ is a quadratically closed ring, by \ref{polygons-in-quadratically-closed-rings} we further reduce to the following situation:
\end{comment}

\begin{lemma} 
\label{special-case-polygons-quadratically-closed}
Let $R$ be a quadratically closed ring. Then a polygon of the type $P = (n, C_i, C_j \cap C_{j+1})$, where for all $k \neq i, i+1$, $C_i \cap C_k = \emptyset$, does not exist in $R$.
\end{lemma}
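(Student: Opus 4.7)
Suppose for contradiction that such a polygon exists in $R$. Let $P_i$ denote the generic point of $C_i$ and $Q_j$ that of $D_j = C_j \cap C_{j+1}$. Since quadratic closure is preserved under localization (Lemma \ref{localization-quadratically-closed}), after localizing at the complement of $\bigcup_j Q_j$ we may assume $R$ is semi-local with maximal ideals $Q_0, \ldots, Q_{n-1}$; the primes $P_i$ then satisfy $Q_j = P_j + P_{j+1}$ (indices mod $n$), while $P_i + P_k = R$ for cyclically non-adjacent indices by the hypothesis on the polygon.

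The plan is to produce an element $r \in R$ as a root of a carefully chosen monic quadratic polynomial (using quadratic closure), whose residues modulo each $P_i$ are tightly controlled, and then to propagate these residues around the cyclic chain of vertices $Q_0, Q_1, \ldots, Q_{n-1}$ via the compatibility conditions $r \bmod P_j \equiv r \bmod P_{j+1} \pmod{Q_j}$. Since $P_0 + P_1 + P_2 = R$ in all cases (using $P_0 + P_2 = R$ when $n \geq 4$; and $P_2 \nsubseteq Q_0$ together with the maximality of $Q_0$ when $n = 3$), I would write $1 = \alpha + \beta + \gamma$ with $\alpha \in P_0$, $\beta \in P_1$, $\gamma \in P_2$, refining the choices via prime avoidance so that $\beta$ and $\gamma$ avoid each of the remaining $P_k$ and each $Q_k$ they are not forced into. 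Quadratic closure applied to $T^2 + T - \beta\gamma \in R[T]$ yields $r \in R$ with $r^2 + r = \beta\gamma$.

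Modulo $P_1$ or $P_2$ the polynomial reduces to $T(T+1)$, so $r \equiv 0$ or $r \equiv -1$ there; modulo $P_0$, using $\gamma \equiv 1 - \beta \pmod{P_0}$, it becomes $T^2 + T - \beta(1 - \beta)$, and a second application of quadratic closure in $R/P_0$ (which is again quadratically closed) pins down its two roots; analogous uses of quadratic closure in each $R/P_j$ for $j \geq 3$ constrain $r \bmod P_j$ to one of two explicit residues. The vertex compatibility conditions, combined with the fact that $\beta \equiv 0 \pmod{Q_j}$ precisely when $P_1 \subseteq Q_j$ (i.e.\ $j \in \{0, 1\}$) while $\beta \equiv 1$ modulo the other $Q_j$ that contain both $\alpha$ and $\gamma$, chain these constraints together around the cycle.

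The main obstacle is executing the propagation step. For $n = 3$ the cycle is short and a direct case analysis, using the three compatibility conditions at $Q_0, Q_1, Q_2$ and the explicit descriptions above of $r \bmod P_0$, $r \bmod P_1$, $r \bmod P_2$, shows that every consistent assignment of residues forces $1 \equiv 0 \pmod{Q_j}$ at some vertex, the sought contradiction. For $n \geq 4$, bookkeeping the parity of the ``sign twists'' picked up when going once around the polygon — a parity which reflects the non-triviality of the double cover corresponding to the polygon — requires iterating the quadratic-closure trick at each $R/P_j$ and carefully arranging the prime-avoidance choices in $\alpha, \beta, \gamma$; closing the cycle again produces the same absurdity. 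This propagation and the ensuing case analysis is the principal technical content of the proof.
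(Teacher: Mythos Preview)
Your overall plan --- take a root of a chosen monic quadratic and chain its residues modulo each $P_j$ around the cycle of vertices --- is exactly the paper's plan, and the semi-localisation step is fine. The gap is in the choice of constant term. Writing $1 = \alpha + \beta + \gamma$ with $\alpha \in P_0$, $\beta \in P_1$, $\gamma \in P_2$ controls $\beta\gamma$ only at $P_0, P_1, P_2$: for $j \geq 3$ the image of $\beta\gamma$ in $R/P_j$ is an element you know nothing about, so the two roots of your quadratic there are an unconstrained pair $\{\rho_j, -1-\rho_j\}$. Chasing the compatibilities once around the cycle, one checks already for $n = 4$ that every condition you can write down is consistently satisfiable; the ``parity of sign twists'' yields no obstruction, because how each $\rho_j$ reduces at its two neighbouring vertices is not something your construction pins down. (Also, with your sign convention the polynomial does not factor nicely modulo $P_0$: you would want $T^2 - T + \beta\gamma$ to get roots $\beta$ and $1-\beta$ there, and even then this helps only at $P_0$.)

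The paper's fix is to build a constant term that vanishes modulo \emph{every} $P_j$ except one. It first picks $c$ with $c \equiv 1 \pmod{Q_0}$ and $c \equiv 0 \pmod{Q_1}$ by the Chinese Remainder Theorem, and then uses the exact-sequence lemma (Corollary~\ref{exact-sequence-for-proof-of-quadratically-closed}, which is precisely where the hypothesis $P_i + P_j + P_k = R$ for all distinct triples is spent) to lift the tuple $(0, \overline{c^2 - c}, 0, \ldots, 0) \in \prod_j R/P_j$ back to a single element $\beta \in R/\bigcap P_j$. Now the roots of $x^2 - x - \beta$ lie in $\{0,1\}$ modulo every $P_j$ with $j \neq 1$ and in $\{c, 1-c\}$ modulo $P_1$; the propagation is forced at every step of the cycle, and the built-in asymmetry $c \equiv 1 \pmod{Q_0}$ versus $c \equiv 0 \pmod{Q_1}$ delivers the contradiction. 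That global lift, rather than a three-term partition of unity, is the missing ingredient in your argument.
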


\begin{proof}
Assume that such a polygon exists. We will translate this geometric statement into a purely algebraic statement, and then try to deduce a contradiction. Let $P_i$ be the generic point of $C_i$. Since $C_i \cap C_{i+1} = V(P_i + P_{i+1})$ is irreducible by hypothesis, it follows that $P_i + P_{i+1} \neq R$, hence prime \ref{irreducible-intersection-property-quadratically-closed-rings}. Also, $C_i \cap C_{k} = \emptyset$ for all $k \neq i, i+1$ (mod $n$) implies that $P_i + P_k = R$ for all $k \neq i, i+1$ (mod $n$). Hence, for all distinct $i, j, k \in \{0, \cdots, n-1\}$, $P_i + P_j + P_k = R$. Let $\alpha_{i,j} = P_i + P_j$ for all $i, j \in \{0, \cdots, n-1\}$ such that $i < j$. It follows that 
\begin{equation}
\prod_{i < j} \alpha_{i,j} = \prod_{0 \leq j \leq n-1} \alpha_{j, j+1}
\end{equation}
where the indices are mod $n$.
We also have the following sequence of ring homomorphisms
\begin{equation}
R/P_0 \cap \cdots P_{n-1} \xrightarrow{\varphi} \prod_{0 \leq i \leq n-1}R/P_i \xrightarrow{\phi} \prod_{0 \leq j \leq n-1} R/\alpha_{j,j+1} = \prod_{i < j} R/\alpha_{i,j}
\end{equation}
where $\varphi$ is the canonical injection, and $\phi$ is the map from \ref{generalized-exact-sequence}.

\medskip \noindent
Since $\alpha_{0,1} + \alpha_{1,2} = R$, by the Chinese Remainder Theorem, the canonical map $R \rightarrow R/\alpha_{0,1} \times R/\alpha_{1,2}$ is surjective, and $ker(R \rightarrow R/\alpha_{0,1} \times R/\alpha_{1,2}) = \alpha_{0,1} \cap \alpha_{1,2} \supset P_1$. Hence, we get a surjective ring map $R/P_1 \rightarrow R/\alpha_{0,1} \times R/\alpha_{1,2}$. In particular, there exists $\overline{c} \in R/P_1$, for $c \in R$, such that $(R/P_1 \rightarrow R/\alpha_{0,1} \times R/\alpha_{1,2})(\overline{c}) = (1, 0)$. Then consider the element $(0, \overline{c^2 - c}, 0, \cdots, 0) \in \prod_{0 \leq i \leq n-1}R/P_i$. The image of this element under $\phi$ is $0$. Thus, by \ref{exact-sequence-for-proof-of-quadratically-closed}, there exists $\beta \in R/P_0 \cap \cdots \cap P_{n-1}$ such that $\varphi(\beta) = (0, \overline{c^2 - c}, 0, \cdots, 0)$.

\medskip \noindent
Now consider the polynomial $x^2 - x - \beta \in R/P_0 \cap \cdots \cap P_{n-1}[x]$. Since $R/P_0 \cap \cdots \cap P_{n-1}$ is quadratically closed \ref{quadratically-closed-rings-and-quotients}, there exists $\gamma \in R/P_0 \cap \cdots \cap P_{n-1}$ such that $\gamma^2 - \gamma - \beta = 0$. Then $\varphi(\gamma)$ is a root of $x^2 - x - \varphi(\beta) = x^2 - x - (0,c^2 - c, 0, \cdots, 0)$. Since each $R/P_i$ is a domain, if $\varphi(\gamma) = (\overline{a_0}, \overline{a_1}, \cdots, \overline{a_{n-1}})$, for $a_j \in R$, then $\overline{a_j} = 0$ or $1$ for all $j \neq 1$, and $\overline{a_1} = \overline{c}$ or $\overline{1-c}$.

\medskip \noindent
Suppose $\overline{a_1} = \overline{c}$. Since $\phi(\varphi(\gamma)) = (\overline{\overline{a_0 - c}}, \overline{\overline{c - a_2}}, \cdots, \overline{\overline{a_{n-1} - a_0}}) = 0$, and $(R/P_1 \rightarrow R/\alpha_{0,1} \times R/\alpha_{1,2})(\overline{c}) = (1, 0)$, we get $\overline{a_0} = 1 \Rightarrow \overline{a_{n-1}} = 1 \Rightarrow \cdots \Rightarrow \overline{a_2} = 1$. Thus, $\varphi(\gamma) = (1, \overline{c}, 1, \cdots, 1)$. But then, $\overline{\overline{c- a_2}} = \overline{\overline{c - 1}} = -1 \neq 0$ in $R/\alpha_{1,2}$ (recall that $\alpha_{1,2}$ is prime, so $R/\alpha_{1,2}$ is not the zero ring), which is a contradiction. If on the other hand $\overline{a_1} = \overline{1 - c}$, then one can easily verify by a similar reasoning as above that $\varphi(\gamma) = (\overline{a_0}, \overline{a_1}, \cdots, \overline{a_{n-1}}) = (0, \overline{1 - c}, 0, \cdots, 0)$. Again, $\overline{\overline{a_1 - a_2}} = \overline{\overline{(1-c) - 0}} = 1 \neq 0$, which contradicts the fact that $\phi(\varphi(\gamma)) = 0$.
\end{proof}

\noindent
Since absolutely integrally closed rings are quadratically closed, we obtain the following as an immediate corollary:

\begin{lemma} 
\label{polygons-absolutely-integrally-closed-ring}
There are no polygons in an absolutely integrally closed ring.
\end{lemma}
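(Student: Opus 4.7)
The plan is to deduce this as an immediate corollary of \ref{no-polygons-in-quadratically-closed}, for which the only thing to verify is the well-known observation (already noted in the paper after \ref{def-quadratically-closed-ring}) that every absolutely integrally closed ring is quadratically closed.

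First I would recall that by \ref{def-absolutely-integrally-closed-ring} any absolutely integrally closed ring $R$ is a finite product $A_1 \times \cdots \times A_n$ of absolutely integrally closed domains. By \ref{finite-product-absolutely-integrally-closed}, every monic polynomial in $R[x]$ splits into linear factors over $R$; in particular every monic quadratic polynomial has a root. Therefore $R$ is quadratically closed in the sense of \ref{def-quadratically-closed-ring}.

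Given this, the lemma is immediate: by \ref{no-polygons-in-quadratically-closed}, quadratically closed rings admit no polygons, so in particular $R$ admits none. There is no real obstacle here; the only thing to check is the inclusion of absolutely integrally closed rings into quadratically closed rings, which is a one-line observation from the definitions. All the genuine work — reducing to the semi-localized situation via \ref{polygons-under-localization}, the irreducible intersection property \ref{irreducible-intersection-property-quadratically-closed-rings}, the Chinese Remainder / exact sequence arguments in \ref{generalized-exact-sequence} and \ref{exact-sequence-for-proof-of-quadratically-closed}, and the final contradiction via \ref{special-case-polygons-quadratically-closed} — has already been carried out in the quadratically closed setting, and this lemma simply specializes that conclusion.
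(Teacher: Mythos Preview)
Your proposal is correct and matches the paper's approach exactly: the paper states this lemma as an immediate corollary of \ref{no-polygons-in-quadratically-closed}, relying only on the observation (made after \ref{def-quadratically-closed-ring}) that absolutely integrally closed rings are quadratically closed. Your justification of that inclusion via \ref{finite-product-absolutely-integrally-closed} is fine and, if anything, slightly more explicit than what the paper writes.
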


\begin{lemma}
\label{polygons-2n-adically-closed-rings}
There are no polygons in a $2n$-adically closed ring.
\end{lemma}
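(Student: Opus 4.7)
The plan is to mimic the three-step argument of \ref{no-polygons-in-quadratically-closed}, \ref{comment}, and \ref{special-case-polygons-quadratically-closed}, replacing each use of quadratic closure by its $2n$-adic analogue. I would first identify the places where quadratic closure of $R$ enters that proof: stability under localization for the semi-local reduction, the irreducible intersection property \ref{irreducible-intersection-property-quadratically-closed-rings}, and the ability to solve one monic quadratic equation inside the reduced quotient $R/(P_0 \cap \cdots \cap P_{n-1})$. All three remain available for $R$ merely $2n$-adically closed.

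First I would suppose for contradiction that $P = (n, C_i, D_j)$ is a polygon in $R$ and apply \ref{polygons-under-localization} to pass to the semi-local ring $\overline{R}$ at the generic points $Q_0, \ldots, Q_{n-1}$ of $D_0, \ldots, D_{n-1}$. By \ref{quotients-localizations-products-of-n-adically-closed-rings}, $\overline{R}$ is again $2n$-adically closed, and by \ref{polygons-under-localization}(c) the pullback polygon $\overline{P}$ satisfies $\overline{C_i} \cap \overline{C_k} = \emptyset$ whenever $k \neq i, i+1 \pmod{n}$. Next, using \ref{irreducible-intersection-property-of-2n-adically-closed-rings} in place of \ref{irreducible-intersection-property-quadratically-closed-rings}, the argument of \ref{polygons-in-quadratically-closed-rings} goes through verbatim and shows that each $\overline{C_i} \cap \overline{C_{i+1}}$ is irreducible. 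Thus I can replace each $\overline{D_j}$ by $\overline{C_j} \cap \overline{C_{j+1}}$ and reduce to the special form handled in \ref{special-case-polygons-quadratically-closed}.

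At this point the argument would copy \ref{special-case-polygons-quadratically-closed} verbatim. Letting $P_i$ denote the generic points of the $\overline{C_i}$ and setting $\alpha_{i,j} = P_i + P_j$, the relation $\alpha_{0,1} + \alpha_{1,2} = \overline{R}$ together with the Chinese Remainder Theorem produces $c \in \overline{R}$ whose class in $\overline{R}/P_1$ maps to $(1,0) \in \overline{R}/\alpha_{0,1} \times \overline{R}/\alpha_{1,2}$; then $(0, \overline{c^2 - c}, 0, \ldots, 0)$ lies in the kernel of $\phi$ and, via \ref{exact-sequence-for-proof-of-quadratically-closed}, lifts to some $\beta \in \overline{R}/(P_0 \cap \cdots \cap P_{n-1})$. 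The only step that actually requires new input is the solvability of $x^2 - x - \beta$ in this quotient, and for this I would invoke \ref{decreasing-degree}: the quotient $\overline{R}/(P_0 \cap \cdots \cap P_{n-1})$ is reduced, since $P_0 \cap \cdots \cap P_{n-1}$ is radical (it is an intersection of primes), and it is $2n$-adically closed by \ref{quotients-localizations-products-of-n-adically-closed-rings}; since $2 \mid 2n$, \ref{decreasing-degree} then forces it to be quadratically closed. Hence $x^2 - x - \beta$ has a root $\gamma$, and the coordinate-by-coordinate contradiction that closes out \ref{special-case-polygons-quadratically-closed} transfers unchanged.

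The main obstacle I anticipate is conceptual rather than computational: one must recognize that quadratic closure of $R$ was essentially used at only one step of \ref{special-case-polygons-quadratically-closed}, namely on the reduced quotient $R/\bigcap P_i$, and that \ref{decreasing-degree} makes $2n$-adic closure of $R$ sufficient precisely because radical quotients are automatically reduced. Once that observation is in hand, no modification of the polygon-specific calculations is required.
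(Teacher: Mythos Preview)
Your proposal is correct and follows the paper's overall structure: pass to the semi-local ring, invoke \ref{irreducible-intersection-property-of-2n-adically-closed-rings} to make the vertices irreducible, and then run the contradiction argument of \ref{special-case-polygons-quadratically-closed}. The only point of divergence is how you solve the quadratic $x^2 - x - \beta$ in the quotient. You appeal to \ref{decreasing-degree}, using that $R/\bigcap P_i$ is reduced (as an intersection of primes) and $2n$-adically closed, hence $2$-adically closed. The paper instead takes $(x^2 - x - \beta)^n$, finds a root $\gamma$ of this degree-$2n$ polynomial directly, and then uses reducedness of the product $\prod R/P_i$ to strip the $n$-th power after applying $\varphi$. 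These are two packagings of the same idea; indeed the proof of \ref{decreasing-degree} is exactly this trick. Your route is arguably cleaner since it reuses an existing lemma rather than reproving it in place, while the paper's version avoids needing to observe separately that $R/\bigcap P_i$ is reduced.
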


\begin{proof}
It is clear that \ref{polygons-in-quadratically-closed-rings} and \ref{comment} are true for quadratically closed rings replaced by $2n$-adically closed rings. Hence, we can reduce to the situation in \ref{special-case-polygons-quadratically-closed}. The proof is then the same as in \ref{special-case-polygons-quadratically-closed}, except that now we choose our polynomial to be $(x^2 - x - \beta)^n$. If $\gamma$ is a root of $(x^2 - x - \beta)^n$ (such a root is guaranteed to exist, because $R/P_0 \cap \cdots P_{n-1}$ is $2n$-adically closed), then $(\varphi(\gamma)^2 - \varphi(\gamma) - \varphi(\beta))^n = 0$ in $\prod_{0\leq i \leq n-1} R/P_i$, and since the latter ring is reduced, we get $\varphi(\gamma)^2 - \varphi(\gamma) - \varphi(\beta) = \varphi(\gamma)^2 - \varphi(\gamma) - (0, \overline{c^2-c}, 0, \cdots, 0) = 0$. From there on, the proof is verbatim that of \ref{special-case-polygons-quadratically-closed}.
\end{proof}

\section{Acknowledgements}
\label{ack}
\noindent
I would like to thank Prof. Aise Johan de Jong, for his supervision of this paper. He introduced me to the fascinating world of schemes, and guided my studies in algebraic geometry. This thesis would not have been possible without his suggestions, and his ideas are present throughout this paper. He was always very jovial and approachable, and was never annoyed by my many questions. It was he who suggested that I read \cite{Art} and \cite{Hoch-Hun}, and thanks to this suggestion, I have had the opportunity to learn about constructions in commutative algebra, that might have otherwise escaped my attention. I would also like to mention how much more accessible commutative algebra and algebraic geometry have become, thanks to \cite{stacks-project} and \cite{foa}. Last, but not least, I would also like to thank every professor in the Mathematics Department of Columbia University, who taught and guided me throughout my undergraduate career, and Mary Young, who made life easier for me in this department.

\bibliographystyle{amsplain.bst}
\bibliography{Bibli(1)}

\providecommand{\bysame}{\leavevmode\hbox to3em{\hrulefill}\thinspace}
\providecommand{\MR}{\relax\ifhmode\unskip\space\fi MR }
\providecommand{\MRhref}[2]{%
  \href{http://www.ams.org/mathscinet-getitem?mr=#1}{#2}
}
\providecommand{\href}[2]{#2}
\begin{thebibliography}{1}

\bibitem{Art}
M.~Artin, \emph{On the joins of hensel rings}, Advances In Mathematics
  \textbf{7} (1971), no.~3, 282--296. \MR{0289501 (44 \#6690)}

\bibitem{Hoch-Hun}
M.~Hochster and C.~Huneke, \emph{Infinite integral extensions and big cohen-
  macaulay algebras}, Annals of Mathematics \textbf{135} (1992), no.~1, 53--89.

\bibitem{Liu}
Qing Liu, \emph{Algebraic geometry and arithmetic curves}, Oxford University
  Press, New York, 2006, Graduate Texts in Mathematics, No. 52. \MR{1917232
  (2003g:14001)}

\bibitem{stacks-project}
The {Stacks Project Authors}, \emph{{Stacks Project}},
  \url{http://stacks.math.columbia.edu}.

\bibitem{foa}
Ravi Vakil, \emph{{Foundations of Algebraic Geometry}},
  \url{http://math.stanford.edu/~vakil/216blog/}.

\end{thebibliography}

\end{document}